\newcommand{\currentcaption}{}
\newcommand{\currentname}{}
\newcommand{\nc}{\newcommand}
\newcommand{\patch}{\textsf{P}}
\newcommand{\wnlm}{\omega}
\newcommand*{\wpatch}{%
 \protect \@ifnextchar\bgroup\w@patch\w@@patch
}
\newcommand*{\w@patch}[1]{
  \ensuremath{\boldsymbol S \left( #1 \right)}%
}
\newcommand*{\w@@patch}{
  \ensuremath{\boldsymbol S}%
}
\newcommand*{\wpatchoppose}{%
 \protect \@ifnextchar\bgroup\my@patch\my@@patch
}
\newcommand*{\my@patch}[1]{
  \ensuremath{\check{(\boldsymbol S}\left( #1 \right)}%
}
\newcommand*{\my@@patch}{
  \ensuremath{\check{\boldsymbol S}}%
}
\newcommand{\R}{\mathbb{R}}
\newcommand{\N}{\mathbb{N}}
\newcommand{\Z}{\mathbb{Z}}
\nc{\ind}{\mathds{1}}
\renewcommand{\P}{\mathbb{P}}
\newcommand{\E}{\mathbb{E}}
\DeclareMathOperator{\Var}{Var}
\newcommand{\1}{\mathds{1}}
\newtheorem{thm}{Theorem}[section]
\newtheorem{prp}{Proposition}[section]
\newtheorem{lem}{Lemma}[section]
\newcommand{\beq}{\begin{equation}}
\newcommand{\eeq}{\end{equation}}
\newcommand{\beqn}{\begin{eqnarray*}}
\newcommand{\eeqn}{\end{eqnarray*}}
\newcommand{\bitem}{\begin{itemize}}
\newcommand{\eitem}{\end{itemize}}
\newcommand{\benum}{\begin{enumerate}}
\newcommand{\eenum}{\end{enumerate}}
\newcommand{\bmult}{\begin{multline*}}
\newcommand{\emult}{\end{multline*}}
\newcommand{\bcenter}{\begin{center}}
\newcommand{\ecenter}{\end{center}}
\newcommand{\thmref}[1]{Theorem~\ref{thm:#1}}
\newcommand{\prpref}[1]{Proposition~\ref{prp:#1}}
\newcommand{\lemref}[1]{Lemma~\ref{lem:#1}}
\newcommand{\secref}[1]{Section~\ref{sec:#1}}
\newcommand{\figref}[1]{Figure~\ref{fig:#1}}
\nc{\argmin}{\mathop{\mathrm{arg\,min}}}
\nc{\argmax}{\mathop{\mathrm{arg\,max}}}
\DeclareMathOperator{\dist}{dist}
\newcommand{\cF}{\mathcal{F}}
\newcommand{\cH}{\mathcal{H}}
\newcommand{\cN}{\mathcal{N}}
\newcommand{\cR}{\mathcal{R}}
\newcommand{\cX}{\mathcal{X}}
\newcommand{\bH}{\mathbf{H}}
\newcommand{\bI}{\mathbf{I}}
\newcommand{\bM}{\mathbf{M}}
\newcommand{\bU}{\mathbf{U}}
\newcommand{\bX}{\mathbf{X}}
\newcommand{\bZ}{\mathbf{Z}}
\newcommand{\ba}{\mathbf{a}}
\newcommand{\be}{\mathbf{e}}
\newcommand{\bbf}{\mathbf{f}}
\newcommand{\bg}{\mathbf{g}}
\newcommand{\bp}{\mathbf{p}}
\newcommand{\bu}{\mathbf{u}}
\newcommand{\bv}{\mathbf{v}}
\newcommand{\by}{\mathbf{y}}
\newcommand{\beps}{{\boldsymbol\eps}}
\newcommand{\bbN}{\mathbb{N}}
\newcommand{\bbR}{\mathbb{R}}
\newcommand{\bbZ}{\mathbb{Z}}
\newcommand{\pr}[1]{\mathbb{P}\left(#1\right)}
\newcommand{\eps}{\varepsilon}
\newcommand{\Vol}{{\rm Vol}}
\definecolor{darkgreen}{rgb}{.0,.638,.035}
\definecolor{darkred}{rgb}{.638,.0,.035}
\definecolor{purple}{rgb}{0.4,.1,.9}
\newcommand{\mse}{{\rm MSE}}
\newcommand{\ie}{{\em i.e.,~}}
\newcommand{\eg}{{\em e.g.,~}}
\newcommand{\lcf}{{\em cf.\ }}
\newcommand{\ave}{{\rm Ave}}
\newcommand{\one}[1]{\mathds{1}_{\{ #1 \}}}
\newcommand{\fin}{f_\Omega}
\newcommand{\fout}{f_{\Omega^c}}
\newcommand{\afloor}{\lfloor \alpha \rfloor}
\newcommand{\NLMm}{NLM-average}
\newcommand{\cf}{C_0}
\newcommand{\wh}[1]{\widehat{#1}}
\renewcommand{\mp}{{m_\patch}}
\newcommand{\bypi}{{\by_{\patch_i}}}
\newcommand{\bypj}{{\by_{\patch_j}}}
\newcommand{\bepi}{{\beps_{\patch_i}}}
\newcommand{\bepj}{{\beps_{\patch_j}}}
\newcommand{\bbfpi}{{\bbf_{\patch_i}}}
\newcommand{\bbfpj}{{\bbf_{\patch_j}}}
\newcommand{\ypbark}{{\overline{y}_{\patch_k}}}
\newcommand{\ypbari}{{\overline{y}_{\patch_i}}}
\newcommand{\fpbari}{{\overline{f}_{\patch_i}}}
\newcommand{\epbari}{{\overline{\eps}_{\patch_i}}}
\newcommand{\ypbarj}{{\overline{y}_{\patch_j}}}
\newcommand{\fpbarj}{{\overline{f}_{\patch_j}}}
\newcommand{\epbarj}{{\overline{\eps}_{\patch_j}}}
\title{ Oracle inequalities and minimax rates for non-local means \\
  and related adaptive kernel-based methods \thanks{The authors
    started working on the paper at the Institute for Mathematics and
    its Applications and gratefully acknowledge support from DARPA
    grant no.\ FA8650-11-1-7150, AFOSR award no.\ FA9550-10-1-0390,
    and NSF awards no.\ CCF-06-43947.}  }
\author{Ery Arias-Castro\footnotemark[2]\
\and Joseph Salmon\footnotemark[3]\ 
\and Rebecca Willett\footnotemark[3] 
}
\begin{document}

\maketitle

\renewcommand{\thefootnote}{\fnsymbol{footnote}}

\footnotetext[2]{Department of Mathematics; University of California,
  San Diego, CA, USA.
}
\footnotetext[3]{Department of Electrical and Computer Engineering; Duke University,
  Durham, NC, USA.}

\renewcommand{\thefootnote}{\arabic{footnote}}

\newcommand{\slugmaster}{\slugger{MMedia}{xxxx}{xx}{x}{x--x}}

\begin{abstract}
  This paper describes a novel theoretical characterization of the
  performance of non-local means (NLM) for noise removal.  NLM has
  proven effective in a variety of empirical studies, but little is
  understood fundamentally about how it performs relative to classical
  methods based on wavelets or how its parameters 
  should be chosen.  For cartoon images and images which may contain
  thin features and regular textures, the error decay rates of NLM are
  derived and compared with those of linear filtering, oracle
  estimators, 
  Yaroslavsky's filter
  and wavelet thresholding estimators.  The trade-off between global
  and local search for matching patches is examined, and the bias
  reduction associated with the local polynomial regression version of
  NLM is analyzed. The theoretical results are validated via
  simulations for 2D images corrupted by additive white Gaussian
  noise.
\end{abstract}

\begin{keywords}
Non-local means (NLM), Yaroslavsky's filter, kernel smoothing, patch-based methods, local polynomial regression, oracle bounds, minimax bounds, cartoon model, textures.
\end{keywords}

\pagestyle{myheadings}
\thispagestyle{plain}
\markboth{E. Arias-Castro {\em et al.}}{Minimax bounds for non-local means}

\section{Introduction}

The classical problem of image noise removal has drawn significant
attention during the past few decades from the image processing,
computational harmonic analysis, nonlinear approximation, and
statistics communities.  In recent years there has been a resurgence
of interest in kernel-based methods, including the ubiquitous
non-local means (NLM) algorithm \cite{Buades_Coll_Morel05}, due to
their practical efficacy on broad collections of ``natural'' images.
While there is a wealth of theoretical analysis associated with
nonlinear thresholding estimators based on wavelets and related sparse
multiscale representations of images
\cite{Donoho_Johnstone94,Donoho_Johnstone_Kerkyacharian_Picard95,Mallat09,Starck_Candes_Donoho02}
or on diffusion models
\cite{Szlam_Maggioni_Coifman08,Singer_Shkolniksy_Nadler09} and partial
differential equations \cite{Perona_Malik90,Alvarez_Mazorra94},
performance guarantees for NLM are lacking and this paper aims at
providing some results in this direction.

In this paper, we explore the theoretical underpinnings of adaptive
kernel-based image estimation and derive bounds on the mean squared
error as a function of the number of pixels observed and features of
the underlying image.  The denoising methods we consider are based
on estimating each pixel value with a weighted sum of the surrounding
pixels.  Depending on how the weights in this average are selected,
this corresponds to classical linear filters
\cite{Nadaraya64,Watson64}, Yaroslavsky's filter (YF)
\cite{Yaroslavsky85}, the Sigma filter \cite{Lee83}, or the bilateral
filter \cite{Tomasi_Manduchi98}.  It also includes variable-bandwidth
kernel estimators \cite{Lepski_Mammen_Spokoiny97}, referred to
as Lepski's method by statisticians and as the Intersection of
Confidence Intervals (ICI) rule
\cite{Katkovnik99,Katkovnik_Egiazarian_Astola02} in signal processing. 
Other variants for a local choice of the kernel
include \cite{Spira_Kimmel05, Takeda_Farsiu_Milanfar07}
We refer to \cite{Hastie_Tibshirani_Friedman09,Salmon,Milanfar12} for more
insights on a unifying framework for averaging filters.

As none of these methods have been explicitly designed to deal
with textured regions, many authors, inspired by work on texture
synthesis \cite{Efros_Leung99} and inpainting
\cite{Criminisi_Perez_Toyama04}, have proposed to introduce patches
(small sub-images) to take advantage of natural image redundancy,
especially in textured regions. NLM \cite{Buades_Coll_Morel05} and
UINTA \cite{Awate_Whitaker06} algorithms are typical examples of this
approach, as is their extension using Lepski's method
\cite{Kervrann_Boulanger06}.  Those algorithms rely on averaging
similar pixels, where the similarity is measured through patches
centered on the pixel of interest.  Some more elaborate methods have
tried to remove artifacts appearing in regions with low
redundancy \cite{Salmon_Strozecki12} --- a phenomenon also known as
the \textit{rare patch effect} \cite{Duval_Aujol_Gousseau11} --- for
instance by choosing NLM parameters automatically and locally. A
common tool used for this local adaptivity is the Stein Unbiased Risk
Estimate (SURE)
\cite{Duval_Aujol_Gousseau11,VanDeVille_Kocher09,VanDeVille_Kocher11}.

Most current state-of-the-art methods for denoising take advantage of
the patch framework
\cite{Mairal_Bach_Ponce_Sapiro_Zisserman09,Dabov_Foi_Katkovnik_Egiazarian07,Dabov_Foi_Katkovnik_Egiazarian09}.
The interested reader could get a clear picture of practical performance of those
recent methods,
in the review paper by Katkovnik \textit{et al}. \cite{Katkovnik_Foi_Egiazarian_Astola10}.
Despite the strong empirical performance of these methods, few
performance guarantees exist: bounds with information theory flavor
are derived in \cite{Weissman_Ordentlich_Seroussi_Verdu_Weinberger05}
for a simple version of NLM; a consistency result relying on
beta-mixing assumptions on the image and on the noise (both modeled as
random variables) is obtained in \cite{Buades,Buades_Coll_Morel05};
\cite{Singer_Shkolniksy_Nadler09} proposes a graph-diffusion
interpretation for a simple image model; a bias/variance analysis
aiming at locally choosing NLM parameters is carried out in
\cite{Duval_Aujol_Gousseau11};
\cite{Levin_Nadler11,Chatterjee_Milanfar11} obtain Cramer-Rao type
efficiency results.  While finishing this paper, we became aware of
two related papers by Maleki, Narayan and Baraniuk, addressing optimal performance in the context of non-parametric minimax estimation
\cite{Maleki_Narayan_Baraniuk11,Maleki_Narayan_Baraniuk11b}.
\cite{Maleki_Narayan_Baraniuk11} evaluates the performance of NLM for
the piecewise constant horizon model \cite{Korostelev_Tsybakov93},
while \cite{Maleki_Narayan_Baraniuk11b} considers an anisotropic
variant of NLM for the same image class.  The latter shares several
features with earlier work on anisotropic NLM
\cite{Deledalle_Duval_Salmon11b,Deledalle_Duval_Salmon11}.  Our work
is most closely related to \cite{Maleki_Narayan_Baraniuk11},
addressing the same challenge of quantifying the performance of NLM
and related methods, and at the same time contains several novel
contributions.
While the paper was under review, we learned about an older paper of Tsybakov \cite{Tsybakov89}.  This paper proposes and analyzes a patch-based method that compares the medians over patches.  The paper also derives a minimax lower bound for the cartoon model we consider.  
We comment in more detail on the work of Maleki \cite{Maleki_Narayan_Baraniuk11} et al and the work of Tsybakov \cite{Tsybakov89} in \secref{discussion}.

\subsection{Our contribution}

We derive theoretical performance bounds for the linear filter, oracle 
variable-bandwidth kernel methods, Yaroslavsky's filter and NLM 
--- both the original \cite{Buades_Coll_Morel05} and a fast
patch-mean based variant \cite{Mahmoudi_Sapiro05} --- in the
classical ``cartoon'' model in which an image consists of smooth
surfaces separated by a smooth discontinuity, a popular model in
statistics \cite{Korostelev_Tsybakov93}.  Our results are for the
local polynomial versions of these methods.  (The systematic bias
associated with NLM near discontinuities --- and boundaries
--- is shown to disappear when using a local polynomial regression.)
We also consider nonstandard image classes, one modeling images with
thin features and another one modeling regular textures.  The latter
is particularly significant because it highlights some of the key
advantages of patch-based methods over, say, wavelet thresholding
estimators. Previous insights into the performance of NLM-like methods
on textures are empirical at best; we are not aware of any theory in
this vein.  Our benchmarks are two oracle inequalities, though many of
our theoretical results can be compared directly with similar
classical results in the wavelet literature and known minimax lower
bounds on mean squared error (MSE)
\cite{Donoho_Johnstone_Kerkyacharian_Picard95,Korostelev_Tsybakov93}.

The cartoon model for images has been a benchmark for image denoising methods, 
at least since the work of Korostelev and Tsybakov, condensed in \cite{Korostelev_Tsybakov93}.  
This model is relevant when comparing denoising methods on texture-less images.  
The other models are novel and tailored to situations where the image exhibits some thin features --- 
like the legs of the Cameraman's tripod --- and regular textures --- like the patterns in Barbara's blouse.  
Though these models do not reflect the complexities of real images, we do gain some qualitative insights.  
First, we learn that variable bandwidth kernel methods are fundamentally limited by the bias near discontinuities.  
Yaroslavsky's filter is found to be near-optimal when the noise level is sufficiently 
low that the different regions in the cartoon image do not mix when noise is added; and when this is not the case, 
the method becomes useless.  In non-local means, the patch size should be chosen just sufficiently 
large that nearby patches from different regions look different 
(in the average version of the NLM, this can be made very precise).  
The search window should be chosen like a standard kernel bandwidth.  
We quickly argue that not localizing these methods may lead to very poor performance, in agreement with \cite{Salmon10,Zontak_Irani11}. 
Also, while the NLM average and regular NLM perform similarly on cartoon images, the latter is superior when textures are present.

\subsection{Organization of the paper} 

In \secref{setting} we describe the mathematical framework.  In
\secref{background} we introduce the methods that we analyze in the
sequel.  In \secref{results} we state performance guarantees in the
cartoon model for these methods, and in
\secref{thin-pattern} we do the same in the context of the thin
feature and regular pattern models.  In \secref{experiments} we
perform some numerical experiments carefully illustrating our
theoretical findings.  In \secref{discussion} we contrast our
contribution to that of Maleki {\em et al.}~\cite{Maleki_Narayan_Baraniuk11}
and discuss extensions.  The proofs are gathered in \secref{proofs},
which includes general results on local polynomial regression which
may be of independent interest.

\subsection{Notation}

We use standard notation.  For non-negative sequences $(a_n)$ and
$(b_n)$, $a_n = O(b_n)$ (same as $a_n \preceq b_n$) if the sequence
$|a_n/b_n|$ is bounded from above; $a_n \asymp b_n$ if $a_n=O(b_n)$
and $b_n=O(a_n)$; $a_n = o(b_n)$ if $a_n/b_n \to 0$ as $n \to \infty$.
For real numbers $a$ and $b$, $a\vee b = \max(a,b)$ while $a\wedge b =
\min(a,b)$.  For a Lebesgue-measurable subset $A \subset \bbR^d$,
$\Vol(A)$ denotes its Lebesgue measure.  For any $x\in \R^d$,
we define its Euclidean and sup norm as
\[
\|x\|_2=\left(\sum_{i=1}^d x_i^2 \right)^{1/2}, \qquad \|x\|:=\|x\|_\infty=\max_{i=1}^d |x_i|.
\]
We use the notation $B(0,1)$ (resp. $\overline{B(0,1)}$) to denote the open (resp. closed) unit ball for the supnorm.
For $\eta>0$, we define the $\eta$-neighborhood (for the norm
$\|\cdot\|$) of a set $A \subseteq \R^d$ as
\[
B(A, \eta) = \{x \in \R^d: \dist(x, A) < \eta\}.
\]
For a discrete set $A$, we denote its cardinality by either $|A|$ or
$\# A$.  For a set $A \subset \bbR^d$, $\one{A}$ is the indicator
function of $A$, while for a discrete subset $B \subset \{1, \dots,
m\}$, ${\bf 1}_B$ denotes the vector with entries indexed by $B$ equal
to one, and all others equal to zero.  Additional notation is
introduced in the text as needed.

\section{Function estimation in additive white noise}
\label{sec:setting}

We cast the problem of image denoising as a non-parametric regression
problem in the presence of white noise, a standard model
in statistics~\cite{Korostelev_Tsybakov93}.  We consider the general
$d$-dimensional problem, and use the term ``image'' to denote any
discretized signal on the $d$-dimensional square lattice, with
important cases when $1\leq d\leq 4$.  Though patch-based
methods were designed for 2D images, 
we consider a general dimension,
as the same techniques may apply in color, spectral, 3D and
4D imaging~\cite{Zewail_Thomas09}.

We observe noisy samples $\{y_i \in \R: i \in I_n^d\}$ (where $I_n :=
\{1, \dots, n\}$) of the target function $f : [0,1]^d \to [0,1]$ at
the design points $\{x_i \in \R^d: i \in I_n^d\}$
corrupted
by an additive noise $\{\eps_i \in \R: i \in I_n^d\}$, as follows
\beq \label{model} y_i = f(x_i) + \eps_i, \quad i \in I_n^d.  \eeq

For now, we only assume that the noise $\{\eps_i : i \in I_n^d\}$ are
uncorrelated with mean zero and variance $\sigma^2$, though some
results will require some tail bounds.  Also, for
concreteness, 
we focus on a standard model in image processing where the sample
points are on the square lattice, specifically, $x_i = ((i_1-1/2)/n,
\dots, (i_d-1/2)/n)$ when $i = (i_1, \dots, i_d)$.  Leaving $n$
implicit, define vectors $\by = (y_i: i \in I_n^d)$, $\bbf = (f_i: i
\in I_n^d)$ with $f_i := f(x_i)$ and $\beps = (\eps_i: i \in I_n^d)$.
The vector model can thus be written
\begin{equation}
 \by=\bbf+\beps \, .
\end{equation}

We focus on estimating a function $f$ on the grid, namely our goal is to
estimate the vector $\bbf$ and we measure the performance of an estimator
$\wh{\bbf}$ in terms of (MSE):
\[
\mse_f(\wh{\bbf}) = \frac{\E \|\wh{\bbf} - \bbf\|_2^2}{n^d} =
\frac1{n^d} \sum_{i \in I_n^d} \E (\wh{f}_i - f_i)^2 \, ,
\]
where the expectation $\E$ is with respect to the probability
  measure associated with the noise.

Although our analysis
may be generalized to other norms, mean squared error is handy because
of the point-wise (squared) bias and variance decomposition:
\beq \label{bias-var} \E (\wh{f}_i - f_i)^2 = \underbrace{(\E \wh{f}_i
  - f_i)^2}_{\mbox{\footnotesize{Squared Bias}}} + \underbrace{\E
  \big(\E(\wh{f}_i) -
  \wh{f}_i\big)^2}_{\mbox{\footnotesize{Variance}}} \ , \quad \forall
i \in I_n^d.  \eeq This leads for the vector estimate to the following
decomposition:
\[
\E \|\wh{\bbf} - \bbf\|_2^2 = \|\E(\wh{\bbf}) - \bbf\|_2^2 + \E
\|\E(\wh{\bbf}) - \wh{\bbf}\|_2^2 \, .
\]

To recover the function $f$ only through a finite number of
measurements, it is customary to require that the targeted function
belongs to a class $\cF$ of structured functions such as smooth,
piecewise smooth, or periodic textured images.  In this context, the
minimax risk over the function class $\cF$ is defined as
\[
\cR_n^*(\cF) = \inf_{\wh{\bbf}} \sup_{f \in \cF} \mse_f(\wh{\bbf}),
\]
where the infimum is over all the measurable function with
respect to the observations.  We say that an estimator is
(rate-)optimal for the class $\cF$ if its worst-case MSE over $\cF$ is
comparable to the minimax risk, \ie (assuming implicitly that $n$
becomes large)
\[
\cR_n(\wh{\bbf}, \cF) := \sup_{f \in \cF} \mse_f(\wh{\bbf}) = O(\cR_n^*(\cF)).
\]

\subsection{Cartoon images}
We are particularly interested in situations where the function $f$
has discontinuities: this is typical of images, mainly because of
occlusions occurring in natural scenes. We say that $f$ is a ``cartoon
image" if it is a piecewise smooth image with discontinuities along
smooth hypersurfaces.  This model spurred the greatest part of the
research in image processing and is very common when no texture is
present~\cite{Korostelev_Tsybakov93}.  For simplicity, we consider
that $f$ is made of two pieces with each piece being H\"older
smooth. Note that all our results apply to the more general case where
$f$ is made of more than two pieces.  For a function $g : \R^d \to \R$
and $s = (s_1, \dots, s_d)\in \N^d$, we denote the $s$-derivative of
$g$ at $x \in \R^d$ 
by
\[
g^{(s)}(x) = \frac{\partial^{|s|}}{\partial_{x_1}^{s_1} \cdots \partial_{x_d}^{s_d}} g(x),
\]
where $|s| := s_1 + \cdots + s_d$.
\begin{definition}[H\"older function class] \label{def:holder} For
  $\alpha, \cf > 0$, we define $\cH_d(\alpha, \cf)$ as the H\"older
  class of functions $g : [0,1]^d \to [0,1]$ that are $\afloor$ times
  differentiable ($\afloor$ is the largest integer strictly less than
  $\alpha$) and satisfy
\begin{align}
  \forall x \in [0,1]^d, \ \forall s \in \bbN^d, 1 \leq |s| \leq \afloor: & \quad |g^{(s)}(x)| \leq \cf; \label{eq:deriv_bound} \\
  \forall (x, x') \in [0,1]^d, \ \forall s \in \bbN^d, |s| = \afloor:
  & \quad |g^{(s)}(x) - g^{(s)}(x')| \leq \cf \|x
  -x'\|_{\infty}^{\alpha-\afloor}. \label{eq:deriv_max}
\end{align}
\end{definition}
The main feature of H\"older functions of order $\alpha$ is that they
are well-approximated locally by a polynomial (in fact, their Taylor
expansion) of degree $\afloor$, \lcf \lemref{taylor}.

\begin{definition}[Cartoon function class] \label{def:cartoon} For
  $\alpha, \cf > 0$, let $\cF^{\rm cartoon}(\alpha, \cf)$ denote the
  set of functions of the form 
\beq \label{eq:F} 
f(x) = \one{x \in \Omega} \, \fin(x) + \one{x \in \Omega^c} \, \fout(x), 
\eeq 
where
  $\fin, \fout \in \cH_d(\alpha, \cf)$, with jump (or discontinuity
  gap) \beq \label{diff} \mu(f) := \inf_{x \in \partial \Omega}
  |\fin(x) - \fout(x)| \geq 1/\cf, \eeq and $\Omega \subset (0,1)^d$
  is a bi-Lipschitz image of the (Euclidean) unit ball $B(0,1)$,
  specifically, $\Omega = \phi( B(0,1) )$, where $\phi : \R^d \to
  \R^d$ is injective with $\phi$ and $\phi^{-1}$ both Lipschitz with
  constant $\cf$ (\ie $C_0$-Lipschitz) with respect to the supnorm.
 We refer to $\fin$ as
  the foreground and to $\fout$ as the background. Moreover
    $\partial \Omega$ represents the (topological) boundary of
    $\Omega$.
\end{definition}

The condition \eqref{diff} is a lower bound on the minimum ``jump''t
along the discontinuity $\partial \Omega$.  We require that $\phi$ is
bi-Lipschitz to ensure that the set $\Omega$ is sufficiently smooth
and does not have a serious bottleneck, which could potentially
mislead the methods discussed here.

We define the jump-to-noise ratio (JNR) for a target function $f$
with jump $\mu(f)$, and noise standard deviation $\sigma$, as being
the quantity
\begin{equation} {\rm JNR} = \frac{\mu(f)}{\sigma} \, .
\end{equation}
We assume throughout that $\mu \asymp 1$, so that our bounds (which
scale with $\sigma$) reflect performance also as a function of JNR.
In the cartoon model, we focus on the case where the noiseless image
is at least piecewise Lipschitz, that is, $\alpha \geq 1$.  Note that
our results apply to the case where $\alpha > 1/2$, and that simple
linear filtering is essentially optimal when $\alpha \leq 1/2$. The
setting is illustrated in \figref{original_noisy_images}(a).

\begin{figure}[hbt]
\centering
\subfigure{\includegraphics[width=0.118\linewidth]{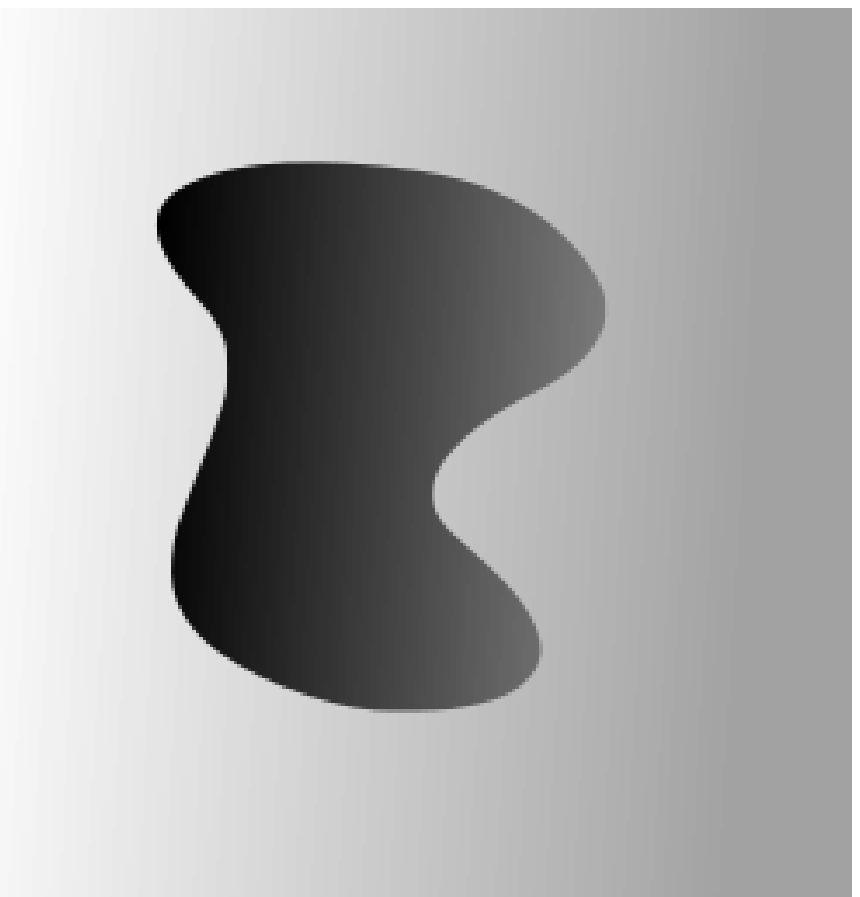}} 
\subfigure{\includegraphics[width=0.118\linewidth]{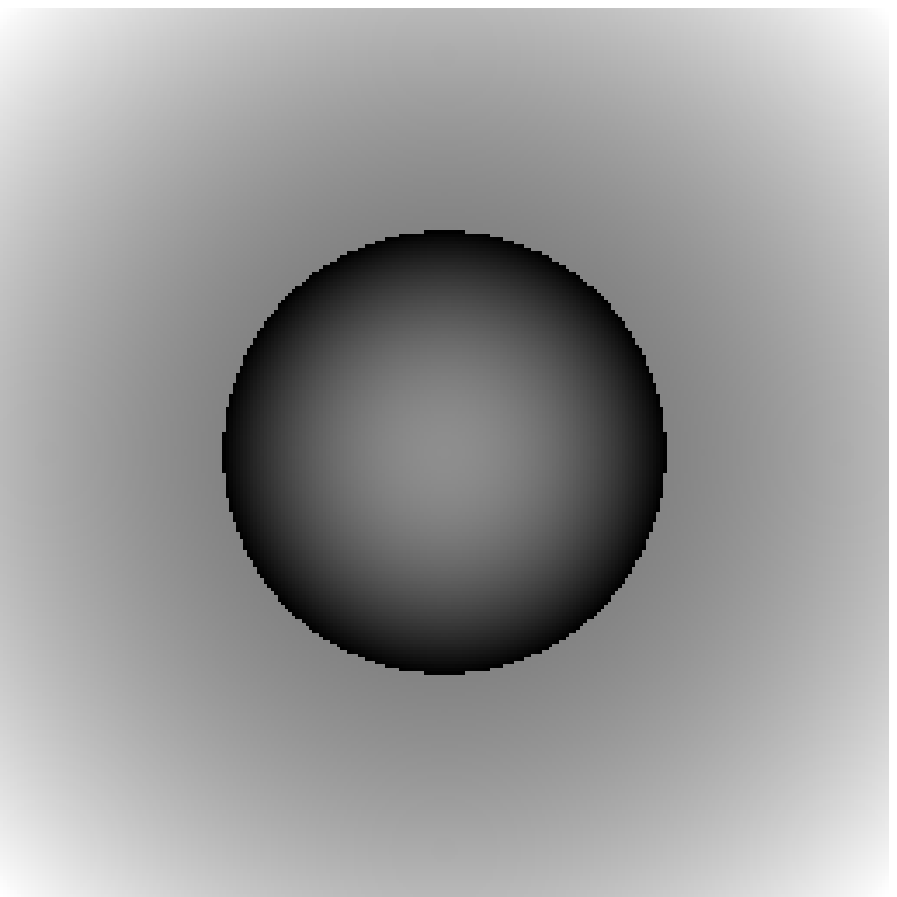}} 
\subfigure{\includegraphics[width=0.118\linewidth]{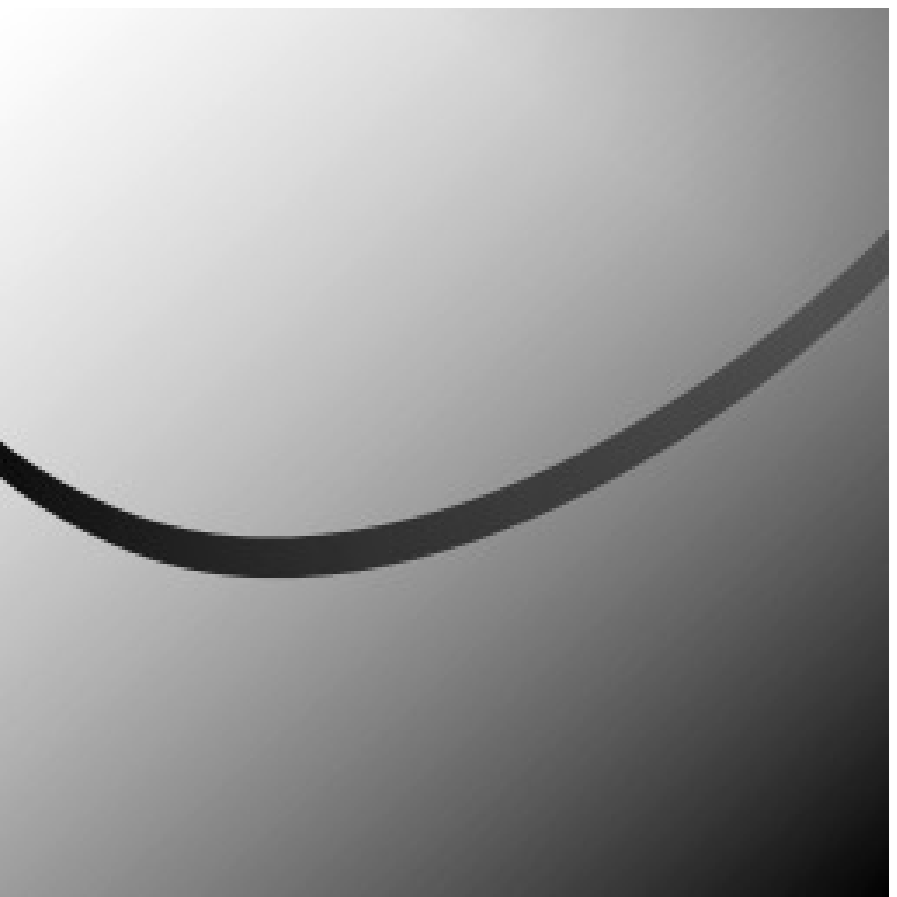}} 
\subfigure{\includegraphics[width=0.118\linewidth]{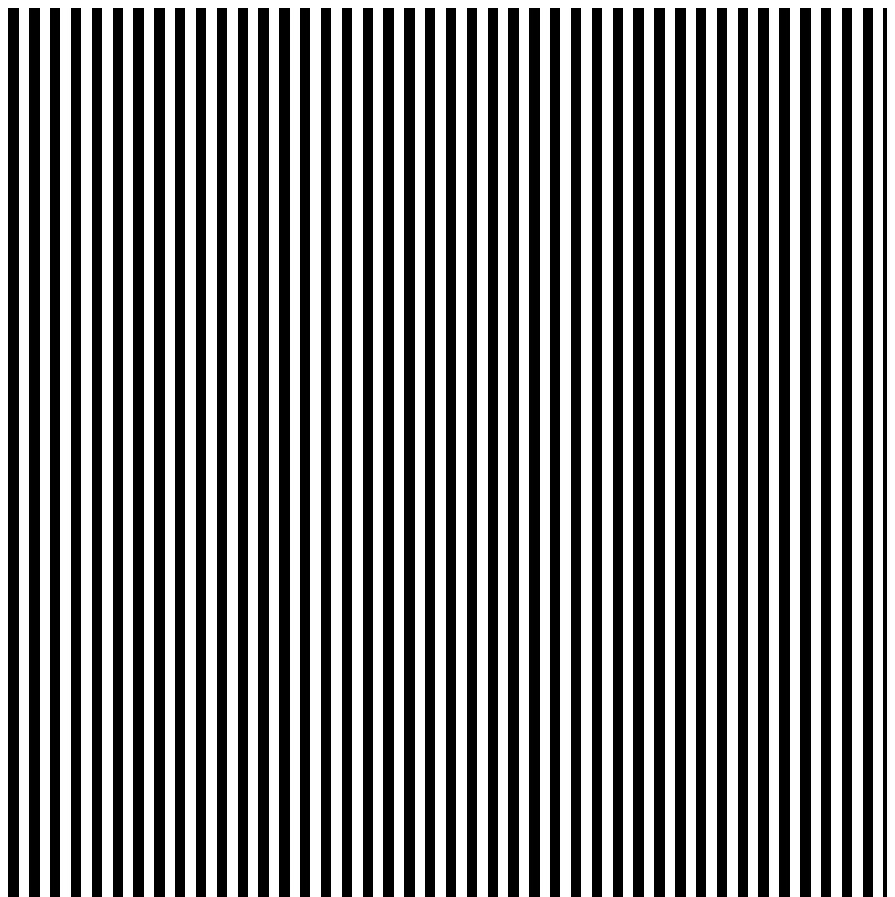}}
\subfigure{\includegraphics[width=0.118\linewidth]{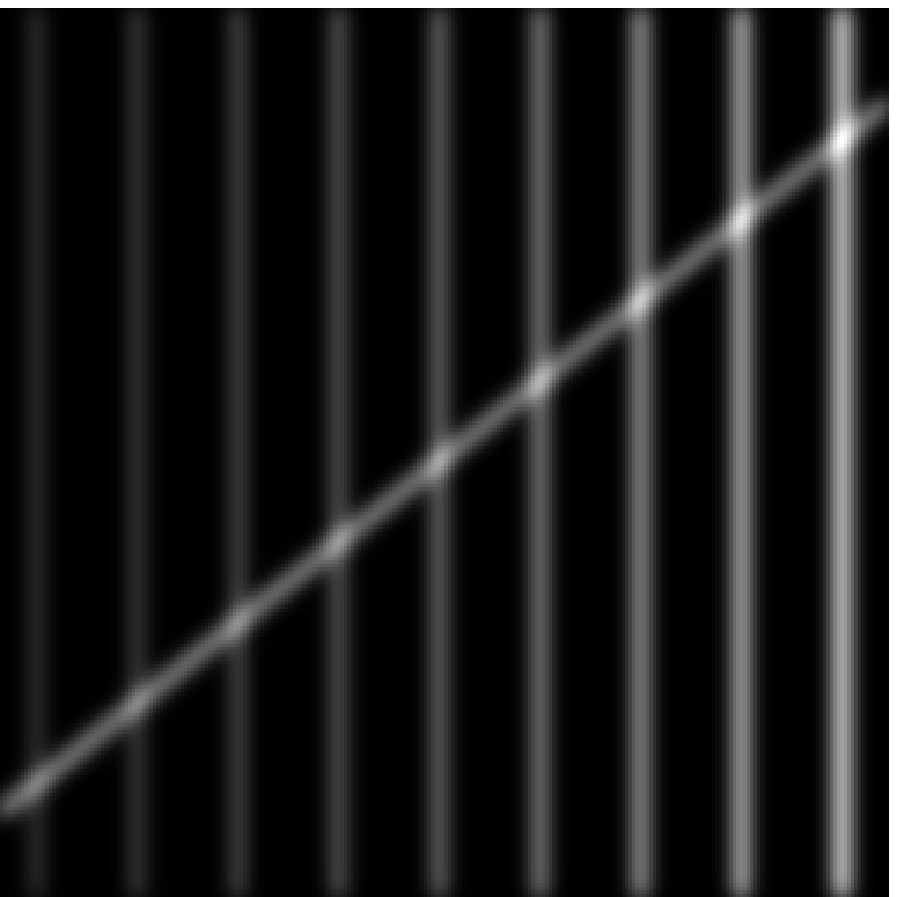}} 
\subfigure{\includegraphics[width=0.118\linewidth]{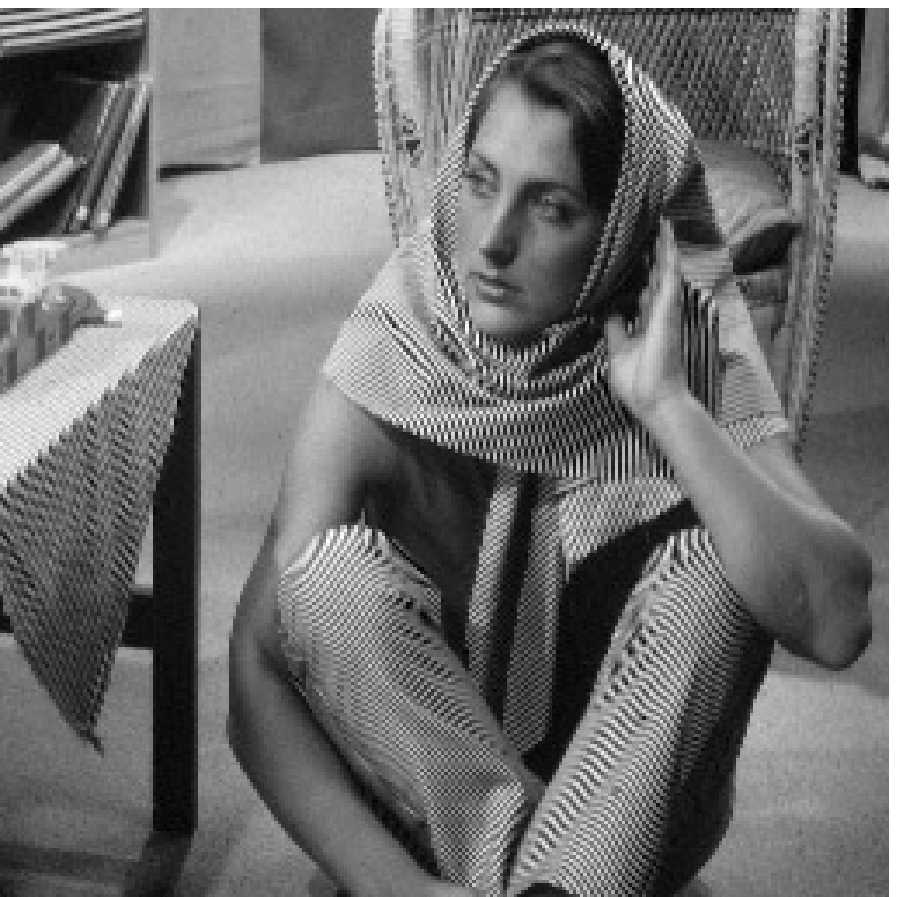}}
\subfigure{\includegraphics[width=0.118\linewidth]{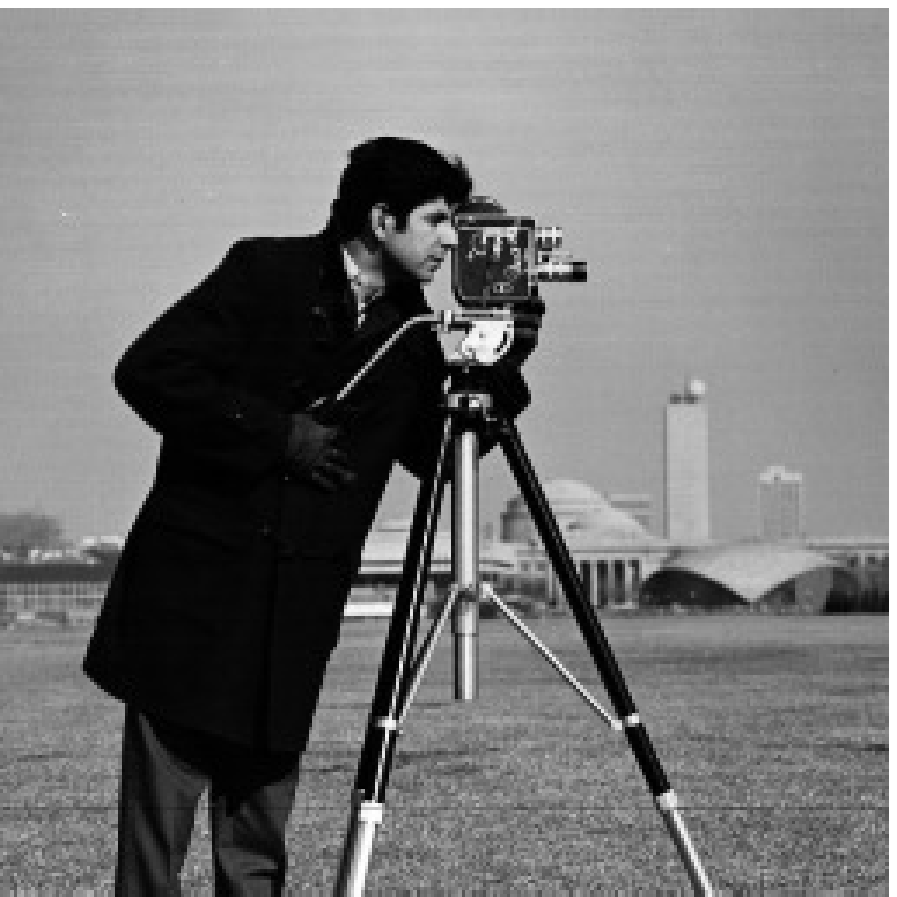}}\vspace{-0.1cm}
\setcounter{subfigure}{0}
\subfigure[Blob]{\includegraphics[width=0.118\linewidth]{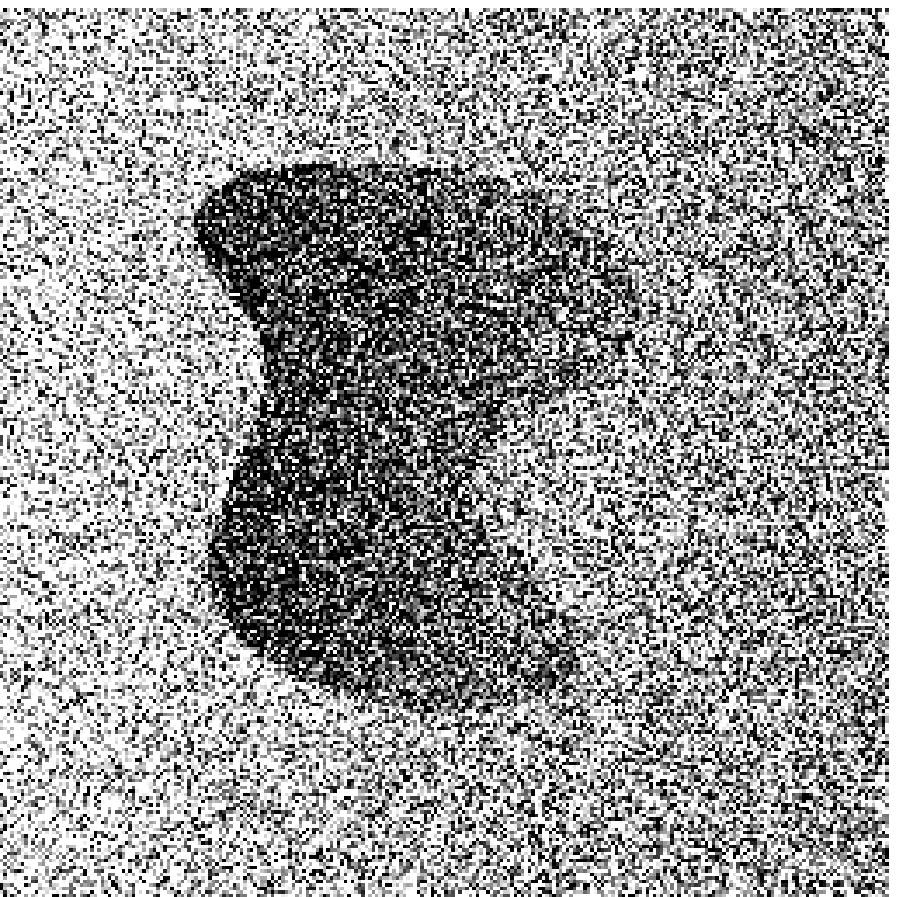}} 
\subfigure[Bowl]{\includegraphics[width=0.118\linewidth]{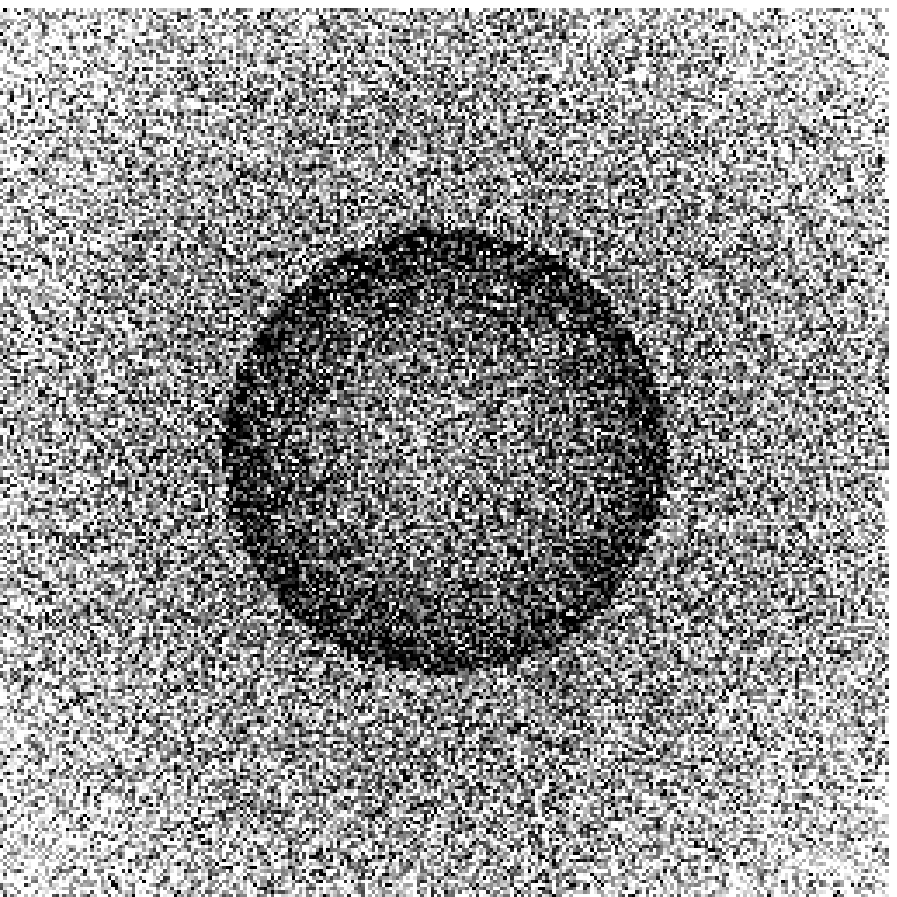}} 
\subfigure[Swoosh]{\includegraphics[width=0.118\linewidth]{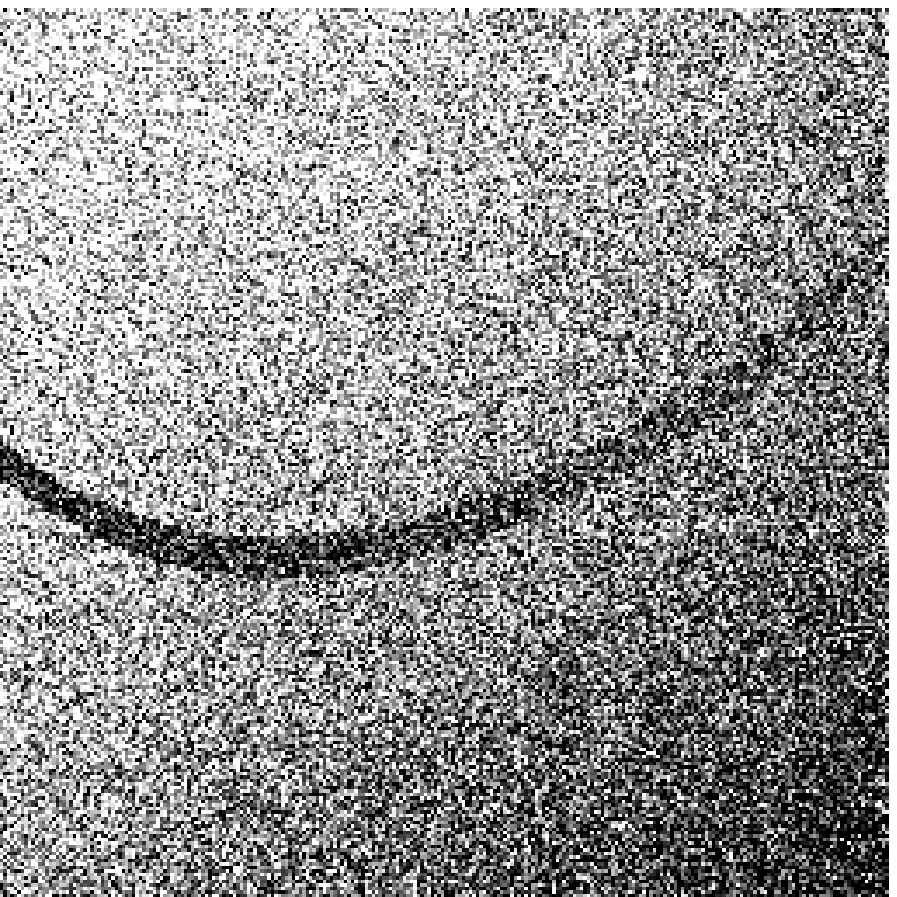}} 
\subfigure[Stripes]{\includegraphics[width=0.118\linewidth]{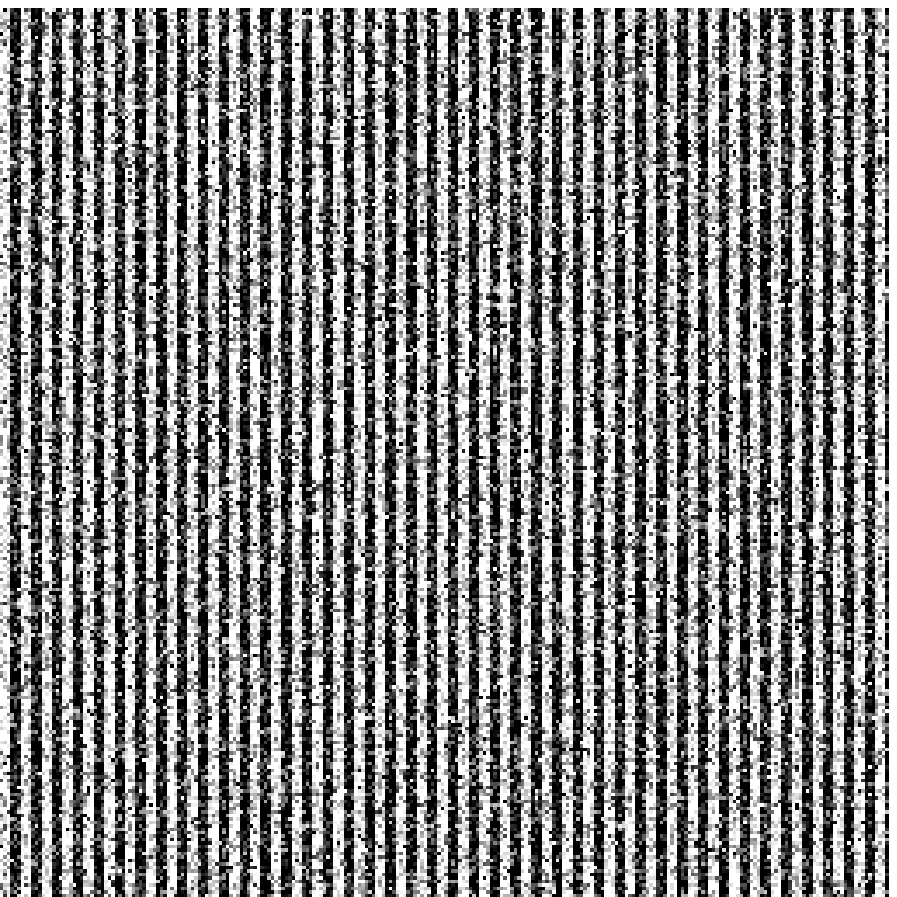}}
\subfigure[Ridges]{\includegraphics[width=0.118\linewidth]{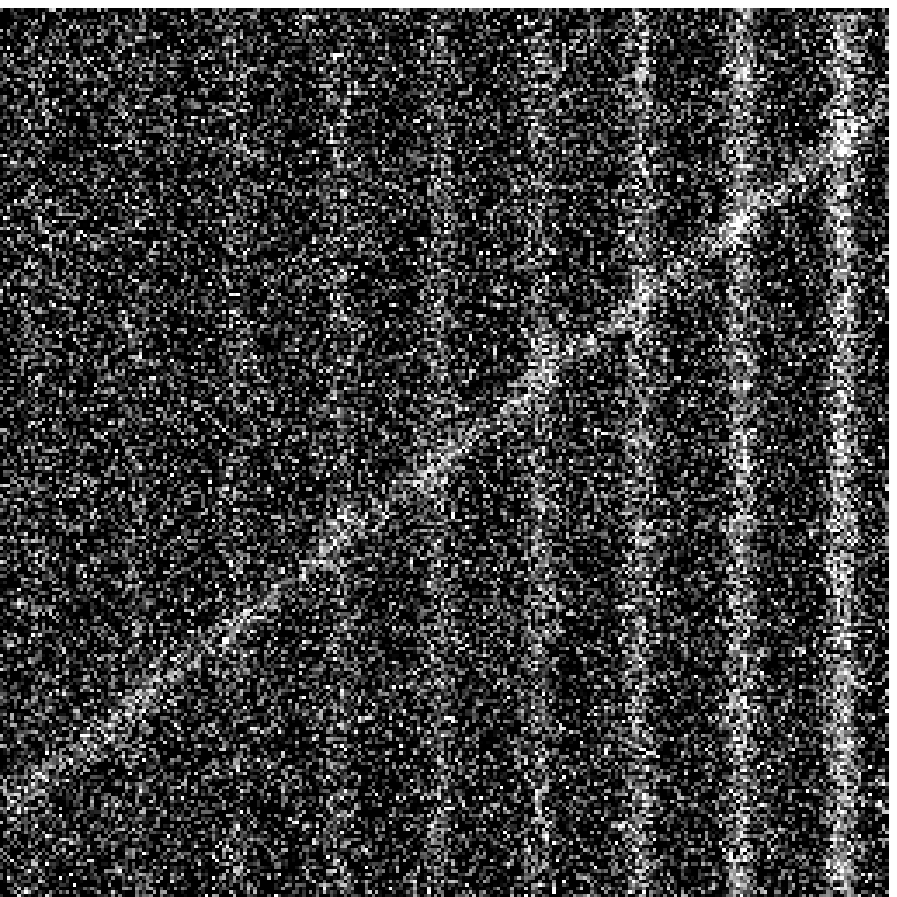}}  
\subfigure[Barbara]{\includegraphics[width=0.118\linewidth]{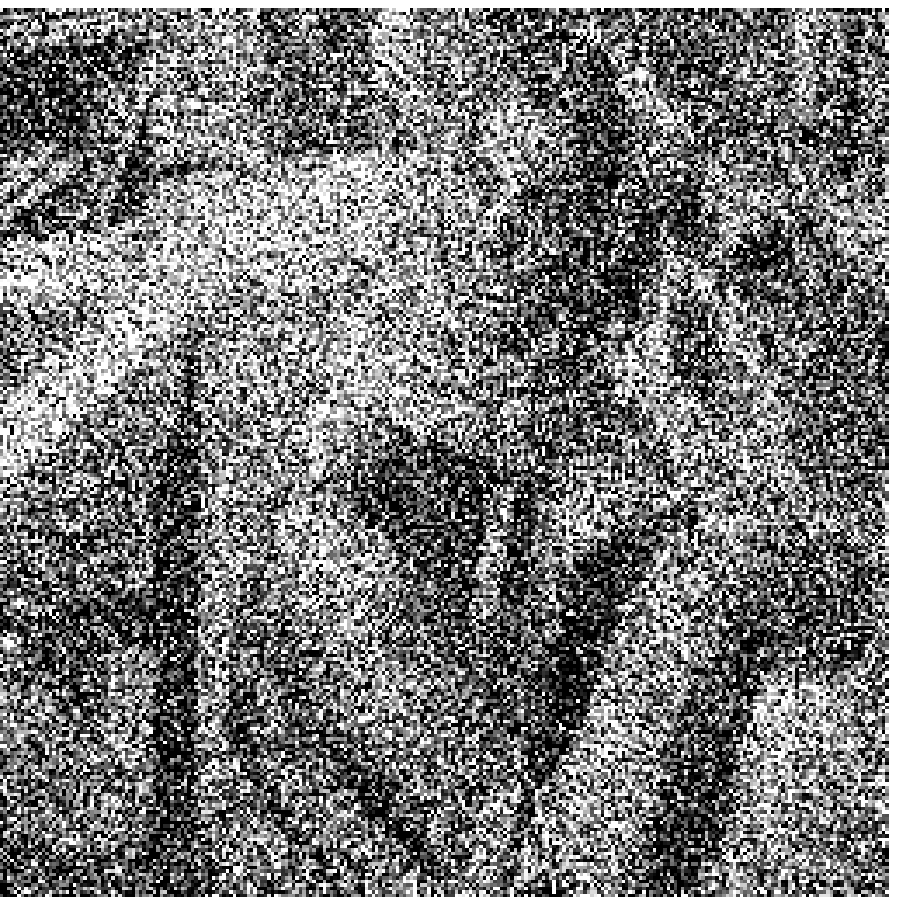}}
\subfigure[Cameraman]{\includegraphics[width=0.118\linewidth]{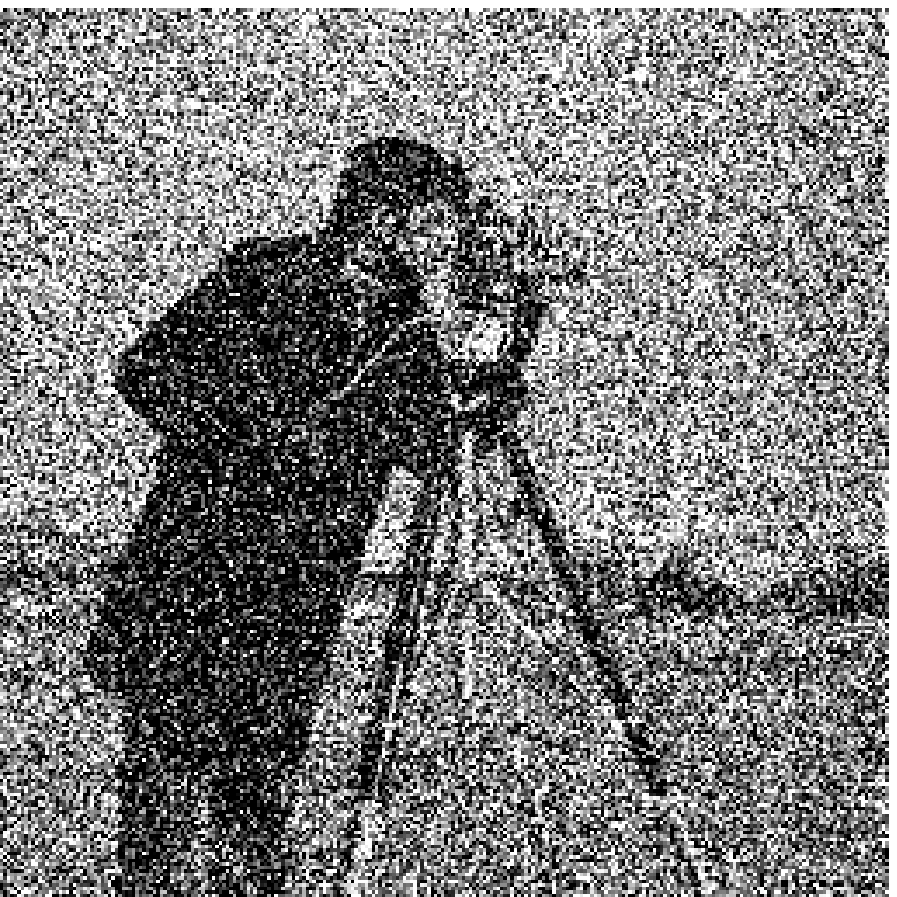}}   
\caption{Original and noisy images: cartoon (Blob, Bowl), thin features (Swoosh),  texture (Stripes) 
and natural images (Ridges, Barbara, Cameraman). 
}
\label{fig:original_noisy_images}
\end{figure}

\subsection{Thin features and textures}
In addition to considering cartoon images as defined above, we will
consider images which contain other features common in natural images,
such as thin regions a few pixel wide and regular textures.  We
consider simple models for these and show that YF and, more generally,
the NLM perform much better than linear filtering.  These models are
instances of the cartoon model where the forefront $\Omega$ varies
with $n$.  Let $\cF(\alpha, \cf)$ be defined as $\cF^{\rm
  cartoon}(\alpha, \cf)$ but without constraints on $\Omega$.
 
As a simple model of thin feature, consider an image $f$ in the
cartoon family, but where $\Omega$ is a thin $d_0$-dimensional surface
of thickness $a$ --- which will vary with $n$. A classical example of
this kind of structure is the support bar of the Cameraman's tripod,
see \figref{original_noisy_images}(g).  An example of function
from this class is illustrated by the Swoosh image, see
\figref{original_noisy_images}(c).

\begin{definition}[Thin feature function class]\label{def:thin}
$$\cF^{\rm thin}(\alpha, \cf, d_0, a) := \left\{ f \in \cF(\alpha, \cf): \Omega = \{x = (x', z): \dist(z, \phi(x')) < a\}\right\},$$
where $\phi : (0,1)^{d_0} \to (0,1)^{d-d_0}$ is $\cf$-Lipschitz.
\end{definition}

We may similarly define a class of regular pattern functions which
themselves may not be smooth, but which occur repeatedly across the
image domain.  This structure would be difficult to exploit with, say,
wavelet-based methods that fail to take advantage of image redundancy.
However, empirical evidence suggests that non-local adaptive kernels
can perform quite well on these images. A classical example of this
type of image structure is the striped scarf in the Barbara image.
The following is a class where $\Omega$ is made of the disjoint union
of translates of a smaller region $\Omega_0$ of diameter of order $a$
--- which will vary with $n$.

\begin{definition}[Regular pattern function class]\label{def:pattern}
  $$\cF^{\rm pattern}(\alpha, \cf, a) := \left\{ f \in \cF(\alpha,
    \cf): \Omega = (0,1)^d \cap \bigcup_{v \in a \Z^d} (\Xi + v)
  \right\},$$ where $\Xi \subset (0,a)^d$ is any set.  Note that the
  union above is
  disjoint. 
\end{definition}

An example of function from this class is illustrated by the Stripes
image in \figref{original_noisy_images}(d).

\section{Background on kernel methods for denoising}
\label{sec:background}
We now describe NLM and other related methods.  The story starts
with kernel smoothing (\ie linear filtering).  Though this age-old
method (with a proper choice of kernel) is essentially optimal when
the image does not have discontinuities, its performance suffers
dramatically in the presence of edges, which it tends to blur.  YF,
and more generally NLM, attempt to choose the kernel adaptively so
as to avoid averaging over the discontinuity.  

The estimates we consider are weighted averages of the pixel values
of the form
\begin{equation}\label{eq:neighbor_filter}
  \wh{f}_i = \displaystyle \frac{ \sum_{ j \in I_n^d} \, \wnlm_{i,j} \, y_j}{\sum_{j \in I_n^d} \, \wnlm_{i,j} } \, .
\end{equation}
The various methods that we study in this paper differ only in the
choice of weights $\wnlm_{i,j}$.  Adaptation to higher order of
smoothness is often accomplished by a local polynomial regression
(LPR)~\cite{Fan_Gijbels96,Hastie_Tibshirani_Friedman09}.  The local
polynomial estimator of degree $r$ and weights $(\wnlm_{i,j})$ is
\beq \label{local_poly}
\begin{cases}
&\wh{f}_i = \widehat{a}^{(i)}_0  \\
&\widehat{\ba}^{(i)}=\displaystyle\argmin_{\ba} \sum_{j \in I_n^d} \wnlm_{i,j} \left(y_j - \sum_{0 \leq |s| \leq r} a_s \, (x_j - x_i)^s\right)^2,
\end{cases}
\eeq where $ x^s := x_1^{s_1} \cdots x_d^{s_d}, $ for $x = (x_1,
\dots, x_d) \in \R^d$ and $s = (s_1, \dots, s_d) \in \R^d$, and the
minimization in \eqref{local_poly} is over $\ba = (a_s : 0 \leq |s|
\leq r)\in \R^q $ where $q = {r+d \choose d}$.  Note that, in fact,
\eqref{local_poly} leads to an estimator of the form
\eqref{eq:neighbor_filter} with different weights (\eg a smoother
kernel) \cite[p.\
34]{Tsybakov09}. We assume throughout that the polynomial degree $r$
is sufficiently large to take full advantage of the smoothness of $f$.
Specifically, if $f \in \cF^{\rm cartoon}(\alpha, \cf)$, we assume
that $r \geq \afloor$.  When the number of nonzero weights in
\eqref{local_poly} is not enough to determine $\wh{f}_i$ uniquely, we
define $\wh{f}_i$ as $y_i$, namely, we do not apply any smoothing.
Alternatively, one could decrease the degree of the polynomial
regression until the fit is well-defined, but this is not important in
our setting.

Since we know that $f$ takes values in $[0,1]$, we clip $\wh{f}$ so
that it also takes values in $[0,1]$.  This clipping does not increase
the MSE.

\subsection{Linear filtering (LF)}
\label{sec:linear}
This method can be traced back in the statistics literature to the
work of Nadaraya \cite{Nadaraya64} and Watson \cite{Watson64} 
(\lcf \cite{Hastie_Tibshirani_Friedman09} for details on kernel
methods).  In
this context the choice of the similarity between two pixels is only
controlled by spatial proximity:
\begin{equation}\label{eq:kernel_weights}
\wnlm_{i,j} = K_h(x_i,x_j) \, ,
\end{equation}
where $K_h(x,x')= K(\frac{x}{h},\frac{x'}{h})$ for a kernel function
$K$ and a bandwidth $h > 0$, which is independent of the location in
the nonadaptive (classical) version. Common choices include the
Gaussian kernel, but we focus on the box kernel
\begin{align}
K_h(x,x') &= \1{\{\|x - x'\|_{\infty} \leq h\}}\label{box-kernel}.
\end{align}

\subsection{Yaroslavsky's filter (YF)}
\label{sec:yaros}
YF was introduced by Yaroslavsky \cite{Yaroslavsky85} and
independently by Lee \cite{Lee83}, and more modern variants such as
SUSAN \cite{Smith_Brady97} and Bilateral filtering
\cite{Tomasi_Manduchi98}.  Here, similarity between
pixels is based on their spatial distance and on the relative
proximity of image intensity at these pixels.  This translates into
choosing weights in \eqref{eq:neighbor_filter} of the form
\begin{equation}\label{eq:yaroslavsky_weights}
\wnlm_{i,j} = K_{h}(x_j,x_j) \ L_{h_y}(y_i,y_j) \,,
\end{equation}
where $K,L$ are kernels and $h,h_y$ the associated bandwidths.  $(K,
h)$ control the spatial proximity while $(L, h_y)$ control the
photometric proximity.  As in classical kernel smoothing, $h$ plays
the role of spatial bandwidth, while $h_{y}$ is a photometric
bandwidth. 
In this work we only consider the simple version using the box kernel:
\begin{align}
K_{h_{y}}(y,y') &= \1{\{|y - y'|\leq h_y\}}.
\end{align}

\subsection{Non-Local Means (NLM) and patch-based methods}
\label{sec:nlm}
NLM and other patch-based methods generalize the idea of including the
photometric proximity in the kernel.  In \cite{Buades_Coll_Morel05},
the distance between two pixels is solely measured in terms of the
discrepancy between patches surrounding the pixels considered. Though
spatial proximity was already introduced in
\cite{Buades_Coll_Morel05}, it was only mentioned as a numerical
parameter to solve a computational issue. However, later works (\lcf
\cite{Salmon10,Zontak_Irani11}) have shown that spatial proximity can improve NLM
performance.  We consider NLM with spatial proximity, which includes
the non-local version, the two being identical when $h$ is
sufficiently large.

A generic description is the following.  Let $h_{\patch} > 0$ and let
$\patch_i$ (leaving $h_{\patch}$ implicit) be the hypercube of width
$h_{\patch}$ centered at $x_i$, \ie
\begin{equation}
\patch_i =  x_i  + \left[-\frac{h_{\patch}}{2},\frac{h_{\patch}}{2}\right]^d = \left\{x: \|x - x_i\|_{\infty} \leq \frac{h_{\patch}}{2} \right\}.   
\end{equation}
Such a patch corresponds to a pixel patch of width $[h_{\patch} n] + 1$ in the
digital image (where $[a]$ denotes the largest integer not exceeding
$a \in \R$).  Let $\bypi = (y_j : x_j \in \patch_i)$ be the
vector of pixel values over the patch centered at $x_i$.  With this
notation, the weights used in NLM are:
\begin{equation}\label{eq:nlm_weights}
\wnlm_{i,j}=  K_{h}(x_i,x_j) \ L_{h_y}\left( \bypi, \bypj \right) \, ,
\end{equation}
where $K,L$ are kernel functions and $h,h_{y}$ are bandwidths, as before. One classical choice of $L_{h_y}$ (which we consider in our theoretical results) is
\begin{equation}\label{eq:nlmL}
L_{h_y}\left( \bypi, \bypj \right) = \1\{\|\bypi - \bypj\|_2 \le h_y\}.
\end{equation}
The photometric similarity is based on the Euclidean distance between
the patches (as vectors) around the pixels.  We refer to this as
``classical'' or Euclidean NLM (or just NLM).

Computing $L_{h_y}$ can be computationally intensive for large $h_{\patch}$. To
address this, some authors have considered projecting
$\bypi$ onto a low-dimensional subspace and using this
projection to compute an approximation of
$L_{h_y}(\bypi,\bypj)$.  This introduces an
interesting trade-off between computational complexity and accuracy
which is examined in \cite{Azzabou_Paragios_Guichard07,Tasdizen09}.
In this paper, we consider a $1$-dimensional projection introduced
in~\cite{Mahmoudi_Sapiro05} where patches are simply compared via their
means alone, resulting in a photometric kernel of the form
\beq \label{mean_patch} 
L_{h_y}\left( \bypi, \bypj \right) = \overline{L}_{h_y}\left( \ypbari, \ypbarj
\right), \quad \ypbari := \ave(\bypi). 
\eeq
We refer to this method as
NLM-average.   For our theoretical results, we consider the kernel
\begin{equation} \label{eq:nlmML}
\overline{L}_{h_y}\left( \ypbari, \ypbarj \right) = \1\{|\ypbari - \ypbarj| \le h_y\}.
\end{equation}


In our analysis, Euclidean NLM \eqref{eq:nlmL} and NLM-average
\eqref{eq:nlmML} behave similarly, except for the regular pattern
model, where the former is generally superior.  In practice, however,
we note a difference.  In smooth regions, the average in
\eqref{eq:nlmML} has little bias and little variance, making it
significantly more robust to noise than the Euclidean distance
\eqref{eq:nlmL}. Near edges or patterns, however, the bias of the
average in \eqref{eq:nlmML} can outweigh the variance, making 
Euclidean NLM \eqref{eq:nlmL} superior.  This insight is supported by our
experimental results in \secref{experiments}.

The spatial bandwidth $h$ is typically larger than the patch width
$h_{\patch}$.  Common sizes used in practice are $21 \times 21$ kernel windows
(also referred to as the searching zone) and $7 \times 7$ patches (in
pixel units). Common kernels are the box-kernel for $K$ and the
Gaussian kernel for $L$.  Though we assume box kernels for both, our results extend readily to other kernel functions.

\section{Oracle inequalities and minimax results for cartoon images}
\label{sec:results}

We analyze the performance of the kernel-based methods described in
\secref{background} within the mathematical framework detailed in
\secref{setting}.  Qualitatively speaking, our theoretical results are
congruent with what is observed in practice; see our experiments in
\secref{experiments}.

Indeed, we show that LF blurs edges, which is
in fact well-known both in theory and practice.  YF performs well when
the JNR is large, and poorly otherwise.  This filter relies on a clear
gap between the pixel values on either side of the discontinuity: when
the JNR is large, there is indeed a gap, which ceases to exist when
the JNR is of order 1 (\lcf \figref{disks}).  The
latter situation is where NLM shines.  Indeed, patches of size larger
than one pixel gather more information about the area surrounding
the pixel, which NLM (implicitly) uses to assess whether two pixels
are on the same size of the discontinuity.  For example, comparing
patches in \figref{disk_patch}, we see that the means of sufficiently
large patches allow us to estimate reliably whether each center pixel
is in $\Omega$ or not, even with an JNR of order 1.

\begin{figure}[htb]
\centering
\subfigure{\includegraphics[width=0.19\linewidth]{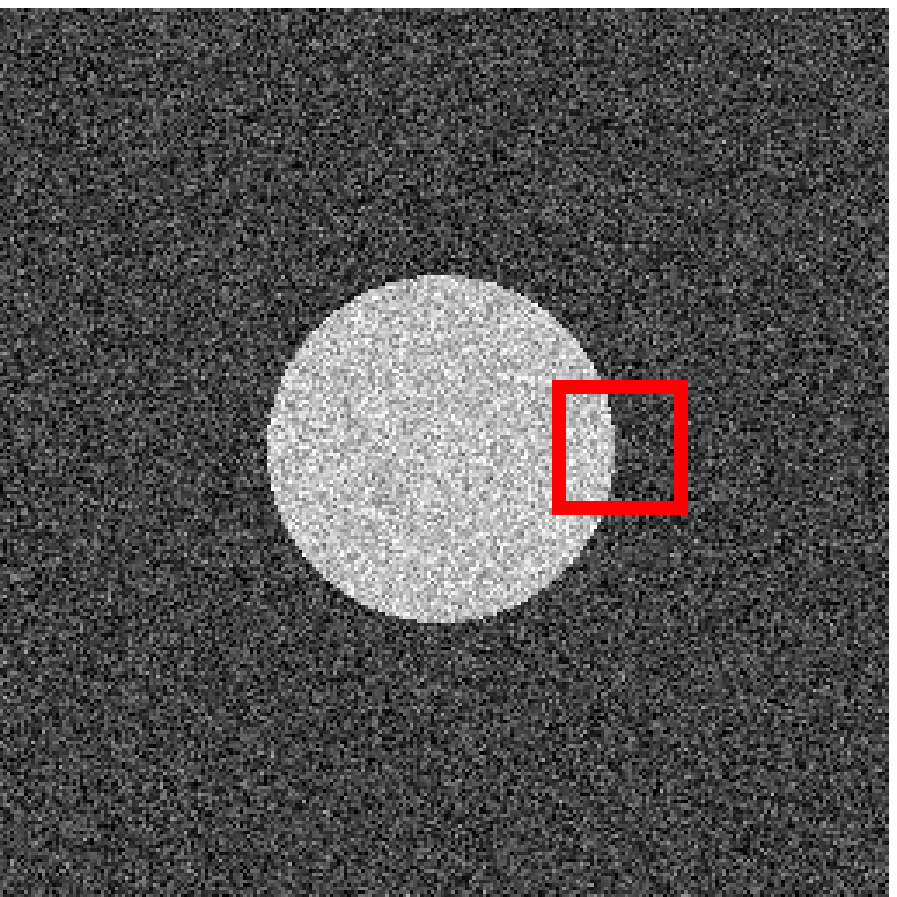}}
\subfigure{\includegraphics[width=0.19\linewidth]{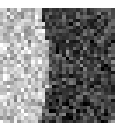}}
\subfigure{\includegraphics[width=0.19\linewidth,height=0.186\linewidth]{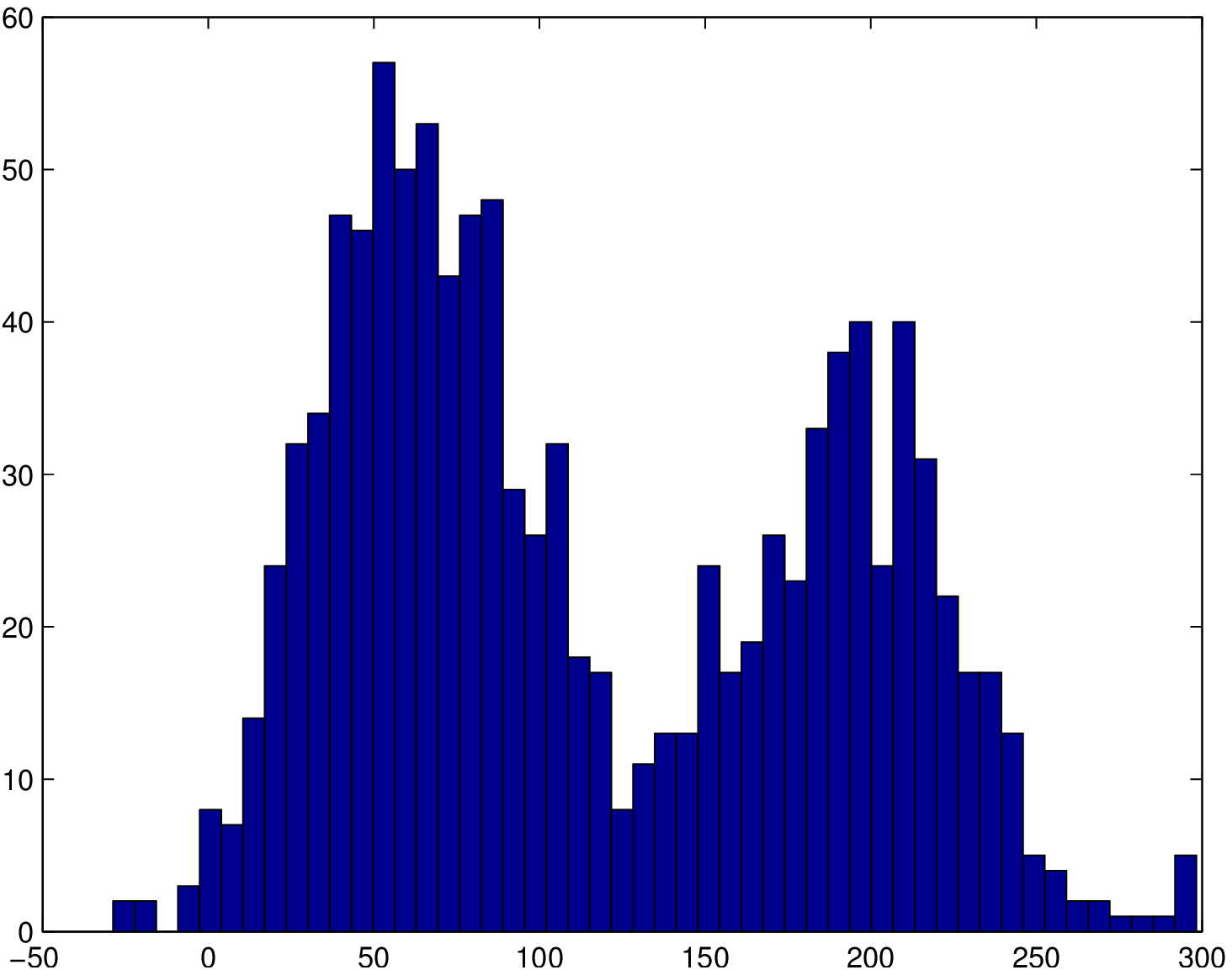}}
\\[-0.070cm]
\subfigure{\includegraphics[width=0.19\linewidth]{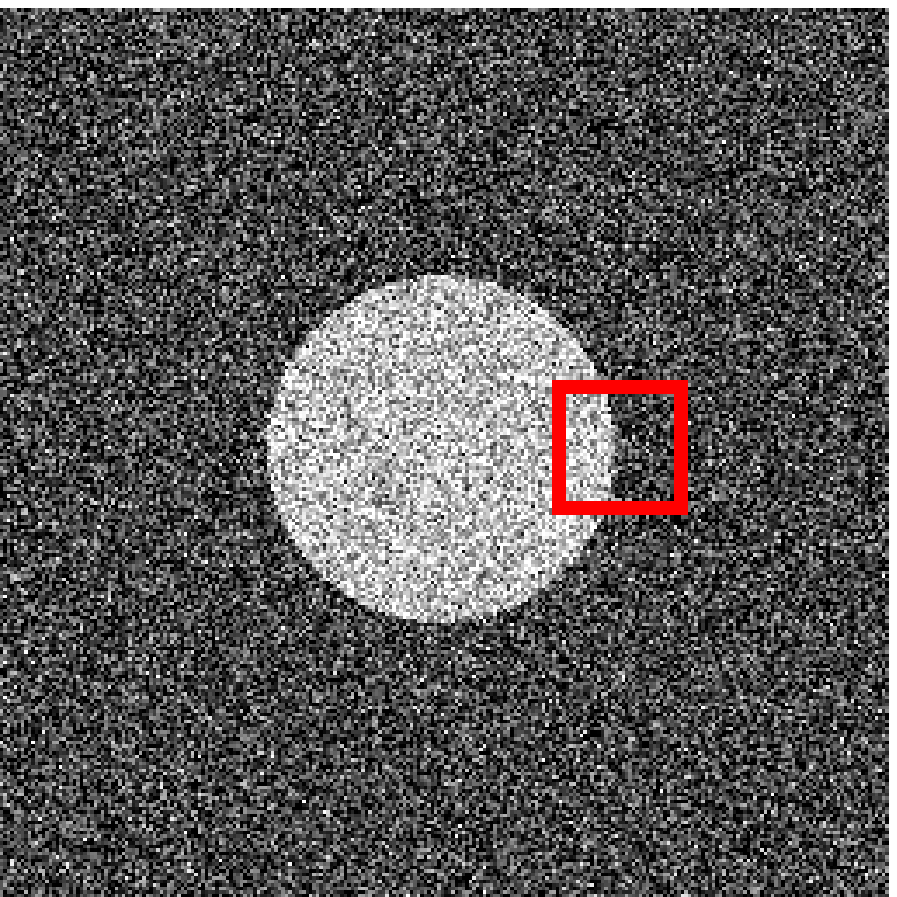}}
\subfigure{\includegraphics[width=0.19\linewidth]{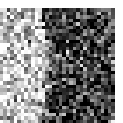}}
\subfigure{\includegraphics[width=0.19\linewidth,height=0.186\linewidth]{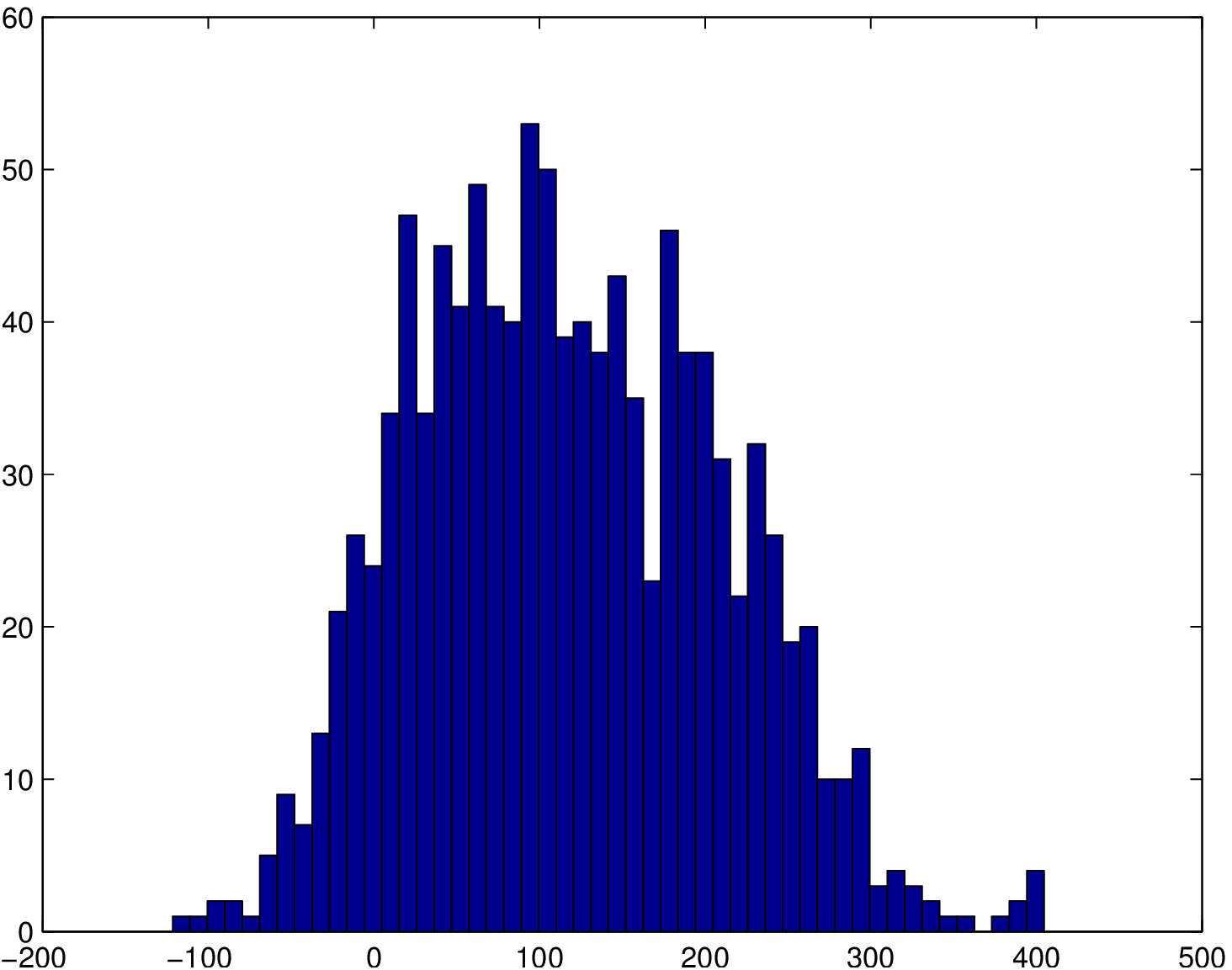}}
\\[-0.070cm]
\subfigure{\includegraphics[width=0.19\linewidth]{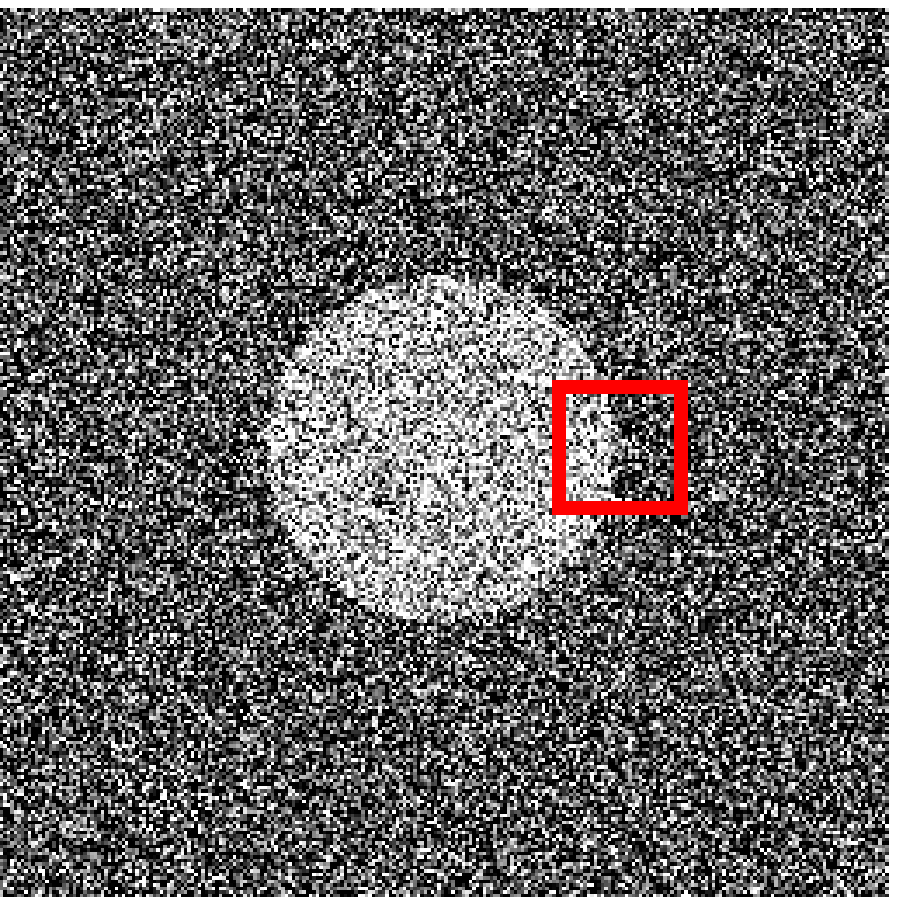}}
\subfigure{\includegraphics[width=0.19\linewidth]{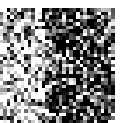}}
\subfigure{\includegraphics[width=0.19\linewidth,height=0.186\linewidth]{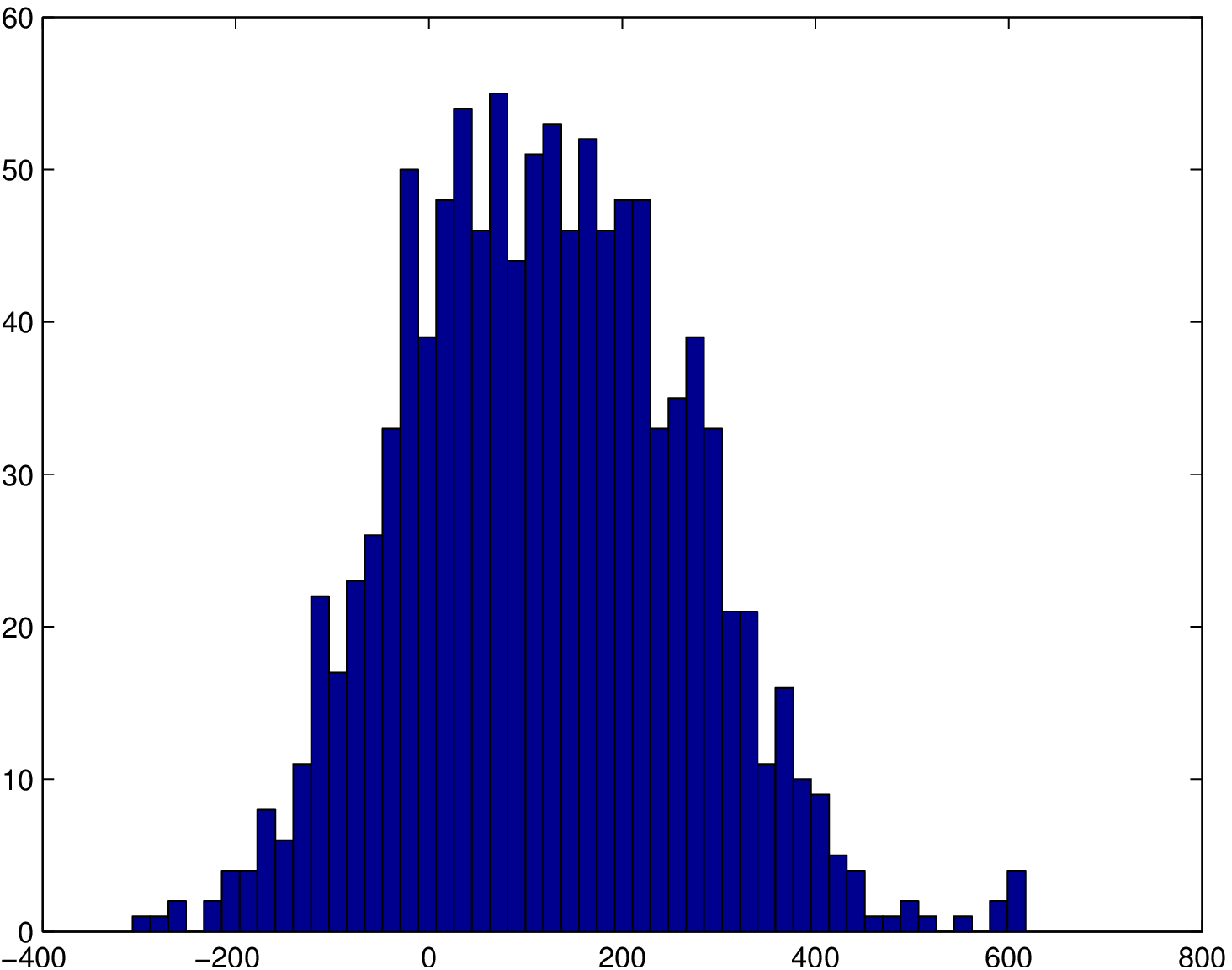}}
\caption{On the left column are cartoon images with
  increasing levels of noise (rows are with $JNR= 4, 2,
  1$ from top to bottom).  A searching zone is displayed in red, for a
  pixel near the discontinuity.  The middle column is a close-up
  of the searching zone, while the right one provides histograms
  of pixel values within
  it.}
\label{fig:disks}
\end{figure}

\begin{figure}[hbt!]
\centering
\subfigure{\includegraphics[width=0.19\linewidth]{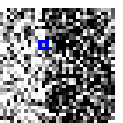}}
\subfigure{\includegraphics[width=0.19\linewidth]{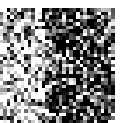}}
\subfigure{\includegraphics[width=0.19\linewidth,height=0.186\linewidth]{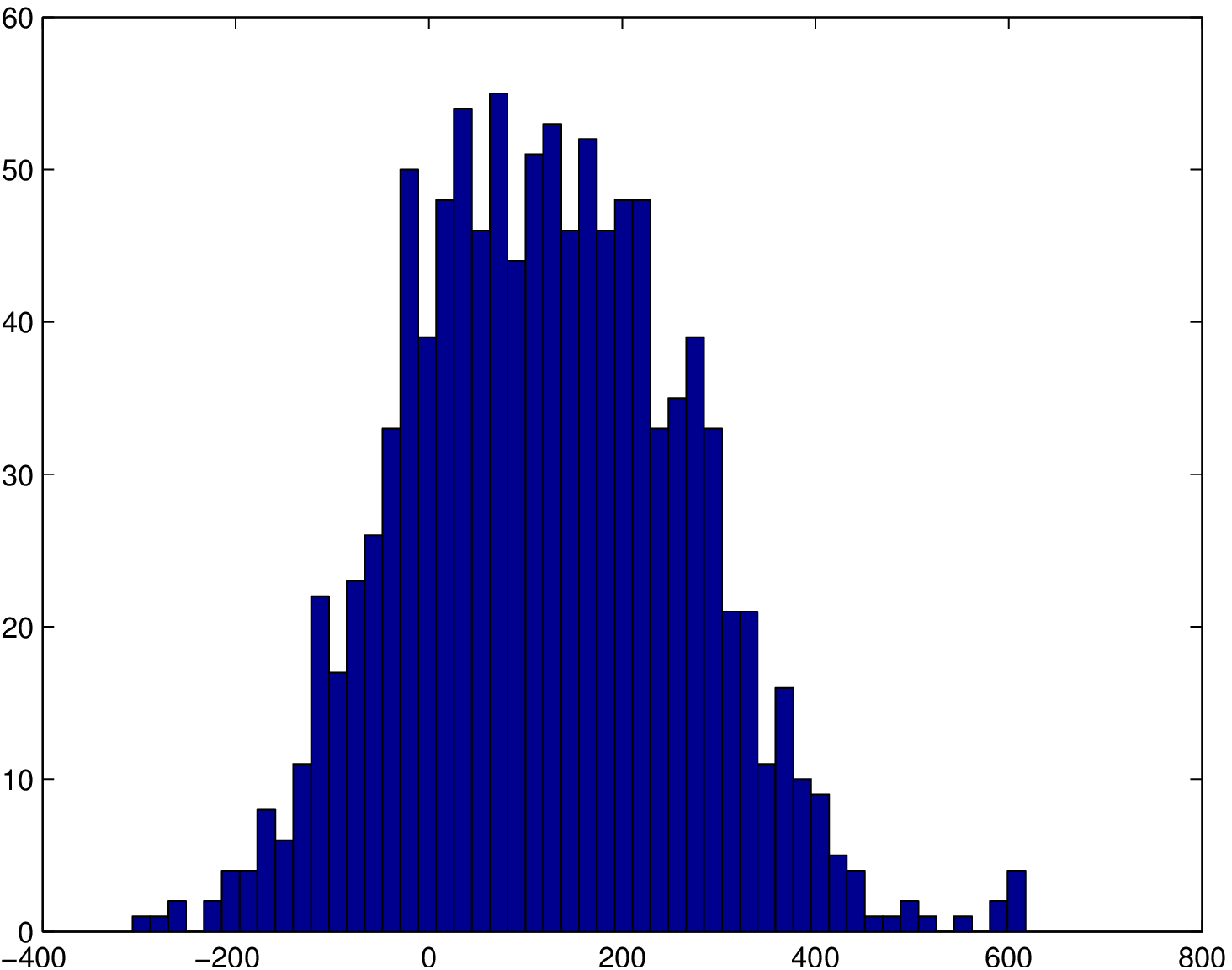}}
\\[-0.070cm]
\subfigure{\includegraphics[width=0.19\linewidth]{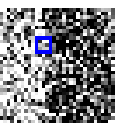}}
\subfigure{\includegraphics[width=0.19\linewidth]{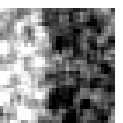}}
\subfigure{\includegraphics[width=0.19\linewidth,height=0.186\linewidth]{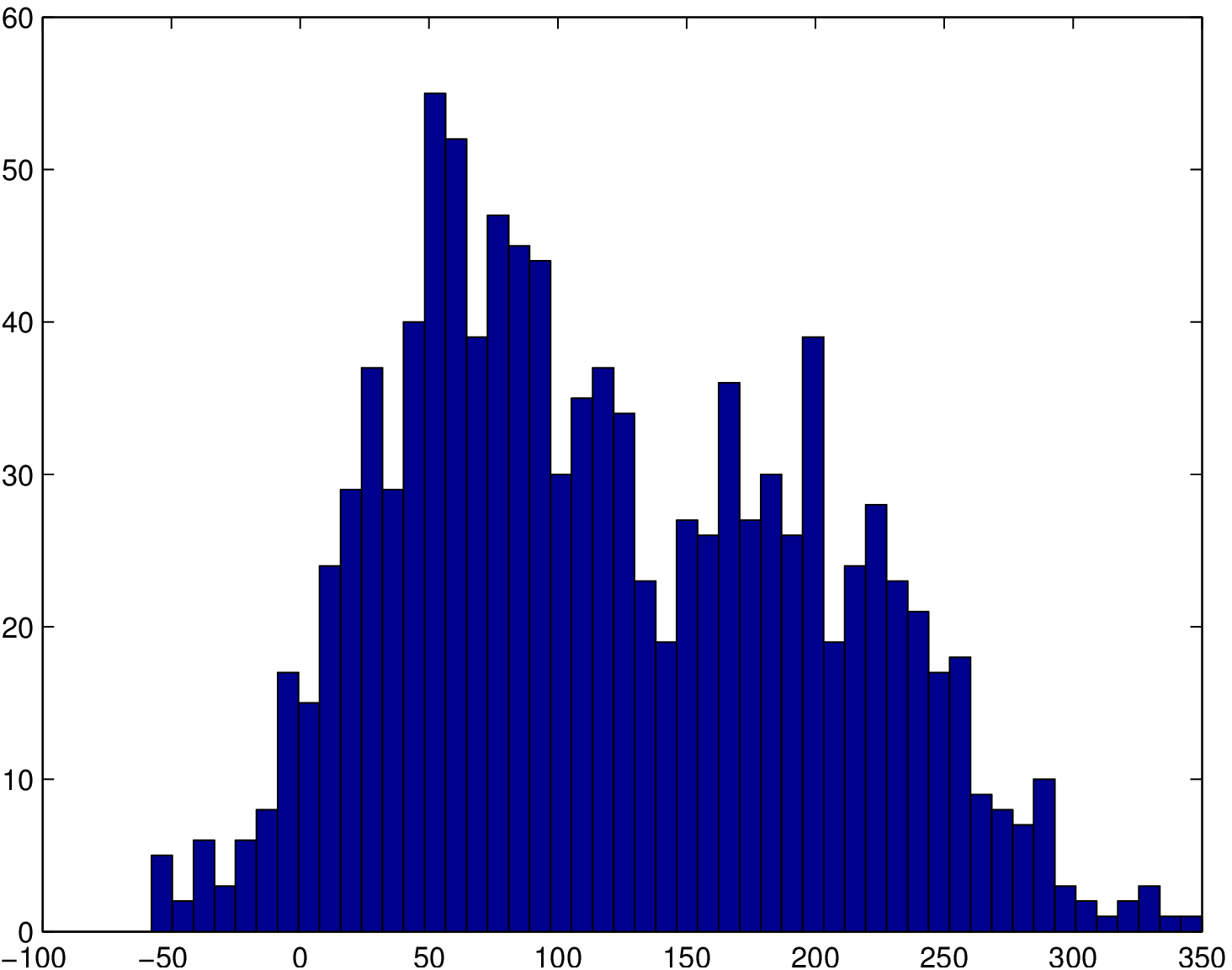}}
\\[-0.070cm]
\subfigure{\includegraphics[width=0.19\linewidth]{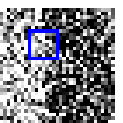}}
\subfigure{\includegraphics[width=0.19\linewidth]{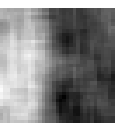}}
\subfigure{\includegraphics[width=0.19\linewidth,height=0.186\linewidth]{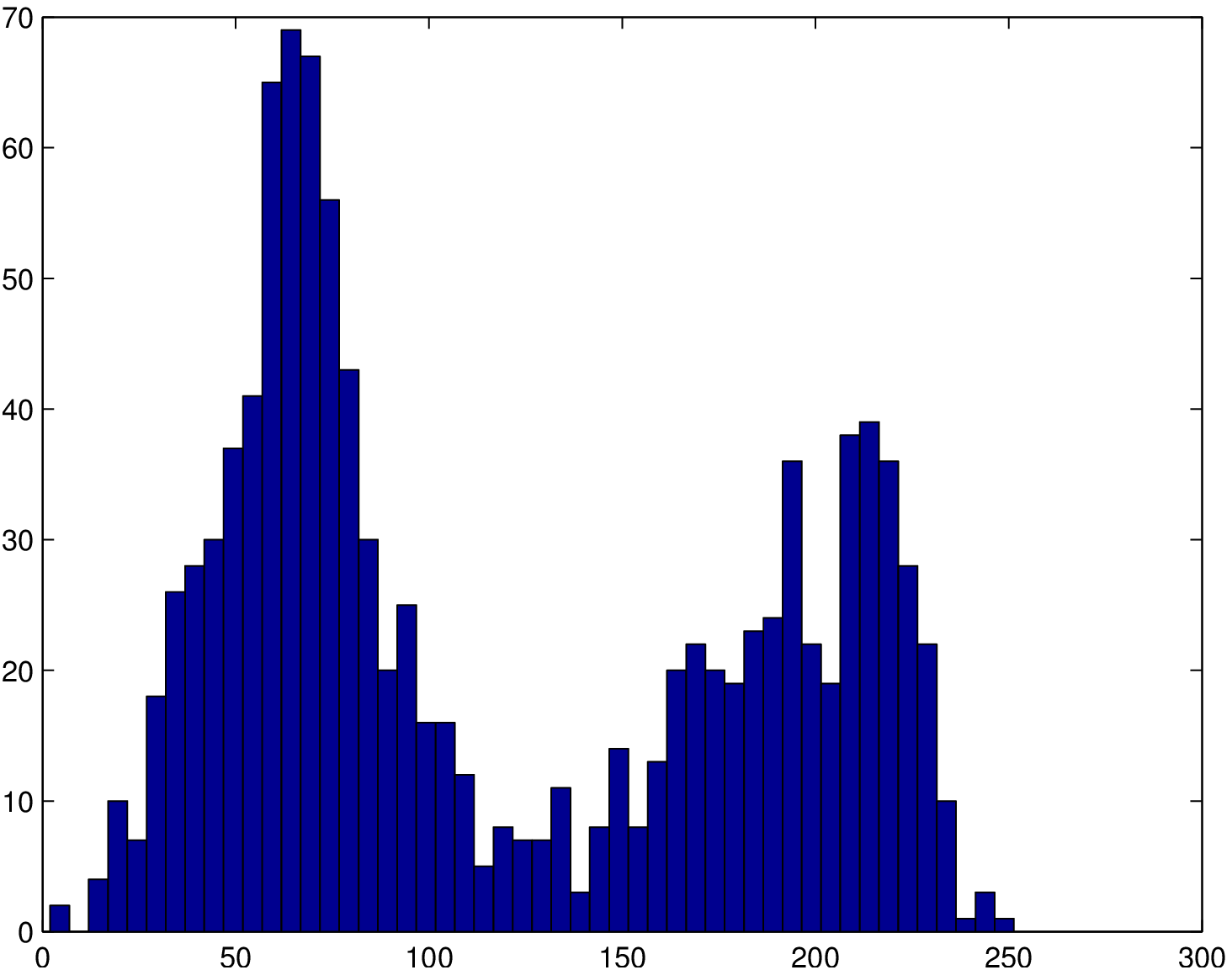}}
\caption{We use again the image from \figref{disks}, last row
  (JNR=1).  The noisy image is displayed in the first column, with
  kernel supports. The second column is the result of the
  local (box) kernel averaging using support of width 1, 3 and 7 from
  top to bottom. The last column provides histograms of the filtered
  pixels }
\label{fig:disk_patch}
\end{figure}

In what follows, we focus on the LPR variants described
in \eqref{local_poly} to avoid a systematic bias that conventional
weighted average variants suffer from. It is well-known that this bias
appears near the boundary of the image, though this can be corrected
with a proper extension of the image. More importantly, this bias
arises also near the discontinuity.  Note that enforcing the spatial
windows to have the same (symmetric) shape puts a real constraint on
the resulting performance of the algorithm, as discussed in
\secref{variable}.  The choice of kernel $K$ for LPR variants
\eqref{local_poly} is unimportant for standard kernel regression as long as it satisfies some basic
properties. (For example, in \cite[Th.~3.1]{Fan_Gijbels96}, the kernel does not impact 
the error rate except for a multiplicative constant.) Less is known
about the impact of the choice of $L$. In this paper, we consider box kernels for both 
spatial and photometric components, namely \eqref{box-kernel},
\eqref{eq:nlmL} or \eqref{eq:nlmML}.

To obtain our bounds, we minimize the error with respect to the
  bandwidth parameter $h$, effectively striking a good balance between
  the bias and variance in \eqref{bias-var}.  Indeed, the larger the
bandwidth, the larger the bias and the smaller the variance.  The
issue with kernel smoothing --- whether in the form of weighted
average \eqref{eq:neighbor_filter} or LPR \eqref{local_poly} --- is
that it suffers from a substantial bias when the smoothing window
(those points where the weights are equal to one) includes points from
the ``other side'' of the discontinuity.  At the same time, the window
cannot be too small, for otherwise the variance will be overwhelming.

\subsection{Linear kernel smoothing blurs edges}
It is well-known that LF blurs discontinuities.
This comes from the fact that the window size is fixed --- the same at
all pixels --- so points near and points far from the discontinuity
are treated in the same way.  This lack of adaptivity leads to a
substantial MSE.  How does that statement translate into a
mathematical result within our framework?  The following is proved
in~\cite{AriasCastro_Donoho09} for $d \leq 2$, though the result (at
least the upper bound) is probably older; see also \cite{Korostelev_Tsybakov93}.  
We provide a proof for LPR in \secref{proofs}.

\medskip
\begin{thm} \label{thm:linear} Let $\wh{\bbf}_h^{\rm LF}$ denote the
linear estimator, in the form of either local average
\eqref{eq:neighbor_filter} or LPR \eqref{local_poly}, with weights
as in \eqref{eq:kernel_weights}.  We have
\[
\inf_h \cR_n(\wh{\bbf}_h^{\rm LF}, \cF^{\rm cartoon}(\alpha, \cf)) \asymp \cR^{\rm LF} := (\sigma^{2}/n^{d})^{1/(d+1)},
\]
and the optimal choice of bandwidth is $h \asymp h^{\rm LF} :=
(\sigma^{2}/n^{d})^{1/(d+1)}$.
\end{thm}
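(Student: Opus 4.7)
The plan is to decompose the MSE at each pixel into squared bias and variance via \eqref{bias-var}, and show that the worst-case rate is dominated by a term of order $h^{2\alpha} \vee h$ from the bias (smoothness limitation away from the edge, plus discontinuity-induced bias near it) together with $\sigma^2/(nh)^d$ from the variance. Since $\alpha \geq 1$ in the cartoon setting, $h^{2\alpha} \leq h$ for small $h$, so balancing $h + \sigma^2/(nh)^d$ in $h$ yields the optimal bandwidth $h \asymp (\sigma^2/n^d)^{1/(d+1)}$ and the claimed rate $\cR^{\rm LF}$.

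For the \textbf{upper bound}, I would split the grid $I_n^d$ according to whether $x_i$ lies in $B(\partial\Omega, h)$ or not. Away from the discontinuity, the box-kernel window of radius $h$ around $x_i$ lies entirely in $\Omega$ or $\Omega^c$, so $f$ restricted to it agrees with a single H\"older function $\fin$ or $\fout$; the standard LPR analysis, using the Taylor expansion provided by \lemref{taylor} together with the assumption $r \geq \afloor$, then yields squared bias $O(h^{2\alpha})$ and variance $O(\sigma^2/(nh)^d)$ at such pixels. For pixels in $B(\partial\Omega, h)$ the bias is only $O(1)$ (a polynomial of degree $r$ cannot approximate a jump), but the variance bound is unchanged, and the bi-Lipschitz parametrization of $\Omega$ by $\phi$ guarantees that the Lebesgue measure of this bad band satisfies $\Vol(B(\partial\Omega, h)) = O(h)$, so the fraction of ``bad'' pixels is $O(h)$. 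Summing these contributions gives
\[
\sup_{f \in \cF^{\rm cartoon}} \mse_f(\wh{\bbf}_h^{\rm LF}) \;\lesssim\; h^{2\alpha} + h + \frac{\sigma^2}{(nh)^d} \;\lesssim\; h + \frac{\sigma^2}{(nh)^d},
\]
optimized at $h \asymp (\sigma^2/n^d)^{1/(d+1)}$.

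For the \textbf{lower bound}, I would exhibit a single hard function $f_0 \in \cF^{\rm cartoon}(\alpha, \cf)$ --- for instance, $\Omega$ a ball of radius $1/2$ centered at $(1/2, \dots, 1/2)$ with $\fin \equiv 1/\cf$ and $\fout \equiv 0$ --- and show that for any $h$, the LF or LPR estimator incurs MSE $\gtrsim h + \sigma^2/(nh)^d$. The variance part is standard: any weighted average supported on $\asymp (nh)^d$ i.i.d.\ noisy samples has variance $\gtrsim \sigma^2/(nh)^d$ (and the LPR weights induced by \eqref{local_poly} satisfy this). For the bias, I would consider pixels $x_i$ whose box-kernel window contains a constant fraction of points on each side of $\partial\Omega$; the bi-Lipschitz structure gives a band of thickness $\asymp h$ containing a fraction $\gtrsim h$ of pixels with this property, and at each such pixel the LPR fit must incur squared bias at least a positive constant times $\mu(f_0)^2 \geq 1/\cf^2$.

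The main obstacle is the LPR bias lower bound at the discontinuity: it is not immediate that a polynomial of degree $r$ cannot ``absorb'' the step. The cleanest argument uses the symmetry of the box kernel around $x_i$: since the design points inside the window are (up to $O(1/(nh))$) a regular grid symmetric about $x_i$ and the weights depend only on spatial position, the weighted $L^2$ projection of $\one{x \in \Omega}$ onto polynomials of degree $r$ on a window straddling a smooth hypersurface cannot, by a compactness/scaling argument, approximate the step to better than a window-independent positive constant. This yields the matching bias lower bound and hence $\inf_h \cR_n(\wh{\bbf}_h^{\rm LF}, \cF^{\rm cartoon}(\alpha, \cf)) \asymp (\sigma^2/n^d)^{1/(d+1)}$ with the stated optimal bandwidth.
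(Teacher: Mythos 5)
Your proposal is correct and follows essentially the same route as the paper: the same bias--variance split into pixels inside and outside an $O(h)$-thick band around $\partial\Omega$ (with the band's pixel count controlled via the bi-Lipschitz parametrization, as in Lemmas~\ref{lem:count} and~\ref{lem:vol}), the same balancing of $h + \sigma^2(nh)^{-d}$, and the same lower-bound strategy combining a variance lower bound off the edge with a constant-order bias on the edge. The one step you flag as the main obstacle --- that a degree-$r$ LPR fit cannot absorb the jump --- is exactly what the paper's Lemma~\ref{lem:local_poly_lb} supplies, via a Riemann-sum limit showing the bias converges to $1/2$ times the jump at the discontinuity, which is the concrete form of your compactness/scaling argument.
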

\medskip

Note that the bound does not depend on the regularity $\alpha \geq 1$
of the function $f$.  As apparent in the proof, this is because LF blurs edges: to
strike a good bias-variance trade-off, the smoothing window cannot be
too small, transforming sharp edges into ramps.  The resulting bias is
then larger than the bias over the smooth regions, which is where
$\alpha$ appears.

\subsection{Oracle kernel}

What can we hope to achieve with adaptive kernel methods?  Statisticians have
used the notion of an oracle to answer this
question~\cite{Donoho_Johnstone94,Johnstone98,Tsybakov09}.  We saw
that what limits linear filtering is a large bias
near the discontinuity, due to the mixing of pixels from both sides.
What if we had access to an oracle that would identify for us the
foreground and the background?

The \textit{membership oracle} tells us which sample points belong to
$\Omega$ or to $\Omega^c$.  With access to this oracle, we simply
process the smooth pieces, $\fin$ and $\fout$, separately.  By doing
so we achieve the minimax rate for the class $\cH_d(\alpha, \cf)$: the
information this oracle provides is sufficient to do as well as if
there were no discontinuity. This is illustrated in
  \figref{windows_oracles} (best viewed in color).

\begin{figure}[hbt!]
\centering
\subfigure[Kernel smoothing]{\includegraphics[width=0.24\linewidth]{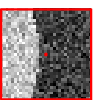}}\hspace{1cm}
\subfigure[\textit{Bandwidth oracle}]{\includegraphics[width=0.24\linewidth]{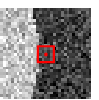}}\hspace{1cm}
\subfigure[\textit{Membership oracle}]{\includegraphics[width=0.24\linewidth]{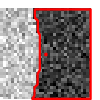}}
\caption{The kernel supports for the linear filter, the bandwidth oracle and  the membership oracle} 
\label{fig:windows_oracles}
\end{figure}

\medskip
\begin{thm} \label{thm:MO} Let $\wh{\bbf}_h^{\rm MO}$ denote LPR
  estimator \eqref{local_poly} with weights as in
  \eqref{eq:kernel_weights} when $x_i$ and $x_j$ belong to the same
  side of the discontinuity, and set to zero otherwise.  We have
\[
\inf_h \cR_n(\wh{\bbf}_h^{\rm MO}, \cF^{\rm cartoon}(\alpha, \cf))
\asymp \cR^{\rm MO} := (\sigma^{2}/n^{d})^{2\alpha/(d+2\alpha)},
\]
and the optimal choice of bandwidth is $h \asymp h^{\rm MO} :=
(\sigma^{2}/n^{d})^{1/(d+2\alpha)}$.
\end{thm}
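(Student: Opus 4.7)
The plan is to reduce the membership-oracle cartoon problem to two independent Hölder regression problems on $\Omega$ and on $\Omega^c$, and then to invoke the classical theory of local polynomial regression over $\cH_d(\alpha,\cf)$.

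For the upper bound, fix $f \in \cF^{\rm cartoon}(\alpha,\cf)$ and a pixel $x_i$; by symmetry assume $x_i \in \Omega$. By construction $\wh{f}_i$ is the zero-th coefficient of the LPR fit \eqref{local_poly} computed over the index set $S_i := \{j \in I_n^d : x_j \in \patch_i \cap \Omega\}$, and on this set the target coincides with $\fin \in \cH_d(\alpha,\cf)$. Combining the Taylor expansion of $\fin$ of order $\afloor$ (\lemref{taylor}) with the general pointwise LPR estimates to be established in \secref{proofs}, the pointwise bias is $O(h^\alpha)$ and the pointwise variance is $O(\sigma^2/|S_i|)$, provided the local design matrix is uniformly well-conditioned. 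The crucial quantitative ingredient is $|S_i| \gtrsim (nh)^d$, which is delivered by the bi-Lipschitz hypothesis $\Omega = \phi(B(0,1))$: this yields a uniform interior cone-type property, so that $\Vol(\patch_i \cap \Omega) \gtrsim h^d$ for every $x_i \in \Omega$ and $h \lesssim 1$, together with a uniform lower bound on the smallest singular value of the LPR normal equations. Averaging the pointwise MSE bound $h^{2\alpha} + \sigma^2/(nh)^d$ over the $n^d$ pixels and optimizing in $h$ then yields $h \asymp h^{\rm MO}$ and MSE $\lesssim \cR^{\rm MO}$.

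For the matching lower bound, I would restrict the cartoon class to a sub-family in which $\Omega$ is a fixed ball inscribed in $(0,1)^d$ and $\fin \equiv 0$, while $\fout$ ranges over a small perturbation neighborhood of a constant $c_0 > 1/\cf$ inside $\cH_d(\alpha,\cf)$, which automatically enforces the jump condition \eqref{diff}. Restricted to $\Omega^c$, the oracle-LPR estimator reduces to ordinary LPR on a Hölder target sampled at $\asymp n^d$ lattice points with noise variance $\sigma^2$, and the classical minimax lower bound for Hölder regression (\eg\ \cite{Korostelev_Tsybakov93}) gives $\sup_f \mse_f(\wh{\bbf}_h^{\rm MO}) \gtrsim \cR^{\rm MO}$, uniformly in $h$.

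The principal obstacle is the boundary analysis implicit in the upper bound: one must verify that the bi-Lipschitz regularity of $\Omega$ indeed yields both the volume bound $\Vol(\patch_i \cap \Omega) \gtrsim h^d$ and a uniform conditioning of the LPR design, even for pixels arbitrarily close to $\partial\Omega$. Once this uniform regularity of the local design is in place, the remainder of the argument is a direct and standard application of the LPR machinery.
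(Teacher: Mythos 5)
Your proposal is correct and follows essentially the same route as the paper: the upper bound via the pointwise bias--variance decomposition for LPR, with the bi-Lipschitz hypothesis on $\Omega$ supplying both the count $|S_i|\gtrsim (nh)^d$ and the uniform conditioning of the local design (the paper's \lemref{rho} combined with \lemref{local_poly_var} and \lemref{local_poly_ub}), and the lower bound by reduction to the classical H\"older minimax bound of \cite{Korostelev_Tsybakov93}. The only nit is notational: the index set for the MO fit should be $\{j: x_j \in B(x_i,h)\cap\Omega\}$ (the search window of radius $h$), not $\patch_i\cap\Omega$, since $\patch_i$ denotes the photometric patch of width $h_{\patch}$.
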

\medskip

The lower bound is a well-known minimax bound~\cite[Theorem 5.1.2, p.\
133]{Korostelev_Tsybakov93}.  If we consider a class of piecewise
polynomial functions, then this oracle estimator, without spatial
proximity (\ie $h = \infty$), achieves the parametric rate of
$\sigma^2/n^d$.  It is worth noting that LPR plays a crucial role
here.  Indeed, the window around a point near the discontinuity ---
comprised of all points belonging to the same side of the
discontinuity --- will be irregularly shaped. For instance, imagine a
point on a linear surface adjacent to the discontinuity. For a
symmetric window (sufficiently small not to include the
discontinuity), the linear variations around the pixel of interest
will average out and we can accurately estimate the pixel value.  For
an {\em asymmetric} window caused by the discontinuity, the linear
variations will not average out, inducing a small bias and leading to
a higher risk of order $(\sigma^{2}/n^{d})^{3/(d+3)}$ when $\alpha \ge
3/2$.  This phenomenon can be observed in practice and is illustrated
in \figref{bias}.

\begin{figure}[t]
\centering
\includegraphics[width=0.71\linewidth]{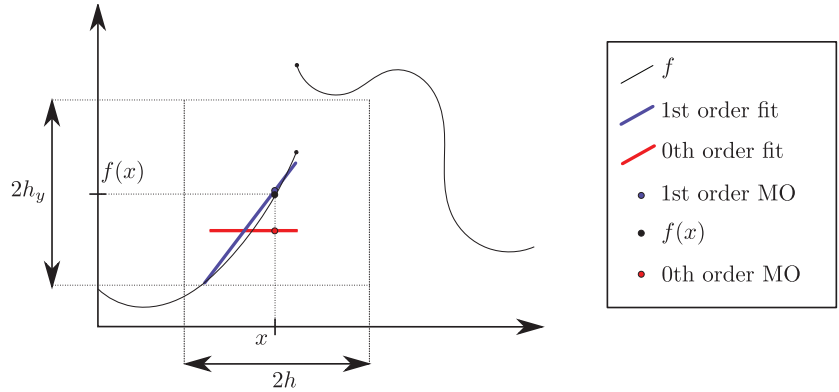} 
\caption{\textit{Membership oracles} of order 0 and 1 on a non-noisy 1D signal. Note how the bias is reduced by
going to the order 1.}
\label{fig:bias}
\end{figure}

Note that the oracle only has to provide the membership information
locally, within the searching window.  The insight we get from this is
that we only need to know over which pixel values to average to attain
the same error rate as we would without discontinuities.  This is
exactly what adaptive kernel methods~\cite{Lepski_Mammen_Spokoiny97},
including patch-based methods, PDE
methods~\cite{Perona_Malik90,Alvarez_Mazorra94,Gilboa_Osher07} and
graph diffusion methods
\cite{Szlam_Maggioni_Coifman08,Singer_Shkolniksy_Nadler09} aim at
doing.

\subsection{Variable bandwidth kernel methods}
\label{sec:variable}

\sloppypar These
methods~\cite{Muller_Stadtmuller87,Spokoiny98,Katkovnik99,Katkovnik_Egiazarian_Astola02,Katkovnik_Foi_Egiazarian_Astola04},
including Lepski's method~\cite{Lepski_Mammen_Spokoiny97} and
variants~\cite{Polzehl_Spokoiny00,Polzehl_Spokoiny03}, choose the
bandwidth adaptively at every location, the goal being to avoid
smoothing over discontinuities and to adapt to the regularity of the
signal when unknown.  Wavelet shrinkage methods are often thought to
perform some sort of variable-bandwidth kernel
smoothing~\cite{Donoho_Johnstone94}.  
Clearly, we cannot do better than if we knew the
discontinuity, meaning if we had access to the \textit{membership
  oracle}.  In that case, at each point we would choose the bandwidth
equal to its distance to the discontinuity (BO below stands for
\textit{bandwidth oracle}). See \figref{windows_oracles} for a
comparison of the MO and BO spatial supports.

\medskip
\begin{thm} \label{thm:BO} Let $\wh{\bbf}_h^{\rm BO}$ denote LPR
  estimator \eqref{local_poly} with weights chosen as in
  \eqref{eq:kernel_weights} when $\|x_i - x_j\|_\infty <
  \dist(x, \partial \Omega):=\displaystyle\inf_{y \in \partial \Omega}
  \|x - y\|_{\infty}$, and set to zero otherwise.  We have
\[
\inf_h \cR_n(\wh{\bbf}_h^{\rm BO}, \cF^{\rm cartoon}(\alpha, \cf)) \asymp \cR^{\rm BO} := (A_n \sigma^2/n) \vee (\sigma^{2}/n^{d})^{2\alpha/(d+2\alpha)},
\]
where $A_n = \log n$ when $d=1$ and $A_n = 1$ when $d \geq 2$, for an
optimal choice of maximal bandwidth $h \asymp h^{\rm MO}$.
\end{thm}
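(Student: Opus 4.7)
\textbf{Proof plan for \thmref{BO}.}
The structural observation is that the bandwidth-oracle weight at $(x_i,x_j)$ is active precisely when $x_j$ lies in the symmetric box of half-width $\rho_i := h \wedge \dist(x_i,\partial\Omega)$ centered at $x_i$, and this box always sits on a single smooth piece ($\Omega$ or $\Omega^c$). Hence at each $x_i$ the estimator is a classical LPR on a H\"older function over a symmetric window of radius $\rho_i$, and I would apply the general local polynomial regression bound proved in \secref{proofs} (and already invoked for \thmref{MO}): squared bias $O(\rho_i^{2\alpha})$ and variance $O(\sigma^{2}/(n\rho_i)^d)$, provided the window contains at least $q = \binom{r+d}{d}$ design points; when it does not, the estimator defaults to $y_i$, contributing $O(\sigma^{2})$ on the $O(n^{d-1})$ set of such pixels, i.e.\ a harmless $O(\sigma^{2}/n)$ after averaging.

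For the upper bound I would stratify $I_n^d$ by distance to $\partial\Omega$, setting $S_k = \{i : k/n \leq \dist(x_i,\partial\Omega) < (k+1)/n\}$. The bi-Lipschitz description $\Omega = \phi(B(0,1))$ bounds the $(d-1)$-Hausdorff measure of $\partial\Omega$ by a constant depending only on $\cf$, so $|S_k| = O(n^{d-1})$ uniformly (and $|S_k| = O(1)$ when $d=1$). Pixels with $k \geq nh$ use $\rho_i = h$ and contribute $O(h^{2\alpha} + \sigma^{2}/(nh)^d)$ to the per-pixel MSE; pixels in $S_k$ for $k < nh$ use $\rho_i \asymp k/n$, yielding
\begin{equation*}
\mse_f(\wh{\bbf}_h^{\rm BO}) \preceq h^{2\alpha} + \frac{\sigma^{2}}{(nh)^{d}} + \frac{1}{n}\sum_{k=1}^{nh}\Bigl[(k/n)^{2\alpha} + \frac{\sigma^{2}}{k^{d}}\Bigr].
\end{equation*}
The $k^{2\alpha}$ partial sum is $O(h^{2\alpha+1})$, absorbed into the interior bias $h^{2\alpha}$. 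The $k^{-d}$ partial sum equals $\log(nh) = O(\log n)$ for $d=1$ and $O(1)$ for $d \geq 2$, producing the $A_n \sigma^{2}/n$ term. Minimizing $h^{2\alpha} + \sigma^{2}/(nh)^d$ at $h \asymp h^{\rm MO}$ completes the upper bound.

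For the matching lower bound, the $(\sigma^{2}/n^{d})^{2\alpha/(d+2\alpha)}$ rate is inherited from \thmref{MO}: the BO uses strictly less information than the MO (its window is the MO window intersected with the ball of radius $h$), so its worst-case risk cannot be smaller. For the $A_n\sigma^{2}/n$ part, any $f \in \cF^{\rm cartoon}(\alpha,\cf)$ whose boundary crosses the grid forces the BO window at a pixel at distance $k/n$ from $\partial\Omega$ to contain $O(k^{d})$ observations, so the variance at such a pixel is at least a constant times $\sigma^{2}/k^{d}$; summing this unavoidable variance over $S_k$ for $k = 1,\dots,nh$ produces $A_n\sigma^{2}/n$ (a harmonic sum when $d=1$, the single closest stratum suffices when $d \geq 2$).

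The principal obstacle is the boundary geometry: establishing $|S_k| = O(n^{d-1})$ uniformly from only the bi-Lipschitz hypothesis on $\phi$, and carefully treating the very few pixels so close to $\partial\Omega$ that LPR is ill-posed, without inflating the rate. The lower-bound construction must also exhibit an $f$ whose boundary populates all distance strata up to $nh$, which is delicate because $h$ is itself the parameter over which we take $\inf_h$.
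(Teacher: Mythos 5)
Your proposal is correct and follows essentially the same route as the paper's proof: uniform bias control $O(h^{2\alpha})$ because the oracle window never straddles $\partial\Omega$, a location-dependent variance $O(\sigma^2/(n\,\dist(x_i,\partial\Omega))^d)$ summed over distance strata of size $O(n^{d-1})$ (the paper uses dyadic shells $Q_\ell$ where you use unit-width shells $S_k$, yielding the same harmonic sum for $d=1$ and convergent sum for $d\ge 2$), and a matching lower bound from the minimax bias bound of \thmref{MO} together with the unavoidable near-boundary variance. The only cosmetic differences are your sharper per-pixel bias $\rho_i^{2\alpha}$ versus the paper's uniform $h^{2\alpha}$, and your slightly informal "less information" justification for inheriting the MO lower bound, which the paper instead phrases as a direct bias lower bound on smooth regions.
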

\medskip

Note that BO achieves the error rate of MO only when $d=1$, when $d=2$
and $\alpha = 1$, or when $d \geq 2$ and $\sigma^2 = O(n^{- 2\alpha
  (1-1/d) + 1})$, which is polynomially small when $d = 2$ and $\alpha
> 1$ or when $d \geq 3$.  Thus in general,  BO is substantially weaker than MO.
That said, BO achieves the minimax rate established in \cite{Tsybakov89} when $\sigma$ is fixed.

\subsection{Yaroslavsky's filter is oracle-optimal under low noise}

As the practitioner knows, YF can be quite good on natural images.  In
fact, it can dramatically outperforms the linear filter and compares
favorably with methods such as wavelet thresholding, particularly when
the noise level is small.  We substantiate this empirical evidence
with a theoretical study of its performance, showing it achieves
 MO bound in such situations (\ie when $\sigma$ is small).

Assume that for a fixed cumulative distribution function $F$, the
noise satisfies the following \beq\label{F} \P(|\eps_i| \le t) \ge
F(t/\sigma), \ \forall t, \ \forall i \in I_n^d, \eeq
  
The following result states that YF achieves a performance comparable
to that of MO if $\sigma$ is small.  We only require
that the noise distribution in \eqref{F} has quickly decaying tails.

\medskip
\begin{thm} \label{thm:Y} Let $\wh{\bbf}_{h, h_y}^{\rm YF}$ denote the
  LPR estimator \eqref{local_poly} with Yaroslavsky's
  weights~\eqref{eq:yaroslavsky_weights}. Suppose that, for some
  constants $C,b>0$, \eqref{F} holds with $1-F(t) \leq C
  \exp(-(t/C)^b)$ for $t$ large enough.  Then there is another
  constant $C' > 0$ such that, if $\sigma \leq (C' \log n)^{-1/b}$,
\[
\inf_{h, h_y} \cR_n(\wh{\bbf}_{h, h_y}^{\rm YF}, \cF^{\rm cartoon}(\alpha, \cf)) \leq (1 + o(1)) \cR^{\rm MO},
\]
where an optimal choice of bandwidths is $h \asymp h^{\rm MO}$ and
$h_y \asymp 1$.  
\end{thm}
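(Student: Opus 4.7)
The plan is to reduce the Yaroslavsky filter to the membership oracle (MO) estimator of \thmref{MO} on a high-probability event. The key observation is that when the noise is small relative to the jump $\mu(f)$, the photometric indicator $L_{h_y}(y_i, y_j) = \one{|y_i - y_j| \le h_y}$ reliably discriminates whether $x_i$ and $x_j$ lie on the same side of $\partial \Omega$; once this identification is in place the YF weights coincide exactly with the MO weights and the bound transfers from \thmref{MO}.

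Concretely, I would first introduce the uniform noise event
\[
\Omega_n = \left\{ \max_{i \in I_n^d} |\eps_i| \le c_0 (\log n)^{1/b} \sigma \right\}.
\]
Combining the Weibull-type tail bound $1 - F(t) \le C \exp(-(t/C)^b)$ with a union bound over $n^d$ pixels gives $\P(\Omega_n^c) \le n^{-c_1}$ for any desired $c_1$, provided $c_0 = c_0(c_1,C,b)$ is taken large enough. I would then set $h \asymp h^{\rm MO} = (\sigma^2/n^d)^{1/(d+2\alpha)}$ and take $h_y$ to be a fixed constant in $(\mu(f)/4, \mu(f)/2)$.

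The next step is to verify weight identification on $\Omega_n$. For any $x_i, x_j$ with $\|x_i - x_j\|_\infty \le h$, the H\"older regularity of $\fin$ and $\fout$, together with a triangle inequality through a point of $\partial \Omega$ on a segment joining $x_i$ to $x_j$, gives $|f_i - f_j| \le C_0 h$ when $x_i,x_j$ lie on the same side of $\partial\Omega$ and $|f_i - f_j| \ge \mu(f) - 2 C_0 h$ when they lie on opposite sides. Combining with $|\eps_i| + |\eps_j| \le 2 c_0 (\log n)^{1/b} \sigma$ on $\Omega_n$, using $h \to 0$ as $n \to \infty$, and the hypothesis $\sigma \le (C' \log n)^{-1/b}$ with $C'$ chosen so that $2 c_0/C' < \mu(f)/8$, one gets $|y_i - y_j| < h_y$ on the same side and $|y_i - y_j| > h_y$ on opposite sides for $n$ large. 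Hence on $\Omega_n$ one has $\wh{\bbf}^{\rm YF}_{h,h_y} = \wh{\bbf}^{\rm MO}_h$ pixel-by-pixel, so by \thmref{MO},
\[
\E\bigl[\|\wh{\bbf}^{\rm YF} - \bbf\|_2^2 \mathbf{1}_{\Omega_n}\bigr]/n^d \le (1+o(1))\, \cR^{\rm MO}.
\]
On $\Omega_n^c$, clipping $\wh{\bbf}$ to $[0,1]$ bounds the pointwise squared error by $1$, so this event contributes at most $n^{-c_1}$ to the MSE; taking $c_1$ larger than the polynomial rate appearing in $\cR^{\rm MO}$ makes this term negligible and completes the proof.

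The principal obstacle is the uniform control of the noise: the $(\log n)^{1/b}$ factor from the union bound is exactly what forces the constraint $\sigma \le (C' \log n)^{-1/b}$, and a weaker tail assumption on $\eps$ would either inflate this constant or require a more delicate averaging-based argument in place of the clean weight-matching sketched above. A secondary subtlety is that the constants in \thmref{MO} must be tracked carefully to obtain the sharp $(1+o(1))$ prefactor rather than merely an $O(1)$ bound, but this reduces to the standard LPR bias-variance calculation over a window whose shape matches the piece of $f$ it intersects, and so is absorbed in the analysis already carried out for the membership oracle.
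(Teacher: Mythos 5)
Your proposal is correct and follows essentially the same route as the paper's proof: condition on a high-probability event where the noise is uniformly bounded by a small constant (the paper uses a per-window maximum with a fixed threshold $t=1/(6\cf)$ rather than your global $c_0(\log n)^{1/b}\sigma$ threshold, but under the hypothesis on $\sigma$ these coincide up to constants), show that on this event the photometric kernel exactly reproduces the membership oracle's weights, and absorb the complement via clipping and the exponential tail bound. The only nit is that $\sigma \le (C'\log n)^{-1/b}$ gives $2c_0(\log n)^{1/b}\sigma \le 2c_0 (C')^{-1/b}$, so the smallness condition should read $2c_0/(C')^{1/b} < \mu(f)/8$; this is immaterial to the argument.
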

\medskip

Gaussian noise satisfies the requirements of \thmref{Y} with $C = \sqrt{2}$
and $b = 2$, resulting in the constraint $\sigma = O(1/\sqrt{\log
  n})$, which is quite mild. This explains why YF tends to
perform well in practice, at least for low noise level.

This excellent performance hinges on the fact that the photometric
kernel is able to mimic the \textit{membership oracle} when the noise
level is small.  When the noise level is of order 1 or larger, this is
no longer true, as illustrated in \figref{disks}.  There, we clearly
see that in a window containing points from both $\Omega$ and its
complement, the pixel values are mixed in the histogram if the noise
level is too large, making a clear separation impossible.  We formally
argue this point after the proof of \thmref{Y} in
\secref{proof-Y}.  

It is worth noting that the proof helps clarify exactly the artifacts
encountered in practice by the YF for strong noise (\lcf
\figref{Sinusoid_sigma=100}). Indeed, the output often looks like the
original scene contaminated by something like ``salt and pepper''
noise. As mentioned in the proof, this is because the YF does not
alter pixels with extreme values.

\subsection{Performance analysis for Non-Local Means}

In the previous section we established that YF performs as well as MO
when the noise level is small, while it is useless otherwise.  A
natural strategy consists of, first, reducing the noise level by
averaging and, then, applying YF.  This is almost exactly what
NLM-average does.  We precisely quantify the MSE performance of both
NLM-average and Euclidean NLM in this section.  Note that we state our
results for i.i.d.~Gaussian noise for simplicity, though they are
valid for many other distribution families such as uniform and
double-exponential.

\medskip
\begin{thm} \label{thm:NLM} Let $\wh{\bbf}_{h, h_y}^{\rm NLM}$ denote
  LPR estimator \eqref{local_poly} with NLM
  weights~\eqref{eq:nlm_weights} and photometric kernel either
  Euclidean \eqref{eq:nlmL} or Average \eqref{eq:nlmML}.  
If the noise conditions of
  Theorem~\ref{thm:Y} hold,
then
\[
\inf_{h, h_y} \cR_n(\wh{\bbf}_{h, h_y}^{\rm NLM}, \cF^{\rm
  cartoon}(\alpha, \cf)) \leq
(1 + o(1)) \cR^{\rm MO},
\]
where $h_{\patch} = 1/n$. Otherwise, assuming
  $\sigma$ is bounded away from 0, we have
\[
\inf_{h, h_y} \cR_n(\wh{\bbf}_{h, h_y}^{\rm NLM}, \cF^{\rm
  cartoon}(\alpha, \cf)) \preceq 
\cR^{\rm NLM} := (B_n/n) \vee
(\sigma^{2}/n^{d})^{2\alpha/(d+2\alpha)},
\]
where $B_n := (\sigma^{4} \log n)^{1/d}$ (Euclidean) or $:=
(\sigma^{2} \log n)^{1/d}$ (Average), and an optimal choice of
bandwidths is $h \asymp h^{\rm MO}$, $h_y \asymp h_y^{\rm NLM} :=
\sigma^3 \sqrt{\log n}$ (Euclidean) or $:= (2\cf)^{-d} \mu/2$
(Average), and $h_{\patch} \asymp h_{\patch}^{\rm NLM} := B_n/n$.
\end{thm}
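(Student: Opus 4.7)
The plan is to treat the low-noise and general-noise regimes separately, and in each case reduce the analysis of NLM to that of a simpler estimator already treated in the paper. In the low-noise regime (where the hypotheses of \thmref{Y} hold), the strategy is to take $h_{\patch} = 1/n$, so that each patch contains a single pixel; in that case both the Euclidean photometric kernel~\eqref{eq:nlmL} and the Average photometric kernel~\eqref{eq:nlmML} collapse to the photometric kernel of Yaroslavsky's filter. The NLM estimator then coincides exactly with $\wh{\bbf}_{h,h_y}^{\rm YF}$, and the bound $(1+o(1))\cR^{\rm MO}$ follows immediately from \thmref{Y}.

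For the general regime (with $\sigma$ bounded away from $0$), the plan is a bias--variance argument that compares NLM with the membership-oracle estimator of \thmref{MO}, complemented by a trivial bound on a thin boundary zone. Let $\cZ := B(\partial \Omega, h_{\patch})$ be the tube of width $h_{\patch}$ around the discontinuity. Since $\wh{\bbf}$ is clipped to $[0,1]$, each pixel contributes at most $O(1)$ to the squared error, and by the bi-Lipschitz assumption on $\Omega$ the number of sample points of $I_n^d$ inside $\cZ$ is $O(n^d h_{\patch})$. Dividing by $n^d$ yields the contribution $O(h_{\patch}) = O(B_n/n)$, which supplies the first term in $\cR^{\rm NLM}$.

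The core of the argument concerns pixels $x_i$ whose patch $\patch_i$ lies entirely on one side of $\partial \Omega$. For these I would show that, with probability at least $1 - n^{-C}$ uniformly in pairs $(i,j)$ with $x_j$ in the search window of $x_i$, the photometric kernel $L_{h_y}$ equals the membership indicator $\1\{x_i, x_j \text{ on the same side of } \partial\Omega\}$. For the Average variant, the statistic $\ypbari - \ypbarj$ is Gaussian with standard deviation $\sqrt{2}\,\sigma/(nh_{\patch})^{d/2}$ and mean $\fpbari - \fpbarj$; the H\"older regularity of $\fin$ and $\fout$ together with a Taylor expansion give $|\fpbari - \fpbarj| \leq C h^{\alpha}$ on the same side, while the jump condition~\eqref{diff} combined with H\"older control of $\fin - \fout$ gives $|\fpbari - \fpbarj| \geq \mu/(2(2\cf)^d)$ on opposite sides, provided $h$ is small. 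Taking the hidden constant in $h_{\patch} \asymp B_n/n$ large enough makes the per-pair Gaussian deviation at most $\mu/(8(2\cf)^d)$ with probability $1-n^{-C}$, so a union bound over the $O(n^{2d})$ pairs shows that the threshold $h_y \asymp \mu$ cleanly separates the two cases. For the Euclidean variant, I would run the analogous argument on $\|\bypi - \bypj\|_2^2$, centring it at its same-side expectation $2\sigma^2(nh_{\patch})^d$ and controlling its fluctuation via sub-exponential/$\chi^2$ concentration; the resulting window of size $\asymp \sigma^2 (nh_{\patch})^{d/2} \sqrt{\log n}$ drives the choice $h_y \asymp \sigma^3 \sqrt{\log n}$ and the extra $\sigma^2$ factor in the Euclidean version of $B_n$.

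On this favourable event, the LPR weights of NLM and those of MO differ only on the set of $x_j$ in the search window whose own patch straddles $\partial\Omega$. That set has volume $O(h_{\patch} h^{d-1})$ within a search window of volume $h^d$, so the fraction $h_{\patch}/h$ is $o(1)$ for the chosen bandwidths. Invoking the general LPR lemma promised in \secref{proofs}, this perturbation affects the LPR bias and variance only to leading order in $h_{\patch}/h$, and the resulting MSE on the bulk inherits the rate $\cR^{\rm MO} = (\sigma^2/n^d)^{2\alpha/(d+2\alpha)}$ from \thmref{MO}, with $h \asymp h^{\rm MO}$. Summing the two contributions yields $\cR^{\rm NLM}$ up to constants. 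The main obstacle I expect is the uniform concentration step: controlling $L_{h_y}$ simultaneously over all $O(n^{2d})$ pairs while keeping $h_y$ below the signal $\mu$. The bookkeeping of constants is what forces the specific form of $h_{\patch}^{\rm NLM}$ and distinguishes Euclidean from Average, and the treatment of patches that are close to but do not cross $\partial\Omega$ (where $\fpbarj$ interpolates between $\fin$ and $\fout$) is a secondary subtlety motivating the inclusion of $\cZ$ in the trivially-bounded zone.
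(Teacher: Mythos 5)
Your proposal follows essentially the same route as the paper: reduce the low-noise case to Yaroslavsky's filter by taking single-pixel patches, trivially bound the $h_{\patch}$-tube around $\partial\Omega$ to get the $B_n/n$ term, use Gaussian (resp.\ noncentral $\chi^2$) concentration to show the photometric kernel mimics the membership oracle outside that tube, and then invoke the LPR bias/variance lemmas with $h \asymp h^{\rm MO}$. Three small points where your sketch and the paper diverge or where you should be careful. First, your same-side signal bound $|\fpbari - \fpbarj| \le C h^{\alpha}$ is wrong for $\alpha > 1$ (the mean of $f$ over two patches a distance $h$ apart differs by order $h$, not $h^\alpha$); the paper uses the Lipschitz bound $\cf h$, and since only $o(1)$ is needed against $\mu \asymp 1$ this is harmless, but state it as $O(h)$. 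Second, for the Euclidean kernel the $\chi^2$ structure of $\|\bypi-\bypj\|_2^2$ requires $\patch_i \cap \patch_j = \emptyset$; the paper handles this by restricting to $\dist(x_i,\partial\Omega) > 2h_{\patch}$ and noting that opposite-side patches are automatically disjoint, and you would need the same restriction. Third, your "perturbation" argument for the $x_j$ whose patches straddle $\partial\Omega$ is the least rigorous step: those points are included or excluded \emph{randomly}, so the regression set $A_i$ is noise-dependent. The paper resolves this not by a stability-of-LPR argument but by conditioning on the realized set, observing that on the good event $A_i$ is sandwiched between a deterministic inner ball $B_i^0$ (of radius $\asymp h$, lying on the correct side) and the full window, and that Lemmas~\ref{lem:local_poly_var}--\ref{lem:local_poly_ub} apply uniformly to every such deterministic set; you should make this sandwiching explicit rather than appeal to a leading-order expansion in $h_{\patch}/h$.
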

\medskip
In other words, if the low-noise conditions of
  Theorem~\ref{thm:Y} hold, then the optimal patch size is a single
  pixel, and the NLM is exactly YF and we achieve the YF performance
  bound. There is an elbow in the performance bound, since once the
  optimal patch size exceeds a single pixel, estimation errors within
  a patch sidelength of the boundary impact the performance.

There is a strong correspondence between this bound and the
BO bound in \thmref{BO}.  If $\sigma$ is fixed, $\cR^{\rm NLM} \asymp \cR^{\rm BO}$ for
$d=1$ and $\cR^{\rm NLM} \asymp (\log n)^{1/d} \cR^{\rm BO}$ for
$d\geq 2$, therefore $\cR^{\rm BO}$ is the minimax rate \cite{Tsybakov89} and NLM is minimax optimal up to a logarithmic factor. 

Note that our bandwidth $h$ is not infinite as in
\cite{Maleki_Narayan_Baraniuk11}. There, the authors use an infinite
window for searching for matching patches: this is optimal in their
setting because they consider piecewise {\em constant} images.  In our
setting, images are piecewise smooth, and a smaller bandwidth can not
only help us reduce the risk of our estimate, but also lead to more
computationally efficient estimation algorithms.


\section{Performance analysis for thin features and textures}
\label{sec:thin-pattern}

In the cartoon model of \secref{setting} with JNR of order 1, the
performance of NLM is comparable to that of variable bandwidth kernel
smoothing, and actually that of wavelets as well
\cite{Kervrann_Boulanger06,Portilla_Strela_Wainwright_Simoncelli03}.
In natural images, however, NLM can perform substantially better.  We
explain this by the fact that the cartoon model we considered so far,
though useful as a benchmark, does not account for features common in
natural images, particularly, very thin regions a
few pixels wide and regular textures.

Below, we do as if the image contained regions of cartoon type and
regions with thin features and/or texture, and keep the same
bandwidths that we found to be optimal in the cartoon model in the
previous results.

\subsection{Thin features}

Both YF and NLM achieve a good performance on thin features.  We focus
on sample points within the feature and focus on the interesting case where the
thickness is of smaller order of magnitude than the bandwidth $h$.  \medskip
\begin{thm} \label{thm:thin} Consider $f \in \cF^{\rm thin}(\alpha,
  \cf, d_0, a)$ with band $\Omega$; assume all parameters are fixed
  except $a \ge 4/n$ and $a \rightarrow 0$ as $n \rightarrow
  \infty$ . In
  terms of {\em point-wise risk} \eqref{bias-var} at $x_i \in \Omega$, we have: \benum
\item The linear filter with bandwidth $h^{\rm LF}$ has risk of order
  1 if $a = o(h^{\rm LF})$.
\item BO with maximal bandwidth $h^{\rm MO}$ has a point-wise risk of
  order $a^{2\alpha} \vee \sigma^2 (na)^{-d}$, if $\dist(x_i,
  \Omega^c) \ge a/C$ for some $C>3$, if $n a \to \infty$.
\item MO with bandwidth $h^{\rm MO}$ has risk of order $(h^{\rm MO}/a)^{d-d_0} \cR^{\rm MO}$ if $a = o(h^{\rm MO})$. 
\item The latter is true of YF with bandwidths $h^{\rm MO}, h_y \asymp 1$, if the noise satisfies the conditions of \thmref{Y}.
\item This is also the case of NLM (Euclidean or Average) with
  bandwidths $h = h^{\rm MO}, h_y = h_y^{\rm NLM}$, and patch size
  $h_{\patch} = h_{\patch}^{\rm NLM}$, if $\dist(x_i, \Omega^c) \ge
  h_{\patch}^{\rm NLM}$.  \eenum
\end{thm}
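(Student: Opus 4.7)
The five items correspond to the five estimators analyzed in Section~4 (cartoon case), restricted to a sample point $x_i$ inside the thin band $\Omega$. The strategy is the same in each case: identify the set of points that enter the local polynomial fit, bound the bias from Taylor expansion of the H\"older pieces, and bound the variance by the inverse of the effective sample size. The novelty compared to the cartoon analysis is geometric: near $x_i$, $\Omega$ has $d$-volume of order $h^{d_0} a^{d-d_0}$ in a box of side $h$, so that sums over $\Omega \cap \patch_i$ scale differently from the cartoon case.

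For (1), the box window of bandwidth $h^{\rm LF}$ centered at $x_i \in \Omega$ contains $\asymp (n h^{\rm LF})^d$ sample points, of which only $\asymp (n h^{\rm LF})^{d_0} (na)^{d-d_0}$ lie in $\Omega$, since locally $\Omega$ is a Lipschitz thickening of a $d_0$-surface. Because $a = o(h^{\rm LF})$, the fraction of in-$\Omega$ points vanishes. Hence the target of the LPR at $x_i$ is driven by $\fout$, and combining with the jump condition $\mu(f) \asymp 1$ the squared bias is of order $1$, dominating the variance. For (2), every $x_i \in \Omega$ satisfies $\dist(x_i, \partial \Omega) \le a/2$, so the bandwidth used by BO at $x_i$ is bounded by $a/2 \le h^{\rm MO}$; under the hypothesis $\dist(x_i, \Omega^c) \ge a/C$ it is also bounded below by $a/C$, hence of order $a$. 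The resulting window lies entirely in $\Omega$ and contains $\asymp (na)^d$ points (using $na \to \infty$), so LPR on the H\"older-$\alpha$ function $\fin$ delivers squared bias $O(a^{2\alpha})$ and variance $O(\sigma^2/(na)^d)$.

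For (3), the membership oracle restricts the LPR window at $x_i$ to $\Omega \cap \patch_i$ with $\patch_i$ of side $h^{\rm MO}$; since $\Omega$ is a thickening of width $a$ of a smooth $d_0$-manifold, this intersection contains $\asymp (n h^{\rm MO})^{d_0} (na)^{d-d_0}$ sample points whenever $a \le h^{\rm MO}$. Mirroring the proof of \thmref{MO}, the LPR over the smooth piece $\fin$ gives squared bias $O((h^{\rm MO})^{2\alpha})$ and variance $\asymp \sigma^2 / [n^d (h^{\rm MO})^{d_0} a^{d-d_0}]$, which equals $\cR^{\rm MO} \cdot (h^{\rm MO}/a)^{d-d_0}$ by the definition of $h^{\rm MO}$ and dominates the bias. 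For (4), I reuse the proof of \thmref{Y}: under the tail condition and $\sigma \le (C' \log n)^{-1/b}$, the photometric kernel with $h_y \asymp 1$ agrees, uniformly over pixels in the spatial window, with the membership oracle on a high-probability event; on that event YF coincides with MO and the bound from (3) carries over. For (5), the assumption $\dist(x_i, \Omega^c) \ge h_{\patch}^{\rm NLM}$ forces $\bypi$ to lie fully inside $\Omega$, so its mean (respectively its $\ell_2$ distance to a nearby in-$\Omega$ patch) concentrates exactly as in the cartoon analysis of \thmref{NLM}; the NLM photometric kernel with the stated bandwidths then agrees with the membership oracle across the search window with overwhelming probability, and the LPR bound from (3) applies again.

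The main technical obstacle is the bias control in (3), which then propagates to (4) and (5). The LPR window $\Omega \cap \patch_i$ is anisotropic --- of size $h$ along $d_0$ tangential directions and only $a \ll h$ along the $d-d_0$ normal directions --- so one must show that the associated Gram matrix remains well-conditioned (so that Lemma on local polynomial approximation continues to yield the Taylor-remainder bias $O(h^{\alpha})$), and that the effective sample size is indeed $(nh)^{d_0}(na)^{d-d_0}$. It is precisely this anisotropy that produces the multiplicative factor $(h^{\rm MO}/a)^{d-d_0}$ in the variance and hence in the stated point-wise risk.
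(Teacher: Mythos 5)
Your proposal follows the same route as the paper: every item is reduced to the behavior of LPR on the set of points actually entering the fit, with the volume/count estimate $\#\{j: x_j\in B(x_i,h)\cap\Omega\}\asymp n^d h^{d_0}a^{d-d_0}$ doing the work for items (3)--(5), a ball of radius $\asymp a$ contained in $\Omega$ for BO, and an order-one bias (the paper invokes \lemref{local_poly_lb} with cosmetic changes) for LF. All the scalings you report agree with the paper's.

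The one genuine gap is the point you flag at the end and then leave open. For items (3)--(5) you need $\lambda_{\rm min}(\bZ^T\bZ)\succeq n^d h^{d_0}a^{d-d_0}$ for the \emph{anisotropic} window $B(x_i,h)\cap\Omega$: both the variance bound ($\Var(\wh{f}_i)=\sigma^2\be^T(\bZ^T\bZ)^{-1}\be\le \sigma^2/\lambda_{\rm min}$) and the bias bound of \lemref{local_poly_ub} (which divides $\|\bZ^T\bg\|_2=O(|A|\,h^\alpha)$ by $\lambda_{\rm min}$) hinge on it, and \lemref{local_poly_var} as stated does not apply because it requires the window to contain a ball of radius comparable to $h$, which fails when $a=o(h)$. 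The paper closes this with a short packing argument: take a maximal $a$-packing $u_1,\dots,u_m$ of $B(x_i,h)\cap\Omega$ with $m\asymp(h/a)^{d_0}$, so that the balls $B(u_k,a)$ are disjoint and contained in $\Omega$; then $\bZ^T\bZ\succ\sum_{k}\bZ_k^T\bZ_k$ with $\bZ_k$ the design matrix restricted to $B(u_k,a)$, and each summand has smallest eigenvalue $\gtrsim(na)^d$ by \eqref{ZZ}, giving $\lambda_{\rm min}(\bZ^T\bZ)\gtrsim m(na)^d=n^dh^{d_0}a^{d-d_0}$. Without this (or an equivalent conditioning argument), knowing that the effective sample size is $(nh)^{d_0}(na)^{d-d_0}$ does not by itself yield the claimed variance, since a degenerate design could make $\be^T(\bZ^T\bZ)^{-1}\be$ much larger than the reciprocal of the sample size. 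Everything else in your proposal matches the paper's proof.
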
  
\medskip

In view of this result, we can say that linear filtering essentially
erases the feature. In contrast, YF still performs very well (relative
to the MO) under low
noise, and NLM performs well in this case and for higher noise
settings. Note that when $h_{\patch}^{\rm NLM} = o(a)$, the bound
above holds for most points within the thin features.  Though not
stated here, we found that NLM is able to handle such bands under
special circumstances --- when $d \ge 3$ and the band is straight.

\subsection{Regular patterns and textures}
We consider very general patterns where YF will do as well as in the
cartoon model, situations where most other methods are essentially
useless.  Euclidean NLM performs well too, under additional
assumptions on the pattern.  \medskip
\begin{prp} \label{prp:pattern} Consider $f \in \cF^{\rm
    pattern}(\alpha, \cf, a)$ with all parameters are fixed except for
  $a$, which satisfies $a = o(h^{\rm MO})$. Let $N_\Omega :=
 \# \{i: x_i \in \Omega\}$ and $N_{\Omega^c}$ is defined similarly.
  If $N_\Omega \vee N_{\Omega^c}
  \le C (N_\Omega \wedge N_{\Omega^c})$ and $na \ge (r+1)(2C+2)$, with $C > 1$ fixed, we have the
  following: \benum
\item MO with $h = h^{\rm MO}$ achieves an MSE of order
  $\cR^{\rm MO}$.
\item The latter is true of YF with bandwidths $h^{\rm MO}, h_y \asymp
  1$, if the noise satisfies the conditions of \thmref{Y}.
\item Suppose in addition that for every $x_i \in \Omega$ and $x_j \in
  \Omega^c$, 
\beq \label{pattern-NLM} 
\|\1 (\patch_i \cap \Omega) - \1 (\patch_j \cap \Omega)\|_2^2 \ge (\sigma^2 \log
  n)/C', 
\eeq 
for some $C' > 1$ fixed.  Then  (Euclidean) NLM  with
  bandwidths $h = h^{\rm MO}, h_y = h_y^{\rm NLM}$ and patch size
  $h_{\patch}^{\rm NLM}$, achieves an MSE of order $(na)^d \, \cR^{\rm MO}$.
  \eenum
\end{prp}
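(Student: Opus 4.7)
I plan to prove the three claims in sequence, always with $h = h^{\rm MO}$. Two structural facts will be used repeatedly: since $a = o(h^{\rm MO})$, every window of radius $h$ contains many complete $a$-cells of the pattern; and the hypotheses $N_\Omega \vee N_{\Omega^c} \le C (N_\Omega \wedge N_{\Omega^c})$ together with $na \ge (r+1)(2C+2)$ imply that within any window of radius $h$ there are of order $(nh)^d$ design points on each side of $\partial\Omega$ and that a degree-$r$ polynomial regression is well-conditioned on either side.

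\textbf{Parts (i) and (ii).} For (i), take $x_i \in \Omega$; the case $x_i \in \Omega^c$ is symmetric. The MO-LPR estimator uses only the $\Omega$-pixels inside the window. A standard LPR bias/variance bound, which I will prove as a lemma in \secref{proofs} for any design that meets the quasi-uniformity condition above, gives squared bias $O(h^{4\alpha})$ from the Taylor expansion of $\fin \in \cH_d(\alpha,\cf)$ and variance $O(\sigma^2/(nh)^d)$; taking $h = h^{\rm MO}$ yields $\cR^{\rm MO}$. For (ii), I will mimic the proof of \thmref{Y}: under its noise-tail assumption, a union bound over the $O(n^d)$ pixel pairs inside a window of radius $h^{\rm MO}$ shows that, with probability $1-o(1)$, the Yaroslavsky photometric kernel coincides with the membership-oracle selector for every such pair. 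On this event the YF-LPR estimator equals the MO-LPR estimator pointwise, and the bound from (i) is inherited.

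\textbf{Part (iii).} This is the delicate claim. The structural hypothesis \eqref{pattern-NLM}, combined with the jump lower bound $\mu \ge 1/\cf$ and the smoothness of $\fin, \fout$, ensures that every cross-boundary pair $(x_i, x_j) \in \Omega \times \Omega^c$ has noiseless patches satisfying $\|\bbfpi - \bbfpj\|_2^2 \ge c \, \sigma^2 \log n$ for some constant $c > 0$; on the other hand, the periodicity of $\Omega$ forces $\bbfpi = \bbfpj$ whenever $x_i, x_j$ lie on the same side and $x_j - x_i \in a\Z^d$. A Gaussian concentration estimate applied to the $\chi^2$-type noise term $\|\bepi - \bepj\|_2^2$ (of dimension $(n h_{\patch}^{\rm NLM})^d$), followed by a union bound, yields an event of probability $1-o(1)$ on which the Euclidean NLM weights vanish on every cross-boundary pair and equal one on every period-translated same-side pair inside the window of radius $h^{\rm MO}$. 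The crucial counting step is that the number of period-translated partners of $x_i$ inside the window is only $\asymp (h^{\rm MO}/a)^d$, a factor $(na)^d$ smaller than the $(nh^{\rm MO})^d$ used in part (i). Substituting this reduced effective sample size into the variance term produces $(na)^d \sigma^2 / (nh^{\rm MO})^d = (na)^d \, \cR^{\rm MO}$, which dominates the squared bias $(h^{\rm MO})^{4\alpha} = o(\cR^{\rm MO})$ and yields the stated rate.

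\textbf{Main obstacle.} The genuine work concentrates in (iii) and is twofold. First, the LPR design is now restricted to a \emph{periodic sub-lattice} of the window rather than the full grid, so one must verify that its centered-moment Gram matrix remains well-conditioned; this is where $na \ge (r+1)(2C+2)$ is used, to guarantee that the sub-lattice carries enough distinct nodes to interpolate any polynomial of degree $r$. Second, one must control the bias coming from weights equal to one for pairs that are only approximately period-aligned (near cell boundaries); the upper bound on the patch size $h_{\patch}^{\rm NLM}$ keeps the fraction of such stray matches negligible, so their bias contribution is absorbed into the $\cR^{\rm MO}$ budget.
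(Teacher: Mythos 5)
Your part (iii) tracks the paper's argument closely and the key counting there is right: the effective neighborhood is the set of $\asymp (h^{\rm MO}/a)^d$ period-translates of $x_i$, which inflates the variance to $(na)^d\,\sigma^2/(nh^{\rm MO})^d$. The genuine gap is in parts (i) and (ii), where you dismiss the central difficulty. You claim that a ``standard LPR bias/variance bound'' applies to ``any design that meets the quasi-uniformity condition'' of having $\asymp(nh)^d$ points on each side of $\partial\Omega$. A cardinality condition alone does not lower-bound $\lambda_{\rm min}(\bZ^T\bZ)$ once $r\ge 1$: the set $\Xi$ in Definition~\ref{def:pattern} is an \emph{arbitrary} subset of $(0,a)^d$ subject only to the global ratio constraint, so $\Omega\cap B(x_i,h)$ is a union of translates of an arbitrary set, and \lemref{local_poly_var} (which requires a full discrete ball inside the design) does not apply. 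This is exactly where both hypotheses of the proposition are consumed in the paper's proof: the Gram matrix is decomposed over the $a$-cells and then over one-dimensional grid lines $L_{j'}$; the ratio condition guarantees that a positive fraction of the lines carry at least $[na]/(2C+2)$ points, the condition $na\ge(r+1)(2C+2)$ guarantees this count exceeds $r+1$, and a Vandermonde eigenvalue bound (\lemref{vandermonde}) then yields $\lambda_{\rm min}(\bZ^T\bZ)\gtrsim(nh)^d$. You instead spend the assumption $na\ge(r+1)(2C+2)$ on conditioning the sub-lattice design in part (iii), where it is not really needed: that design is a regular grid of spacing $a$ inside $B(x_i,h)$ and the usual Riemann-sum argument applies. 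To repair (i)--(ii) you would need either the paper's line/Vandermonde argument or an explicit cell-level Riemann-sum argument exploiting $a=o(h^{\rm MO})$ and the periodicity to show $(nh)^{-d}\bZ^T\bZ\succeq c\,\bM$; neither is supplied by your quasi-uniformity lemma as stated.

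A smaller point: the squared bias of the degree-$r$ LPR over an $\alpha$-H\"older piece is $O(h^{2\alpha})$, not $O(h^{4\alpha})$; at $h=h^{\rm MO}$ it is of the same order as the variance, namely $\cR^{\rm MO}$, rather than negligible. This does not change the stated conclusions, but the exponent as written is wrong.
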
  
\medskip

The condition \eqref{pattern-NLM} essentially means that any
  two patches, where on is centered in the foreground and the other is
  centered in the background, must be sufficiently 
  distinct -- and the necessary degree of distinction increases with
  the noise level.
For instance, \eqref{pattern-NLM} is satisfied by such patterns as a
chessboard or stripes.  A regular pattern in a real image (\eg
Barbara's blouse) is often referred to as texture, and NLM is able to
effectively denoise such patterns under some regularity conditions.
For random models of textures, such as Markov random fields, we do not
expect NLM to do well unless the pattern is not very random.  The
reason is that few patches are close in Euclidean distance to a given
patch.


\section{Experiments}
\label{sec:experiments}

In this section we provide numerical results for images with
$d=2$, whose pixel intensities are between $0$ and $255$. In our
experiments, the noise is Gaussian with standard deviation $\sigma \in
\{5, 20, 50,100\}$ (Note that this corresponds, for normalized images in
$[0,1]$ to noise with $\sigma\in \{5/255,10/255,50/255,100/255\}$).

On both toy and classical images, we have compared the behavior of the
following methods: linear filtering (LF), Yaroslavsky's filter (YF),
Euclidean NLM (NLM), average non-local means (\NLMm) and
the membership oracle (MO).  In all cases we have implemented LPR
version of the methods for the orders $r \in\{0, 1, 2\}$. Note that,
as expected, for linear filtering LPR of order 0 and 1 are exactly
identical because the support of the kernel is symmetric.  However,
for other methods the symmetry of the support is no longer guaranteed
and the estimators differ.  The higher order LPR versions are computed
by solving the linear system in \eqref{eq:LPR_system}. A small
numerical constant ($10^{-8}$) is added to the diagonal elements of
$\bX^T \bX$ so that inverting this matrix is always a well conditioned
problem.

For fair comparisons we have used the same box kernel with every method.
The patch size is $7 \times 7$ (\ie $h_{\patch}=7$). It
is kept fixed for all the methods.  For the
spatial bandwidth $h$ we have chosen to use the values obtained by
considering the best $h$ (in term of MSE) for the MO, on the Bowl image.
Thus, we use for each noise level $\sigma\in\{5, 20, 50, 100\}$ and polynomial
order $r\in\{0,1,2\}$ an $h$ optimized on Bowl.  The values are provided by the
MSE optimization in \figref{searching_zone_evolution} and summarized
in Tab.~\ref{tab:optimal_bandwidth}.
The photometric bandwidth $h_y$ is chosen by hand, and differs from
method to method: $\sqrt{10}\sigma$ (YF), $0.29\sigma$
(\NLMm), $13.1 \sigma$ (NLM), $30$ (MO). It
is to be noted that the parameters are given for comparison in between
methods, we do not claim those are the best parameters for all applications.

\begin{figure}[tbp]
\centering
\subfigure[$\sigma=5$]{\includegraphics[width=.40\linewidth]{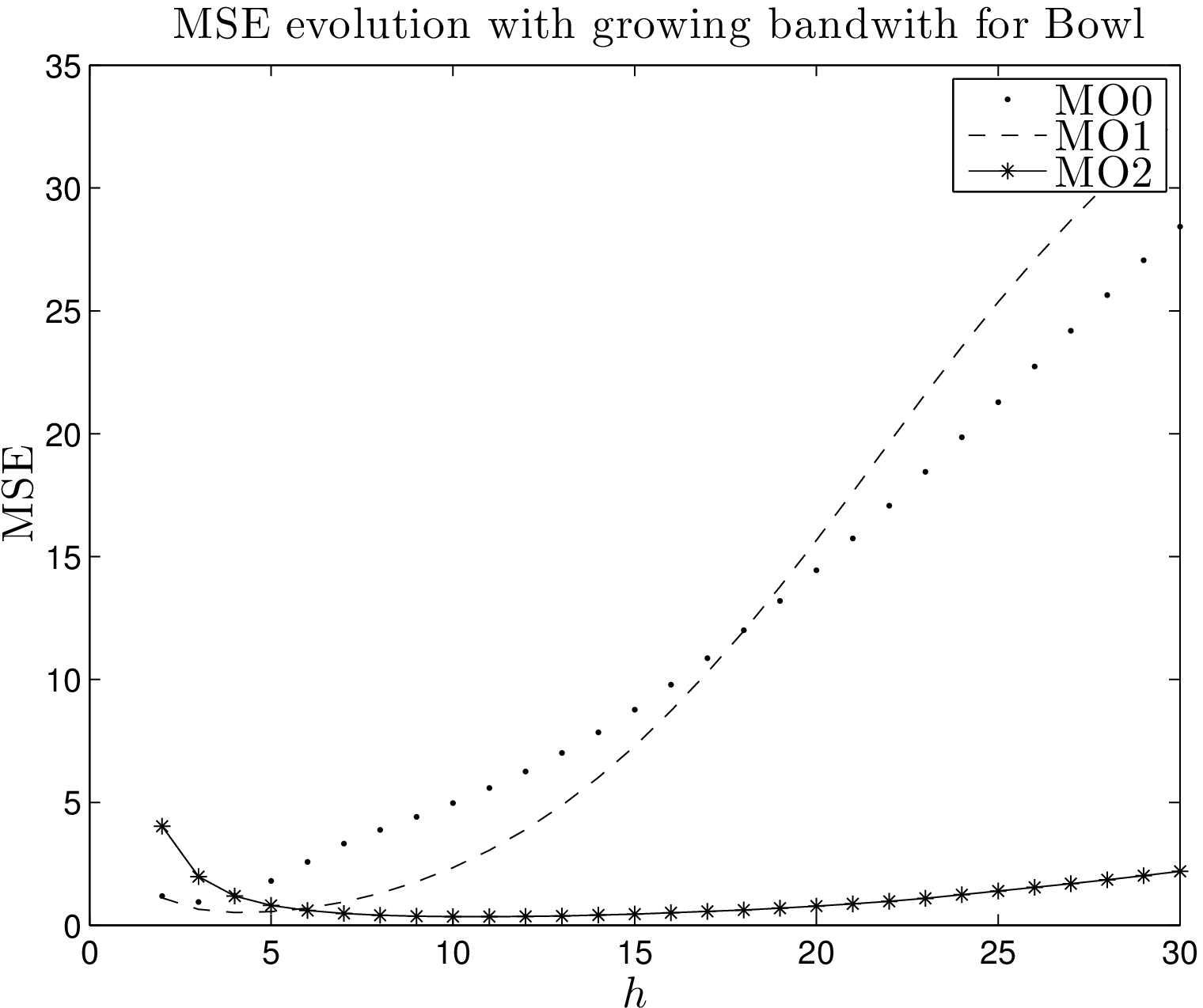}}
\subfigure[$\sigma=20$]{\includegraphics[width=.40\linewidth]{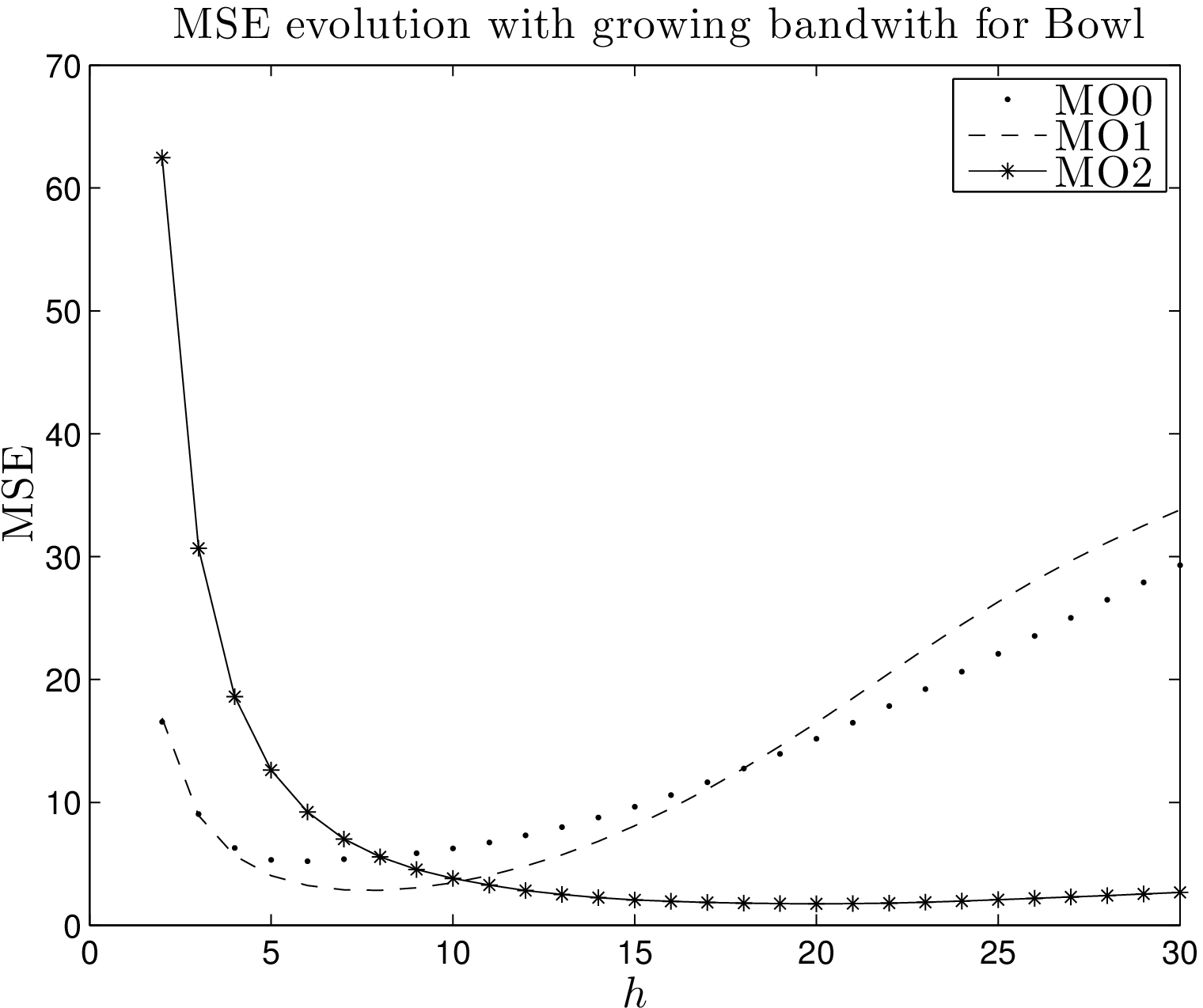}}
\subfigure[$\sigma=20$]{\includegraphics[width=.40\linewidth]{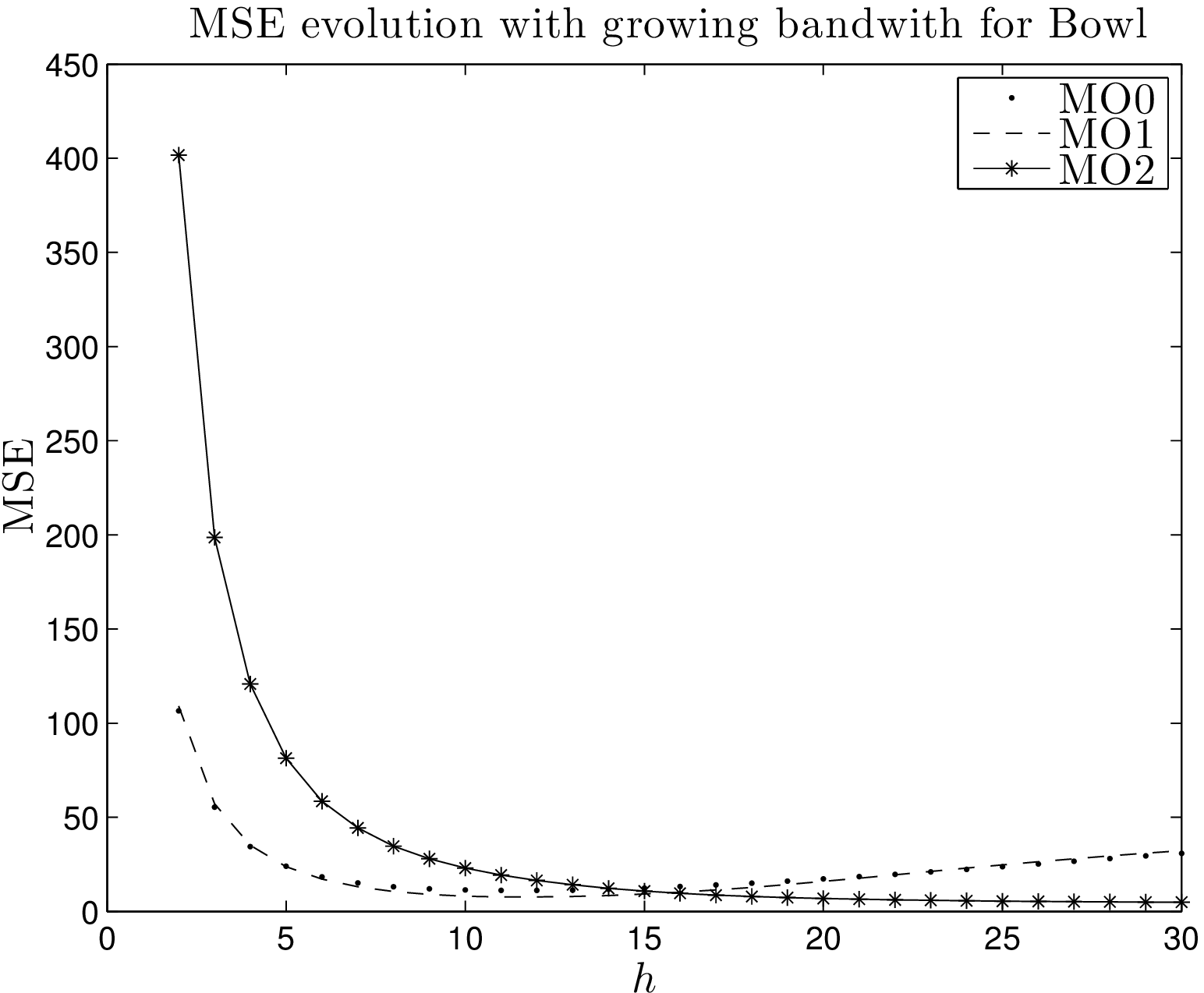}}
\subfigure[$\sigma=100$]{\includegraphics[width=.40\linewidth]{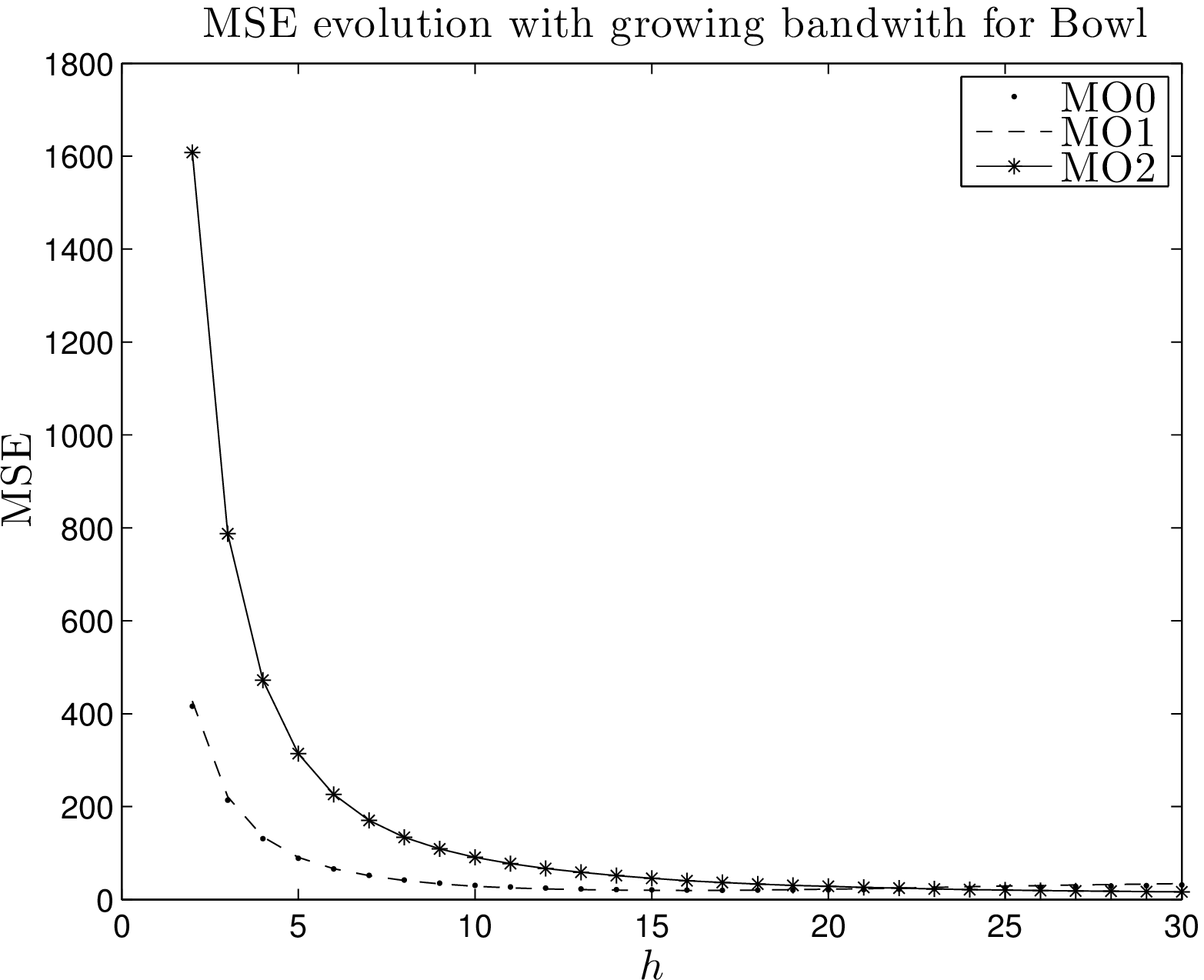}}
\caption{ MSE with respect to $h$ for
  the image Bowl, with noise level $\sigma \in\{5, 20, 50,
  100\}$.}\label{fig:searching_zone_evolution}
\end{figure}

\begin{table}[tbp]
\begin{center}
{\scriptsize
\begin{tabular}{|c|c@{\quad}c@{\quad}c@{\quad}c@{\quad}|}
\hline
& $\sigma=5$
& $\sigma=20$
& $\sigma=50$
& $\sigma=100$
\\
\hline
$r=0$ 
& 7
& 13
& 23
& 35
\\
$r=1$ 
& 9
& 17
& 25
& 33
\\
$r=2$ 
& 23
& 41
& 59
& 61
\\

\hline
\end{tabular}
}
\caption{Spatial bandwidth $h$ used in practice obtained by minimizing the MSE of
the MO on Bowl (\lcf \figref{searching_zone_evolution}).\label{tab:optimal_bandwidth} 
} 
\end{center}
\end{table}

In practice, $h$ needs to increase with $r$ 
to ensure that the LPR is stable.  Note that
there are $q = {r+d \choose d}$ polynomial coefficients in each search
window.  If we apply the rule of thumb of 10 observations per unknown
parameter, the search window needs to include about $10 q$ pixels.
This is illustrated in \figref{searching_zone_evolution}, where we see
the best $h$ increasing with $r$.

Since for natural (not cartoon-like) images MO is irrelevant, we have used a modified YF oracle
instead. This oracle has access to the original image to compute the
weights as in \eqref{eq:kernel_weights}, and then performs LPR on the
noisy pixel values with these weights.  For
piecewise constant images, this coincides exactly with MO  
as soon as the bandwidth is large enough.

The experiments conducted show that LF is always outperformed in
practice by YF, NLM and \NLMm.  For low noise level ($\sigma=5$), the
YF with $r=2$ outperforms the other methods (\lcf
\figref{Bowl_sigma=5} and
Table~\ref{tab:general_comparison}). However, in the presence of
strong noise the NLM and the \NLMm~are the clear winners on most
images. Interestingly, and may be surprisingly the \NLMm~ can even
improve on the NLM for very strong noise, even for natural images.  On
the other hand, one can see that the \NLMm~mimics the behavior of LF
for textured images with strong noise (\lcf
\figref{Stripes_sigma=100}), due to the fact that different sides of
periodic features are averaged together. This limitation is
particularly obvious for the Stripe image \ref{fig:Stripes_sigma=100}.

The influence of the degree $r$ of the LPR depends on the nature of the image being denoised (\eg natural vs. cartoon images). In practice
it remains unclear how this parameter should be tuned. 
\figref{Sinusoid_sigma=5}, \figref{Sinusoid_sigma=20},
\figref{Sinusoid_sigma=50}, and \figref{Sinusoid_sigma=100}
demonstrate the importance of the jump parameter $\mu$ in practice. On
the left end of the Swoosh, the jump is larger and we reconstruct it
accurately across all noise levels. On the right end, the jump is
smaller and the performance degrades with $\sigma$, exactly as
predicted by our theory.

The MATLAB codes are available on the authors' webpages to reproduce those results.


\begin{table}[tbp]
\begin{center}
{\tiny
\begin{tabular}{|c|r@{\quad}r@{\quad}r@{\quad}r@{\quad}r@{\quad}r@{\quad}r@{\quad}|}
\hline
& Blob
& Sinusoid
& Bowl
& Ridges
& Stripes
& Barbara
& Cam.
\\
\hline
& \multicolumn{7}{|c|}{$\sigma = 5$}\\
\hline
LF0 
& 35.27
& 40.29
& 57.71
& 48.80
& 21077.49
& 408.13
& 437.96
\\
LF1 
& 48.50
& 55.50
& 74.18
& 110.08
& 22787.90
& 473.15
& 529.64
\\
LF2 
& 72.11
& 82.89
& 105.09
& 246.70
& 15424.99
& 586.47
& 663.59
\\
YF0 
& 1.45
& 1.67
& 1.74
& 13.70
& 1.68
& 20.48
& 13.59
\\
YF1 
& 1.11
& 1.30
& 1.01
& 8.57
& 1.21
& 20.03
& 13.39
\\
YF2 
& 0.94
& 1.17
& 0.87
& 7.94
& 0.69
& 20.79
& 13.48
\\
NLM-Av.0 
& 1.61
& 2.55
& 3.30
& 4.15
& 327.70
& 255.23
& 188.74
\\
NLM-Av.1 
& 1.49
& 1.79
& 2.33
& 4.08
& 202.88
& 202.96
& 99.13
\\
NLM-Av.2 
& 1.35
& 1.67
& 1.99
& 3.45
& 329.83
& 242.91
& 151.60
\\
NLM0 
& 1.55
& 1.74
& 1.86
& 3.88
& 3.89
& 19.54
& 13.52
\\
NLM1 
& 1.47
& 1.55
& 1.73
& 3.65
& 3.11
& 19.79
& 13.72
\\
NLM2 
& 1.51
& 1.51
& 1.59
& 3.59
& 1.68
& 20.27
& 13.74
\\
MO0 
& 1.58
& 1.77
& 0.97
& 16.16
& 1.23
& 35.10
& 28.60
\\
MO1 
& 1.11
& 1.26
& 0.52
& 19.13
& 0.81
& 37.84
& 28.99
\\
MO2 
& 0.97
& 1.31
& 0.35
& 12.41
& 0.36
& 39.65
& 29.75
\\
\hline
& \multicolumn{7}{|c|}{$\sigma = 20$}\\
\hline
LF0 
& 77.77
& 91.66
& 109.58
& 305.04
& 13956.38
& 607.11
& 684.54
\\
LF1 
& 104.25
& 134.27
& 141.41
& 533.25
& 14455.88
& 725.30
& 818.62
\\
LF2 
& 139.80
& 208.44
& 188.67
& 707.47
& 16996.42
& 901.23
& 994.78
\\
YF0 
& 15.43
& 17.05
& 11.61
& 118.99
& 9.77
& 189.69
& 104.70
\\
YF1 
& 17.93
& 20.80
& 11.46
& 158.55
& 7.81
& 219.63
& 113.33
\\
YF2 
& 18.97
& 24.66
& 14.34
& 174.83
& 6.59
& 242.48
& 122.42
\\
NLM-Av.0 
& 6.99
& 9.27
& 17.76
& 18.73
& 332.71
& 345.66
& 307.67
\\
NLM-Av.1 
& 6.02
& 7.72
& 14.12
& 19.61
& 406.11
& 334.80
& 275.87
\\
NLM-Av.2 
& 4.58
& 7.53
& 13.05
& 15.15
& 399.46
& 352.14
& 306.27
\\
NLM0 
& 5.76
& 6.37
& 12.66
& 20.44
& 11.54
& 121.17
& 92.36
\\
NLM1 
& 5.53
& 5.70
& 13.28
& 21.68
& 9.31
& 129.10
& 96.88
\\
NLM2 
& 5.02
& 4.53
& 13.16
& 19.44
& 5.92
& 137.95
& 101.09
\\
MO0 
& 4.00
& 4.41
& 5.03
& 31.78
& 4.65
& 41.67
& 34.24
\\
MO1 
& 2.96
& 3.25
& 2.82
& 35.75
& 2.89
& 44.92
& 34.06
\\
MO2 
& 2.26
& 2.74
& 1.88
& 33.60
& 1.83
& 45.68
& 34.88
\\
\hline
& \multicolumn{7}{|c|}{$\sigma = 50$}\\
\hline
LF0 
& 149.70
& 211.28
& 195.46
& 847.84
& 17633.15
& 900.24
& 997.56
\\
LF1 
& 162.93
& 232.97
& 211.39
& 939.06
& 15081.34
& 955.73
& 1048.22
\\
LF2 
& 209.56
& 290.03
& 273.74
& 1501.85
& 15705.82
& 1157.37
& 1221.38
\\
YF0 
& 112.17
& 138.30
& 146.13
& 591.42
& 857.69
& 652.97
& 523.88
\\
YF1 
& 129.07
& 155.85
& 164.73
& 655.67
& 722.93
& 699.37
& 574.87
\\
YF2 
& 146.05
& 178.84
& 199.35
& 998.40
& 741.85
& 811.85
& 629.87
\\
NLM-Av.0 
& 23.66
& 29.89
& 52.96
& 64.32
& 807.78
& 419.12
& 389.60
\\
NLM-Av.1 
& 21.51
& 27.56
& 36.86
& 69.56
& 770.17
& 414.81
& 372.68
\\
NLM-Av.2 
& 18.17
& 26.07
& 39.07
& 67.12
& 820.21
& 425.60
& 385.13
\\
NLM0 
& 21.64
& 27.35
& 36.32
& 162.17
& 40.92
& 367.48
& 230.35
\\
NLM1 
& 29.09
& 31.35
& 30.78
& 179.78
& 25.50
& 381.14
& 234.01
\\
NLM2 
& 25.33
& 30.60
& 30.15
& 245.86
& 20.72
& 398.52
& 243.81
\\
MO0 
& 7.68
& 8.32
& 11.23
& 48.99
& 10.90
& 50.50
& 42.64
\\
MO1 
& 7.72
& 7.98
& 9.20
& 57.78
& 8.70
& 55.67
& 44.46
\\
MO2 
& 5.56
& 6.11
& 6.43
& 67.90
& 6.01
& 49.81
& 41.51
\\
\hline
& \multicolumn{7}{|c|}{$\sigma = 100$}\\
\hline
LF0 
& 239.01
& 319.36
& 300.81
& 1340.28
& 17131.55
& 1198.68
& 1249.50
\\
LF1 
& 225.89
& 307.90
& 285.37
& 1277.60
& 17776.32
& 1159.50
& 1218.33
\\
LF2 
& 225.36
& 305.06
& 291.83
& 1550.89
& 17079.65
& 1188.50
& 1248.76
\\
YF0 
& 308.15
& 367.90
& 365.84
& 1206.35
& 8848.92
& 1108.23
& 1080.66
\\
YF1 
& 299.59
& 359.21
& 352.73
& 1156.57
& 9197.93
& 1077.16
& 1064.45
\\
YF2 
& 296.39
& 355.73
& 356.40
& 1375.29
& 8813.04
& 1099.59
& 1077.78
\\
NLM-Av.0 
& 64.41
& 76.19
& 118.55
& 202.15
& 8223.43
& 556.50
& 495.62
\\
NLM-Av.1 
& 66.78
& 74.22
& 94.12
& 204.95
& 8385.63
& 554.88
& 492.05
\\
NLM-Av.2 
& 66.27
& 73.42
& 98.31
& 224.59
& 8118.55
& 555.58
& 495.58
\\
NLM0 
& 91.67
& 131.36
& 167.44
& 819.97
& 91.49
& 911.60
& 628.08
\\
NLM1 
& 118.08
& 135.37
& 183.32
& 786.01
& 90.29
& 926.67
& 662.68
\\
NLM2 
& 101.83
& 127.34
& 171.13
& 956.96
& 88.01
& 918.08
& 646.76
\\
MO0 
& 14.19
& 15.12
& 22.74
& 80.33
& 19.90
& 61.09
& 54.41
\\
MO1 
& 18.31
& 17.96
& 23.06
& 91.72
& 22.38
& 76.21
& 65.36
\\
MO2 
& 17.76
& 17.65
& 18.95
& 88.60
& 22.09
& 72.00
& 62.53
\\

\hline
\end{tabular}}
\caption{MSE comparisons of the denoising methods considered for LPR
  of order 0, 1 and 2.  The compared methods are the Linear Filter
  (LF), the Yaroslavsky Filter (YF), the NLM-average (NLM-Av.), the
  classical NLM and the \textit{Membership Oracle} (MO). Results are
  averaged over 5 Gaussian noise replicas.  }
\label{tab:general_comparison}
\end{center}
\vspace{2in}
\end{table}

\begin{figure}[hbt!]
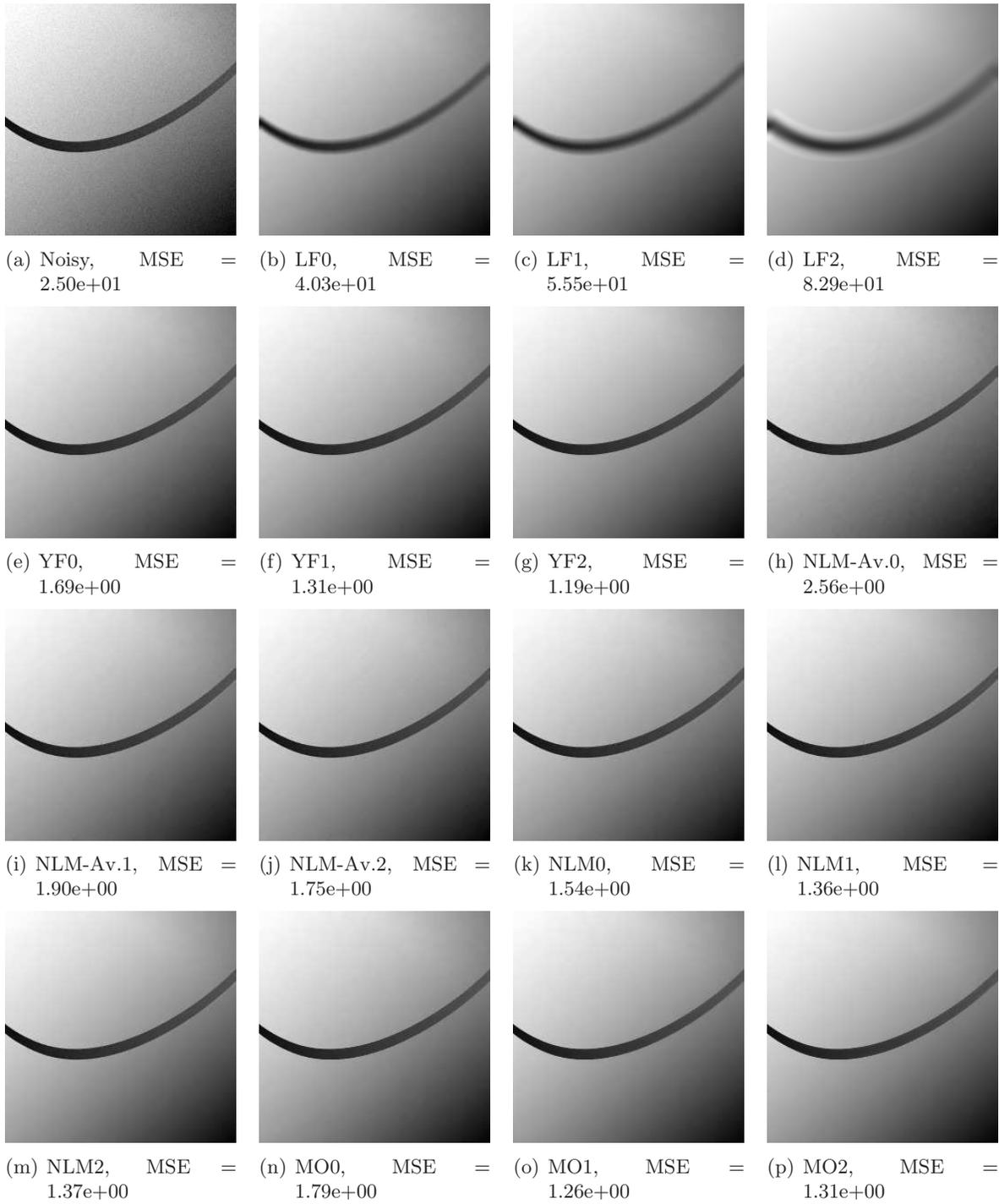

\centering
\input{./latex_figures/Im_Noisy_5_Sinusoid}  \subfigure[\currentcaption]{\includegraphics[width=0.233\linewidth]{\currentname}}\hfill
\input{./latex_figures/Im_Box_Kernel0_5_Sinusoid}  \subfigure[\currentcaption]{\includegraphics[width=0.233\linewidth]{\currentname}}\hfill
\input{./latex_figures/Im_Box_Kernel1_5_Sinusoid}  \subfigure[\currentcaption]{\includegraphics[width=0.233\linewidth]{\currentname}}\hfill
\input{./latex_figures/Im_Box_Kernel2_5_Sinusoid}  \subfigure[\currentcaption]{\includegraphics[width=0.233\linewidth]{\currentname}}\hfill
\input{./latex_figures/Im_YF0_5_Sinusoid}  \subfigure[\currentcaption]{\includegraphics[width=0.233\linewidth]{\currentname}}\hfill
\input{./latex_figures/Im_YF1_5_Sinusoid}  \subfigure[\currentcaption]{\includegraphics[width=0.233\linewidth]{\currentname}}\hfill
\input{./latex_figures/Im_YF2_5_Sinusoid}  \subfigure[\currentcaption]{\includegraphics[width=0.233\linewidth]{\currentname}}\hfill
\input{./latex_figures/Im_NLM_Means0_5_Sinusoid}  \subfigure[\currentcaption]{\includegraphics[width=0.233\linewidth]{\currentname}}\hfill
\input{./latex_figures/Im_NLM_Means1_5_Sinusoid}  \subfigure[\currentcaption]{\includegraphics[width=0.233\linewidth]{\currentname}}\hfill
\input{./latex_figures/Im_NLM_Means2_5_Sinusoid}  \subfigure[\currentcaption]{\includegraphics[width=0.233\linewidth]{\currentname}}\hfill
\input{./latex_figures/Im_NLM0_5_Sinusoid}  \subfigure[\currentcaption]{\includegraphics[width=0.233\linewidth]{\currentname}}\hfill
\input{./latex_figures/Im_NLM1_5_Sinusoid}  \subfigure[\currentcaption]{\includegraphics[width=0.233\linewidth]{\currentname}}\hfill
\input{./latex_figures/Im_NLM2_5_Sinusoid}  \subfigure[\currentcaption]{\includegraphics[width=0.233\linewidth]{\currentname}}\hfill
\input{./latex_figures/Im_MO0_5_Sinusoid}  \subfigure[\currentcaption]{\includegraphics[width=0.233\linewidth]{\currentname}}\hfill
\input{./latex_figures/Im_MO1_5_Sinusoid}  \subfigure[\currentcaption]{\includegraphics[width=0.233\linewidth]{\currentname}}\hfill
\input{./latex_figures/Im_MO2_5_Sinusoid}  \subfigure[\currentcaption]{\includegraphics[width=0.233\linewidth]{\currentname}}
\caption{Toy thin feature  image  (Swoosh) corrupted Gaussian
  noise with $\sigma=5$.}
\label{fig:Sinusoid_sigma=5}
\end{figure}

\begin{figure}[hbt!]
\centering
\input{./latex_figures/Im_Noisy_20_Sinusoid}  \subfigure[\currentcaption]{\includegraphics[width=0.233\linewidth]{\currentname}}\hfill
\input{./latex_figures/Im_Box_Kernel0_20_Sinusoid}  \subfigure[\currentcaption]{\includegraphics[width=0.233\linewidth]{\currentname}}\hfill
\input{./latex_figures/Im_Box_Kernel1_20_Sinusoid}  \subfigure[\currentcaption]{\includegraphics[width=0.233\linewidth]{\currentname}}\hfill
\input{./latex_figures/Im_Box_Kernel2_20_Sinusoid}  \subfigure[\currentcaption]{\includegraphics[width=0.233\linewidth]{\currentname}}\hfill
\input{./latex_figures/Im_YF0_20_Sinusoid}  \subfigure[\currentcaption]{\includegraphics[width=0.233\linewidth]{\currentname}}\hfill
\input{./latex_figures/Im_YF1_20_Sinusoid}  \subfigure[\currentcaption]{\includegraphics[width=0.233\linewidth]{\currentname}}\hfill
\input{./latex_figures/Im_YF2_20_Sinusoid}  \subfigure[\currentcaption]{\includegraphics[width=0.233\linewidth]{\currentname}}\hfill
\input{./latex_figures/Im_NLM_Means0_20_Sinusoid}  \subfigure[\currentcaption]{\includegraphics[width=0.233\linewidth]{\currentname}}\hfill
\input{./latex_figures/Im_NLM_Means1_20_Sinusoid}  \subfigure[\currentcaption]{\includegraphics[width=0.233\linewidth]{\currentname}}\hfill
\input{./latex_figures/Im_NLM_Means2_20_Sinusoid}  \subfigure[\currentcaption]{\includegraphics[width=0.233\linewidth]{\currentname}}\hfill
\input{./latex_figures/Im_NLM0_20_Sinusoid}  \subfigure[\currentcaption]{\includegraphics[width=0.233\linewidth]{\currentname}}\hfill
\input{./latex_figures/Im_NLM1_20_Sinusoid}  \subfigure[\currentcaption]{\includegraphics[width=0.233\linewidth]{\currentname}}\hfill
\input{./latex_figures/Im_NLM2_20_Sinusoid}  \subfigure[\currentcaption]{\includegraphics[width=0.233\linewidth]{\currentname}}\hfill
\input{./latex_figures/Im_MO0_20_Sinusoid}  \subfigure[\currentcaption]{\includegraphics[width=0.233\linewidth]{\currentname}}\hfill
\input{./latex_figures/Im_MO1_20_Sinusoid}  \subfigure[\currentcaption]{\includegraphics[width=0.233\linewidth]{\currentname}}\hfill
\input{./latex_figures/Im_MO2_20_Sinusoid}  \subfigure[\currentcaption]{\includegraphics[width=0.233\linewidth]{\currentname}}
\caption{Toy thin feature image (Swoosh)  corrupted Gaussian noise with $\sigma=20$. }
\label{fig:Sinusoid_sigma=20}
\end{figure}

\begin{figure}[hbt!]
\centering
\input{./latex_figures/Im_Noisy_50_Sinusoid}  \subfigure[\currentcaption]{\includegraphics[width=0.233\linewidth]{\currentname}}\hfill
\input{./latex_figures/Im_Box_Kernel0_50_Sinusoid}  \subfigure[\currentcaption]{\includegraphics[width=0.233\linewidth]{\currentname}}\hfill
\input{./latex_figures/Im_Box_Kernel1_50_Sinusoid}  \subfigure[\currentcaption]{\includegraphics[width=0.233\linewidth]{\currentname}}\hfill
\input{./latex_figures/Im_Box_Kernel2_50_Sinusoid}  \subfigure[\currentcaption]{\includegraphics[width=0.233\linewidth]{\currentname}}\hfill
\input{./latex_figures/Im_YF0_50_Sinusoid}  \subfigure[\currentcaption]{\includegraphics[width=0.233\linewidth]{\currentname}}\hfill
\input{./latex_figures/Im_YF1_50_Sinusoid}  \subfigure[\currentcaption]{\includegraphics[width=0.233\linewidth]{\currentname}}\hfill
\input{./latex_figures/Im_YF2_50_Sinusoid}  \subfigure[\currentcaption]{\includegraphics[width=0.233\linewidth]{\currentname}}\hfill
\input{./latex_figures/Im_NLM_Means0_50_Sinusoid}  \subfigure[\currentcaption]{\includegraphics[width=0.233\linewidth]{\currentname}}\hfill
\input{./latex_figures/Im_NLM_Means1_50_Sinusoid}  \subfigure[\currentcaption]{\includegraphics[width=0.233\linewidth]{\currentname}}\hfill
\input{./latex_figures/Im_NLM_Means2_50_Sinusoid}  \subfigure[\currentcaption]{\includegraphics[width=0.233\linewidth]{\currentname}}\hfill
\input{./latex_figures/Im_NLM0_50_Sinusoid}  \subfigure[\currentcaption]{\includegraphics[width=0.233\linewidth]{\currentname}}\hfill
\input{./latex_figures/Im_NLM1_50_Sinusoid}  \subfigure[\currentcaption]{\includegraphics[width=0.233\linewidth]{\currentname}}\hfill
\input{./latex_figures/Im_NLM2_50_Sinusoid}  \subfigure[\currentcaption]{\includegraphics[width=0.233\linewidth]{\currentname}}\hfill
\input{./latex_figures/Im_MO0_50_Sinusoid}  \subfigure[\currentcaption]{\includegraphics[width=0.233\linewidth]{\currentname}}\hfill
\input{./latex_figures/Im_MO1_50_Sinusoid}  \subfigure[\currentcaption]{\includegraphics[width=0.233\linewidth]{\currentname}}\hfill
\input{./latex_figures/Im_MO2_50_Sinusoid}  \subfigure[\currentcaption]{\includegraphics[width=0.233\linewidth]{\currentname}}
\caption{Toy thin feature image (Swoosh) corrupted Gaussian noise with $\sigma=50$. }
\label{fig:Sinusoid_sigma=50}
\end{figure}

\begin{figure}[hbt!]
\centering
\input{./latex_figures/Im_Noisy_100_Sinusoid}  \subfigure[\currentcaption]{\includegraphics[width=0.233\linewidth]{\currentname}}\hfill
\input{./latex_figures/Im_Box_Kernel0_100_Sinusoid}  \subfigure[\currentcaption]{\includegraphics[width=0.233\linewidth]{\currentname}}\hfill
\input{./latex_figures/Im_Box_Kernel1_100_Sinusoid}  \subfigure[\currentcaption]{\includegraphics[width=0.233\linewidth]{\currentname}}\hfill
\input{./latex_figures/Im_Box_Kernel2_100_Sinusoid}  \subfigure[\currentcaption]{\includegraphics[width=0.233\linewidth]{\currentname}}\hfill
\input{./latex_figures/Im_YF0_100_Sinusoid}  \subfigure[\currentcaption]{\includegraphics[width=0.233\linewidth]{\currentname}}\hfill
\input{./latex_figures/Im_YF1_100_Sinusoid}  \subfigure[\currentcaption]{\includegraphics[width=0.233\linewidth]{\currentname}}\hfill
\input{./latex_figures/Im_YF2_100_Sinusoid}  \subfigure[\currentcaption]{\includegraphics[width=0.233\linewidth]{\currentname}}\hfill
\input{./latex_figures/Im_NLM_Means0_100_Sinusoid}  \subfigure[\currentcaption]{\includegraphics[width=0.233\linewidth]{\currentname}}\hfill
\input{./latex_figures/Im_NLM_Means1_100_Sinusoid}  \subfigure[\currentcaption]{\includegraphics[width=0.233\linewidth]{\currentname}}\hfill
\input{./latex_figures/Im_NLM_Means2_100_Sinusoid}  \subfigure[\currentcaption]{\includegraphics[width=0.233\linewidth]{\currentname}}\hfill
\input{./latex_figures/Im_NLM0_100_Sinusoid}  \subfigure[\currentcaption]{\includegraphics[width=0.233\linewidth]{\currentname}}\hfill
\input{./latex_figures/Im_NLM1_100_Sinusoid}  \subfigure[\currentcaption]{\includegraphics[width=0.233\linewidth]{\currentname}}\hfill
\input{./latex_figures/Im_NLM2_100_Sinusoid}  \subfigure[\currentcaption]{\includegraphics[width=0.233\linewidth]{\currentname}}\hfill
\input{./latex_figures/Im_MO0_100_Sinusoid}  \subfigure[\currentcaption]{\includegraphics[width=0.233\linewidth]{\currentname}}\hfill
\input{./latex_figures/Im_MO1_100_Sinusoid}  \subfigure[\currentcaption]{\includegraphics[width=0.233\linewidth]{\currentname}}\hfill
\input{./latex_figures/Im_MO2_100_Sinusoid}  \subfigure[\currentcaption]{\includegraphics[width=0.233\linewidth]{\currentname}}
\caption{Toy thin feature image (Swoosh) corrupted Gaussian noise with $\sigma=100$.}
\label{fig:Sinusoid_sigma=100}
\end{figure}

\begin{figure}[hbt!]
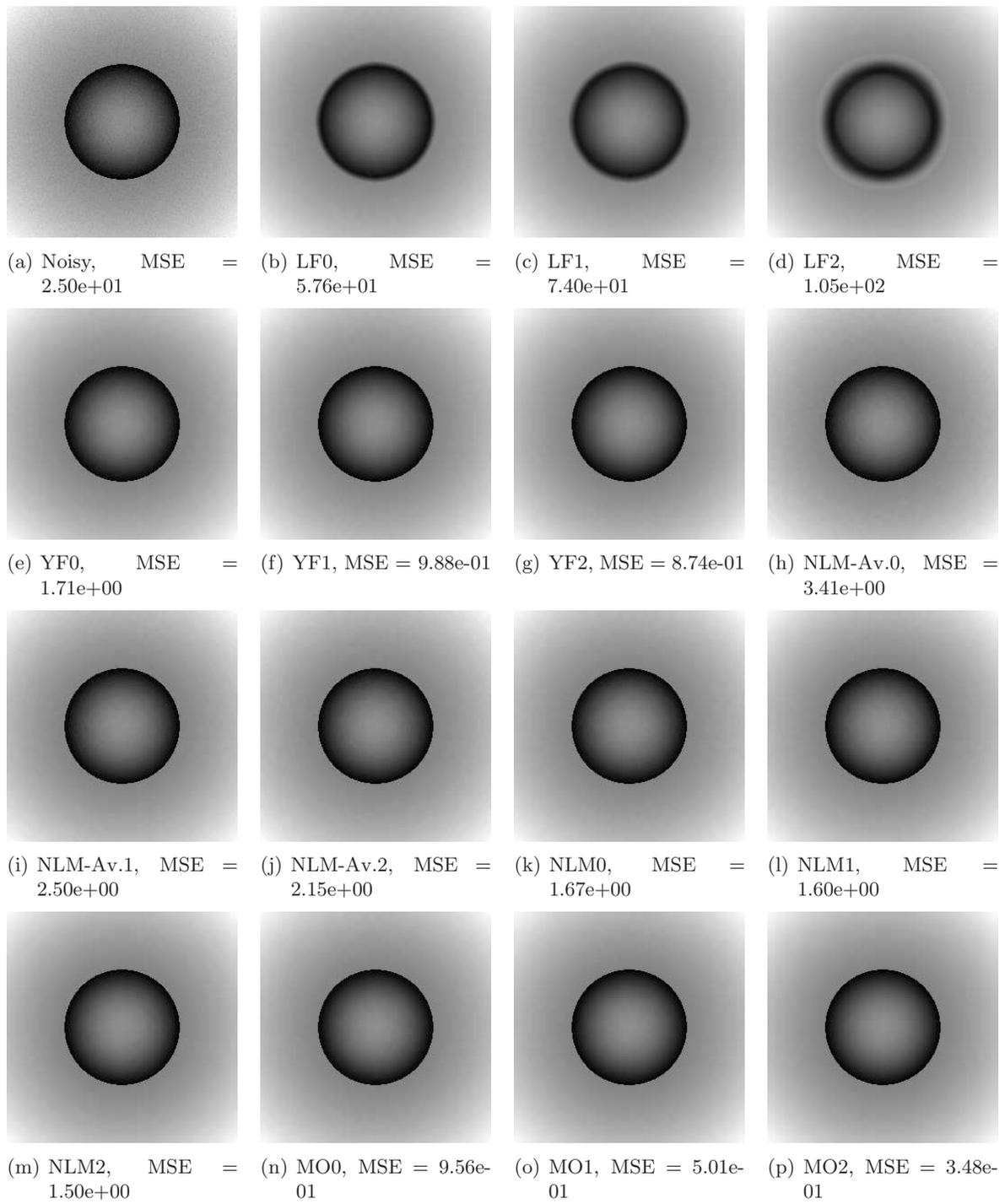

\centering
\input{./latex_figures/Im_Noisy_5_Bowl}  \subfigure[\currentcaption]{\includegraphics[width=0.233\linewidth]{\currentname}}\hfill
\input{./latex_figures/Im_Box_Kernel0_5_Bowl}  \subfigure[\currentcaption]{\includegraphics[width=0.233\linewidth]{\currentname}}\hfill
\input{./latex_figures/Im_Box_Kernel1_5_Bowl}  \subfigure[\currentcaption]{\includegraphics[width=0.233\linewidth]{\currentname}}\hfill
\input{./latex_figures/Im_Box_Kernel2_5_Bowl}  \subfigure[\currentcaption]{\includegraphics[width=0.233\linewidth]{\currentname}}\hfill
\input{./latex_figures/Im_YF0_5_Bowl}  \subfigure[\currentcaption]{\includegraphics[width=0.233\linewidth]{\currentname}}\hfill
\input{./latex_figures/Im_YF1_5_Bowl}  \subfigure[\currentcaption]{\includegraphics[width=0.233\linewidth]{\currentname}}\hfill
\input{./latex_figures/Im_YF2_5_Bowl}  \subfigure[\currentcaption]{\includegraphics[width=0.233\linewidth]{\currentname}}\hfill
\input{./latex_figures/Im_NLM_Means0_5_Bowl}  \subfigure[\currentcaption]{\includegraphics[width=0.233\linewidth]{\currentname}}\hfill
\input{./latex_figures/Im_NLM_Means1_5_Bowl}  \subfigure[\currentcaption]{\includegraphics[width=0.233\linewidth]{\currentname}}\hfill
\input{./latex_figures/Im_NLM_Means2_5_Bowl}  \subfigure[\currentcaption]{\includegraphics[width=0.233\linewidth]{\currentname}}\hfill
\input{./latex_figures/Im_NLM0_5_Bowl}  \subfigure[\currentcaption]{\includegraphics[width=0.233\linewidth]{\currentname}}\hfill
\input{./latex_figures/Im_NLM1_5_Bowl}  \subfigure[\currentcaption]{\includegraphics[width=0.233\linewidth]{\currentname}}\hfill
\input{./latex_figures/Im_NLM2_5_Bowl}  \subfigure[\currentcaption]{\includegraphics[width=0.233\linewidth]{\currentname}}\hfill
\input{./latex_figures/Im_MO0_5_Bowl}  \subfigure[\currentcaption]{\includegraphics[width=0.233\linewidth]{\currentname}}\hfill
\input{./latex_figures/Im_MO1_5_Bowl}  \subfigure[\currentcaption]{\includegraphics[width=0.233\linewidth]{\currentname}}\hfill 
\input{./latex_figures/Im_MO2_5_Bowl}  \subfigure[\currentcaption]{\includegraphics[width=0.233\linewidth]{\currentname}}
\caption{Toy cartoon image (Bowl)  corrupted Gaussian noise with $\sigma=5$.  }
\label{fig:Bowl_sigma=5}
\end{figure}

\begin{figure}[hbt!]
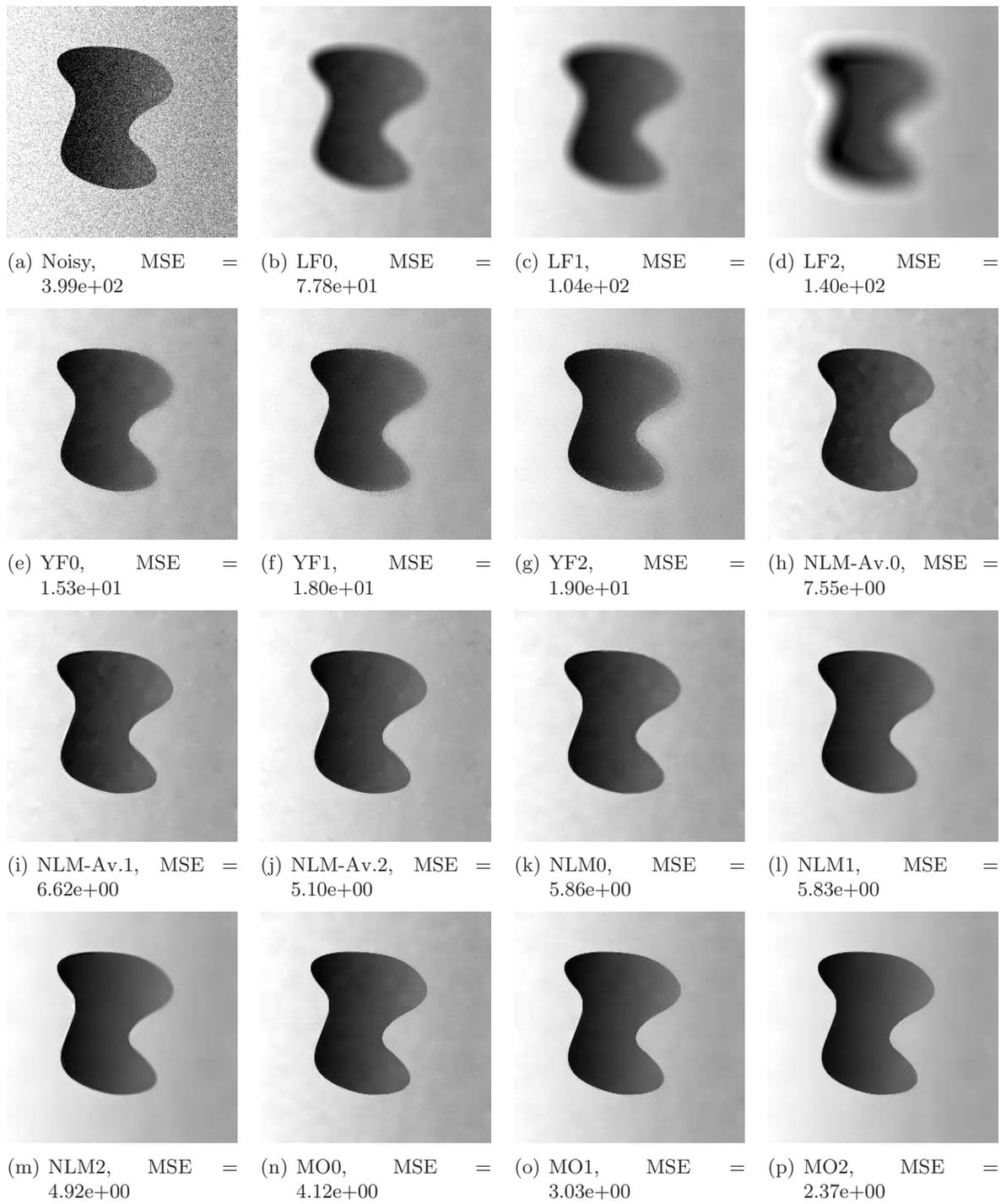

\centering
\input{./latex_figures/Im_Noisy_20_Blob}  \subfigure[\currentcaption]{\includegraphics[width=0.233\linewidth]{\currentname}}\hfill
\input{./latex_figures/Im_Box_Kernel0_20_Blob}  \subfigure[\currentcaption]{\includegraphics[width=0.233\linewidth]{\currentname}}\hfill
\input{./latex_figures/Im_Box_Kernel1_20_Blob}  \subfigure[\currentcaption]{\includegraphics[width=0.233\linewidth]{\currentname}}\hfill
\input{./latex_figures/Im_Box_Kernel2_20_Blob}  \subfigure[\currentcaption]{\includegraphics[width=0.233\linewidth]{\currentname}}\hfill
\input{./latex_figures/Im_YF0_20_Blob}  \subfigure[\currentcaption]{\includegraphics[width=0.233\linewidth]{\currentname}}\hfill
\input{./latex_figures/Im_YF1_20_Blob}  \subfigure[\currentcaption]{\includegraphics[width=0.233\linewidth]{\currentname}}\hfill
\input{./latex_figures/Im_YF2_20_Blob}  \subfigure[\currentcaption]{\includegraphics[width=0.233\linewidth]{\currentname}}\hfill
\input{./latex_figures/Im_NLM_Means0_20_Blob}  \subfigure[\currentcaption]{\includegraphics[width=0.233\linewidth]{\currentname}}\hfill
\input{./latex_figures/Im_NLM_Means1_20_Blob}  \subfigure[\currentcaption]{\includegraphics[width=0.233\linewidth]{\currentname}}\hfill
\input{./latex_figures/Im_NLM_Means2_20_Blob}  \subfigure[\currentcaption]{\includegraphics[width=0.233\linewidth]{\currentname}}\hfill
\input{./latex_figures/Im_NLM0_20_Blob}  \subfigure[\currentcaption]{\includegraphics[width=0.233\linewidth]{\currentname}}\hfill
\input{./latex_figures/Im_NLM1_20_Blob}  \subfigure[\currentcaption]{\includegraphics[width=0.233\linewidth]{\currentname}}\hfill
\input{./latex_figures/Im_NLM2_20_Blob}  \subfigure[\currentcaption]{\includegraphics[width=0.233\linewidth]{\currentname}}\hfill
\input{./latex_figures/Im_MO0_20_Blob}  \subfigure[\currentcaption]{\includegraphics[width=0.233\linewidth]{\currentname}}\hfill
\input{./latex_figures/Im_MO1_20_Blob}  \subfigure[\currentcaption]{\includegraphics[width=0.233\linewidth]{\currentname}}\hfill
\input{./latex_figures/Im_MO2_20_Blob}  \subfigure[\currentcaption]{\includegraphics[width=0.233\linewidth]{\currentname}}
\caption{Toy cartoon image (Blob)  corrupted Gaussian noise with $\sigma=20$. }
\label{fig:Bowl_sigma=20}
\end{figure}

\begin{figure}[hbt!]
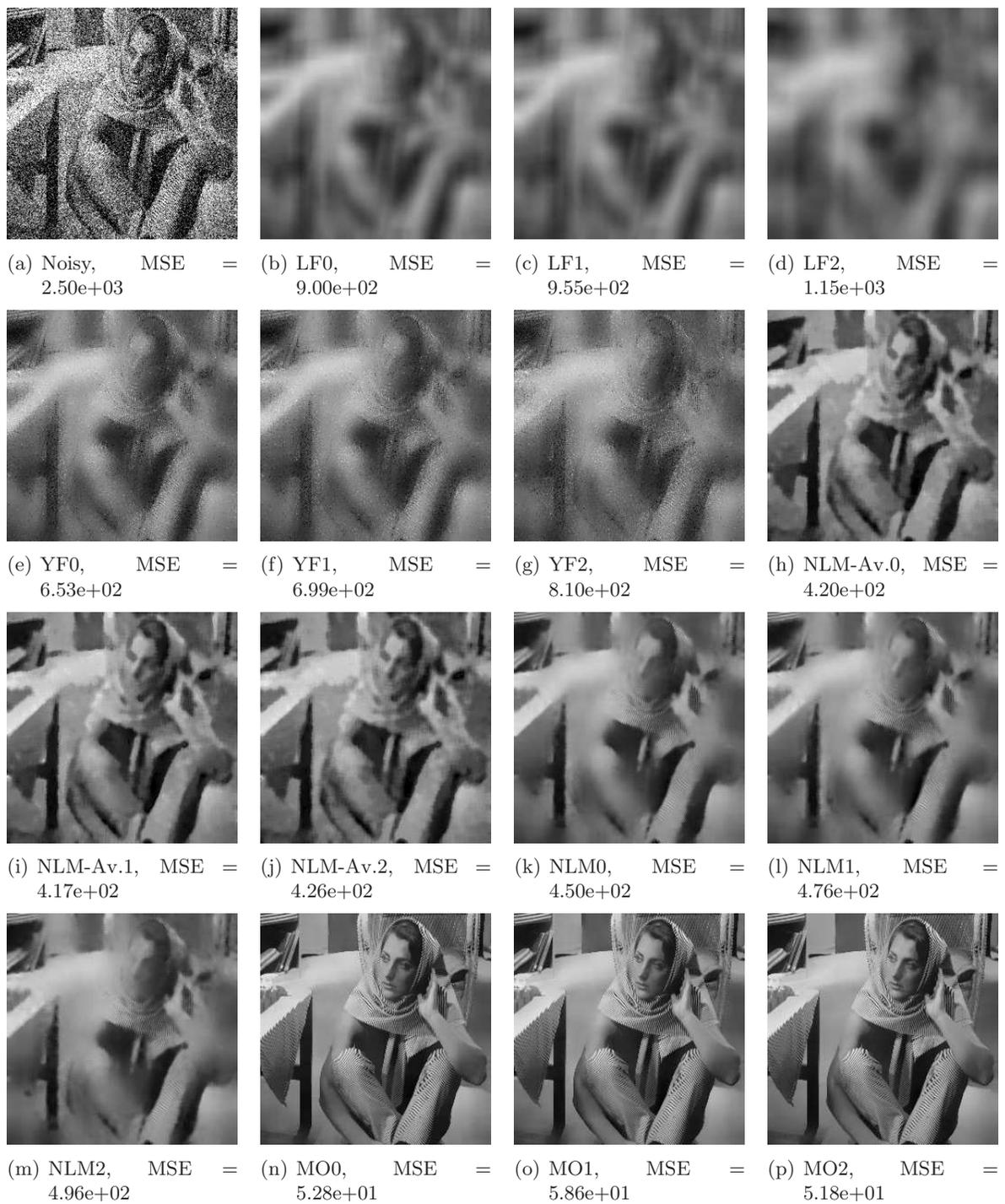

\centering
\input{./latex_figures/Im_Noisy_50_Barbara}  \subfigure[\currentcaption]{\includegraphics[width=0.233\linewidth]{\currentname}}\hfill
\input{./latex_figures/Im_Box_Kernel0_50_Barbara}  \subfigure[\currentcaption]{\includegraphics[width=0.233\linewidth]{\currentname}}\hfill
\input{./latex_figures/Im_Box_Kernel1_50_Barbara}  \subfigure[\currentcaption]{\includegraphics[width=0.233\linewidth]{\currentname}}\hfill
\input{./latex_figures/Im_Box_Kernel2_50_Barbara}  \subfigure[\currentcaption]{\includegraphics[width=0.233\linewidth]{\currentname}}\hfill
\input{./latex_figures/Im_YF0_50_Barbara}  \subfigure[\currentcaption]{\includegraphics[width=0.233\linewidth]{\currentname}}\hfill
\input{./latex_figures/Im_YF1_50_Barbara}  \subfigure[\currentcaption]{\includegraphics[width=0.233\linewidth]{\currentname}}\hfill
\input{./latex_figures/Im_YF2_50_Barbara}  \subfigure[\currentcaption]{\includegraphics[width=0.233\linewidth]{\currentname}}\hfill
\input{./latex_figures/Im_NLM_Means0_50_Barbara}  \subfigure[\currentcaption]{\includegraphics[width=0.233\linewidth]{\currentname}}\hfill
\input{./latex_figures/Im_NLM_Means1_50_Barbara}  \subfigure[\currentcaption]{\includegraphics[width=0.233\linewidth]{\currentname}}\hfill
\input{./latex_figures/Im_NLM_Means2_50_Barbara}  \subfigure[\currentcaption]{\includegraphics[width=0.233\linewidth]{\currentname}}\hfill
\input{./latex_figures/Im_NLM0_50_Barbara}  \subfigure[\currentcaption]{\includegraphics[width=0.233\linewidth]{\currentname}}\hfill
\input{./latex_figures/Im_NLM1_50_Barbara}  \subfigure[\currentcaption]{\includegraphics[width=0.233\linewidth]{\currentname}}\hfill
\input{./latex_figures/Im_NLM2_50_Barbara}  \subfigure[\currentcaption]{\includegraphics[width=0.233\linewidth]{\currentname}}\hfill
\input{./latex_figures/Im_MO0_50_Barbara}  \subfigure[\currentcaption]{\includegraphics[width=0.233\linewidth]{\currentname}}\hfill
\input{./latex_figures/Im_MO1_50_Barbara}  \subfigure[\currentcaption]{\includegraphics[width=0.233\linewidth]{\currentname}}\hfill
\input{./latex_figures/Im_MO2_50_Barbara}  \subfigure[\currentcaption]{\includegraphics[width=0.233\linewidth]{\currentname}}
\caption{Barbara image   corrupted Gaussian noise with $\sigma=50$. }
\label{fig:Barbara_sigma=100}
\end{figure}

\begin{figure}[hbt!]
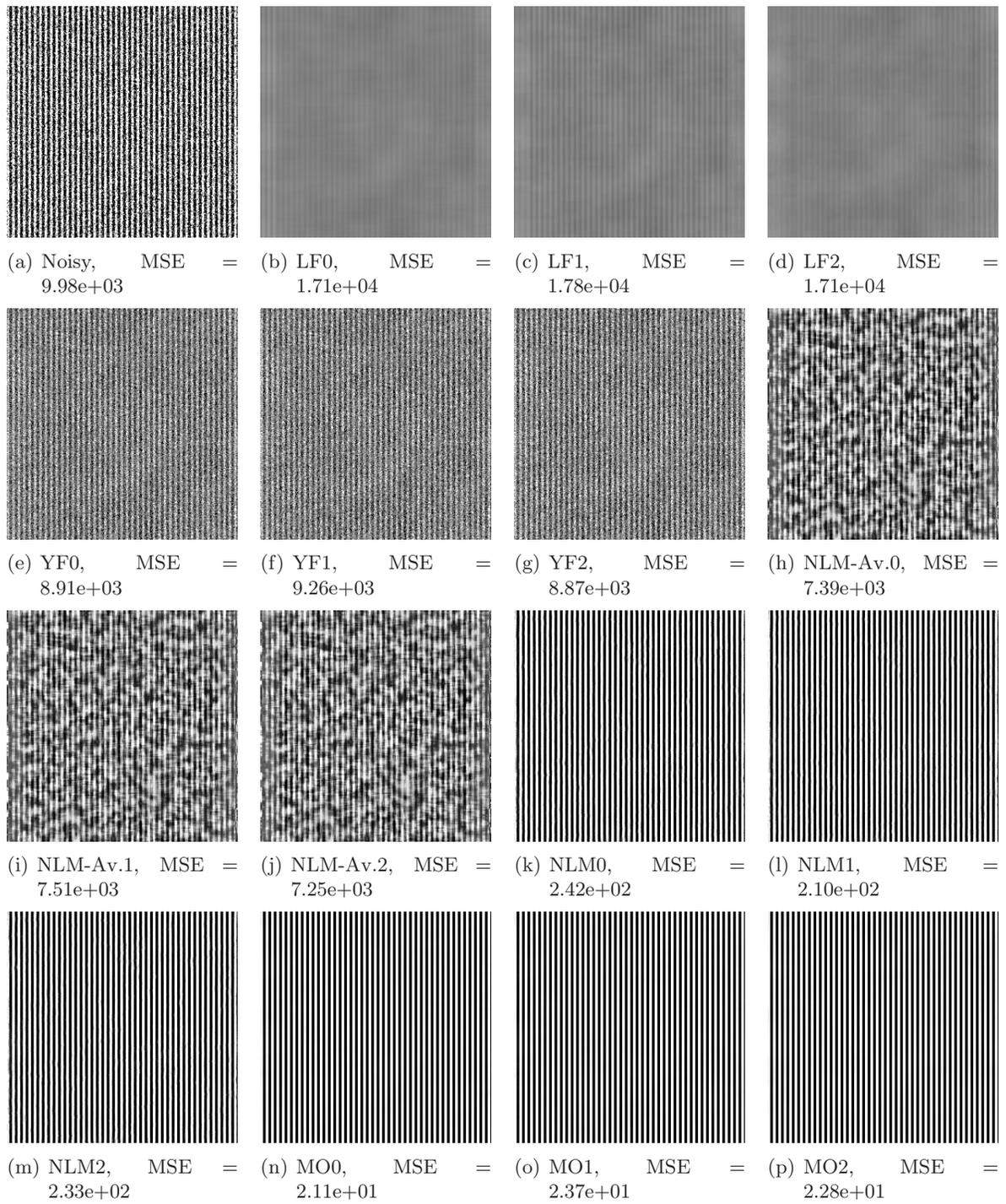

\centering
\input{./latex_figures/Im_Noisy_100_Stripes}  \subfigure[\currentcaption]{\includegraphics[width=0.233\linewidth]{\currentname}}\hfill
\input{./latex_figures/Im_Box_Kernel0_100_Stripes}  \subfigure[\currentcaption]{\includegraphics[width=0.233\linewidth]{\currentname}}\hfill
\input{./latex_figures/Im_Box_Kernel1_100_Stripes}  \subfigure[\currentcaption]{\includegraphics[width=0.233\linewidth]{\currentname}}\hfill
\input{./latex_figures/Im_Box_Kernel2_100_Stripes}  \subfigure[\currentcaption]{\includegraphics[width=0.233\linewidth]{\currentname}}\hfill
\input{./latex_figures/Im_YF0_100_Stripes}  \subfigure[\currentcaption]{\includegraphics[width=0.233\linewidth]{\currentname}}\hfill
\input{./latex_figures/Im_YF1_100_Stripes}  \subfigure[\currentcaption]{\includegraphics[width=0.233\linewidth]{\currentname}}\hfill
\input{./latex_figures/Im_YF2_100_Stripes}  \subfigure[\currentcaption]{\includegraphics[width=0.233\linewidth]{\currentname}}\hfill
\input{./latex_figures/Im_NLM_Means0_100_Stripes}  \subfigure[\currentcaption]{\includegraphics[width=0.233\linewidth]{\currentname}}\hfill
\input{./latex_figures/Im_NLM_Means1_100_Stripes}  \subfigure[\currentcaption]{\includegraphics[width=0.233\linewidth]{\currentname}}\hfill
\input{./latex_figures/Im_NLM_Means2_100_Stripes}  \subfigure[\currentcaption]{\includegraphics[width=0.233\linewidth]{\currentname}}\hfill
\input{./latex_figures/Im_NLM0_100_Stripes}  \subfigure[\currentcaption]{\includegraphics[width=0.233\linewidth]{\currentname}}\hfill
\input{./latex_figures/Im_NLM1_100_Stripes}  \subfigure[\currentcaption]{\includegraphics[width=0.233\linewidth]{\currentname}}\hfill
\input{./latex_figures/Im_NLM2_100_Stripes}  \subfigure[\currentcaption]{\includegraphics[width=0.233\linewidth]{\currentname}}\hfill
\input{./latex_figures/Im_MO0_100_Stripes}  \subfigure[\currentcaption]{\includegraphics[width=0.233\linewidth]{\currentname}}\hfill
\input{./latex_figures/Im_MO1_100_Stripes}  \subfigure[\currentcaption]{\includegraphics[width=0.233\linewidth]{\currentname}}\hfill
\input{./latex_figures/Im_MO2_100_Stripes}  \subfigure[\currentcaption]{\includegraphics[width=0.233\linewidth]{\currentname}}
\caption{Toy texture image (Stripes)  corrupted Gaussian noise with $\sigma=100$. }
\label{fig:Stripes_sigma=100}
\end{figure}

\section{Discussion}
\label{sec:discussion}
The theoretical results of the preceding sections are summarized
in Table~\ref{tab:summary}. 
\begin{table}[tbp]
\begin{center}
\begin{tabular}{|l|l|l|}
\hline
{\bf Image class} & {\bf Method} & {\bf Bound} \\ \hline
\multirow{5}{*}{$\cF^{\rm cartoon}$} & MO & $\cR_n \asymp \cR^{\rm MO} := (\sigma^2/n^d)^{2\alpha/(d+2\alpha)}$\\
 & LF & $\cR_n \asymp (\sigma^2/n^d)^{1/(d+1)}$\\
 & YF & $\cR_n \leq (1+o(1))\cR^{\rm MO}$ (for low noise)\\
 & NLM & $\cR_n \preceq \begin{cases}
(1+o(1))\cR^{\rm MO} & \mbox{ for low noise
}\\
[(\sigma^4 \log n)^{1/d}/n] \vee \cR^{\rm MO} & \mbox{ otherwise} \end{cases}$\\
 & NLM-Av. & $\cR_n \preceq \begin{cases}
(1+o(1))\cR^{\rm MO} & \mbox{ for low noise
}\\
[(\sigma^2 \log n)^{1/d}/n] \vee \cR^{\rm MO} & \mbox{ otherwise} \end{cases}$\\ \hline
\multirow{4}{*}{$\cF^{\rm pattern}$} & MO & $\cR_n \asymp \cR^{\rm MO}$\\
 & LF & $\cR_n \asymp 1$\\
 & YF & $\cR_n \preceq \cR^{\rm MO}$ (for low noise)\\
 & NLM & $\cR_n \preceq \begin{cases}
(1+o(1))\cR^{\rm MO} & \mbox{ for low noise
}\\
[(na)^d \cR^{\rm MO} & \mbox{ for ``distinct'' patterns} \end{cases}$\\ \hline
\multirow{5}{*}{$\cF^{\rm thin}$} & MO & $\E (\wh{f}_i - f_i)^2 \asymp (h^{\rm MO}/a)^{d-d_0}\cR^{\rm MO}$\\
 & LF & $\E (\wh{f}_i - f_i)^2 \asymp 1$\\
 & YF & $\E (\wh{f}_i - f_i)^2 \preceq (h^{\rm MO}/a)^{d-d_0}\cR^{\rm MO}$ (for low noise)\\
 & NLM & $\E (\wh{f}_i - f_i)^2 \preceq (h^{\rm
  MO}/a)^{d-d_0}\cR^{\rm MO}$\\
 & NLM-Av. & $\E (\wh{f}_i - f_i)^2 \preceq (h^{\rm MO}/a)^{d-d_0}\cR^{\rm MO}$\\
\hline
\end{tabular}
\end{center}
\caption{Summary of results}
\label{tab:summary}
\end{table}

As described in the Introduction, the bounds described in this paper
and in the independent work
\cite{Maleki_Narayan_Baraniuk11,Maleki_Narayan_Baraniuk11b} address
{\em fundamental performance limits of NLM and related photometric
  image filtering methods}.  These methods have an established history
of strong empirical performance on natural images, but until now
little was known about how these methods performed asymptotically,
especially with respect to related methods based on computational
harmonic analysis (\eg wavelet or curvelet denoising).

 Both our bounds and the bounds in
  \cite{Maleki_Narayan_Baraniuk11} suggest that NLM has some
  limitations for piecewise smooth images when the noise is not small
  (\ie when YF cannot perform effectively).  When the noise is small,
  we note that YF is a special case of NLM with a patch size of one
  pixel, and the performance of NLM hinges upon our ability to measure
  the similarity of two patches based on noisy observations. In low
  noise, this similarity can already be estimated quite accurately
  with a single pixel patch. In stronger noise, the similarity
  measured through larger patches is more robust to noise --- but
  larger patches also introduce some bias. This results in an elbow in
  the performance bounds for NLM. Recent empirical results suggest
  that these limitations can be mitigated by adapting the kernel shape
  \cite{Takeda_Farsiu_Milanfar07}, the patch shape
  \cite{Deledalle_Duval_Salmon11}, or spatial bandwidth $h$
  \cite{Kervrann_Boulanger06}; a theoretical understanding of this
  kind of adaptation remains an open problem.

There are several distinctions between our work and the
closely related work in \cite{Maleki_Narayan_Baraniuk11} that bear
mentioning.  First, we consider the {\em cartoon model}, where the
functions are piecewise H\"older smooth images with a discontinuity
set corresponding to a Lipschitz mapping of the unit ball, while
Maleki {\em et al.} \cite{Maleki_Narayan_Baraniuk11} consider the
{\em horizon edge model}, where the functions are piecewise constant
with a discontinuity set corresponding to the graph of a Lipschitz
function.  Though they actually consider smoother edges, their
analysis reduces to the case of Lipschitz smoothness.  We consider
images in arbitrary dimension --- showing that NLM behaves differently
when $d \ge 3$ --- while Maleki {\em et al.}~consider the case of
2D images ($d=2$).  Because they consider functions that are piecewise
constant, they use the weighted average version
\eqref{eq:neighbor_filter} without spatial localization (\ie $h=\infty$).
Because we focus on smooth --- not necessarily constant --- regions, we
need to localize both YF and NLM. Applying the more complex LPR
\eqref{local_poly} enables YF to adapt to the degree of
regularity in each smooth region.  We note that Maleki et
al.~\cite{Maleki_Narayan_Baraniuk11} do not consider the case of low
noise and simply show that YF achieves the same performance as LF ---
which is also our conclusion in strong noise.  

Moreover to simplify the
analysis, Maleki {\em et al.}~consider a slightly modified version
of  NLM and derive lower bounds for oracle versions of YF
and NLM.  The lower bounds for NLM were also challenging for us and we
only provide heuristics.  We mention that our results imply that the
simpler NLM-average achieves the same performance as NLM in the horizon
model.

Let us also comment on the work of Tsybakov \cite{Tsybakov89}.  This
paper suggests and analyzes (within the cartoon model) a method very
similar to \NLMm, based on medians rather than means.  The method is
based on non-overlapping patches.  This allows the method to be
applicable in situations where the noise distribution is heavy-tailed.
(We were not aware of this work when we prepared our paper.)

Our analysis of NLM for  image classes with thin features or
regular patterns is also a significant novel aspect of our work.  
Though we expect wavelets are near-optimal for cartoon images when the discontinuity is Lipschitz, 
NLM has a significant empirical advantage over wavelets for certain kinds of repeating textures. 
We develop a model for images
with these features, and note that it does not approach cartoon or
horizon model asymptotically.  For this image class, we demonstrate
that NLM performs as well as it does for the cartoon
class.

The current bounds are based on ideal bandwidths which depend on the
unknown smoothness parameter $\alpha$. Thus we have demonstrated that
the adaptive filtering techniques considered adapt to the
discontinuity $\Omega$, but not to $\alpha$. We anticipate that
adaptivity to $\alpha$ is indeed possible and leave that analysis for
future work.

We note that NLM is not the current state-of-the-art image
  denoising method in common use. More evolved patch-based methods
  utilize sparse representations of patches, adaptive kernel
  bandwidths, and adaptive patch shapes (\lcf
  \cite{Dabov_Foi_Katkovnik_Egiazarian07,Dabov_Foi_Katkovnik_Egiazarian09,Mairal_Bach_Ponce_Sapiro_Zisserman09,Deledalle_Duval_Salmon11b,Spira_Kimmel05,Takeda_Farsiu_Milanfar07}). While
  these aspects are not considered in our analysis, the theoretical
  insights provided by this paper may potentially lead to an improved
  understanding of a broad class of patch-based image denoising
  methods and subsequently better algorithms (\lcf \cite{Salmon_Willett_AriasCastro12} for possible directions).

\section{Proofs}
\label{sec:proofs}

In this section, $C, C_1, C_2, \dots$ denote finite positive constants
that do not change with $n$ and whose actual value may change with
each appearance.

\subsection{Preliminary results}

We first gather some basic results.

\subsubsection{Some analysis}

Functions in $\cH_d(\alpha, \cf)$ are uniformly well-approximated
locally by polynomials of degree $\afloor$, specifically their Taylor
expansions.  For $g \in \cH_d(\alpha, \cf)$ and $x \in [0,1]^d$, the
Taylor expansion of $g$ at $x$ of degree $t \in \N$ is defined as
follows:
$$T^r_x g(x') = \sum_{|s| \leq t} g^{(s)}(x)  \ \prod_{i=1}^d \frac{(x_i' - x_i)^{s_i}}{s_i!}.$$

\begin{lem} \label{lem:taylor} For any $g \in \cH_d(\alpha, \cf)$,
$$
|g(x') - T^{\afloor}_x g (x')| \leq c_{\alpha} C_0 \|x' - x\|_{\infty}^\alpha, \quad \forall x, x' \in [0,1]^d,
$$
where 
\[
c_{\alpha} := \sum_{s\in\N^d : |s| = \afloor} \frac{1}{s_1! \cdots s_d!}.
\]
\end{lem}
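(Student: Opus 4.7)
The plan is to apply the multivariate Taylor theorem with Lagrange-type remainder at order $\afloor-1$, and then exploit the H\"older condition on the top-order derivatives to absorb the gap between the Taylor polynomial of degree $\afloor-1$ and of degree $\afloor$.

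Concretely, since $g$ is $\afloor$-times differentiable on $[0,1]^d$, I would write, for any $x,x'\in[0,1]^d$,
\[
g(x') \;=\; T^{\afloor-1}_x g(x') \;+\; \sum_{|s|=\afloor} \frac{(x'-x)^s}{s_1!\cdots s_d!}\, g^{(s)}(\xi),
\]
where $\xi$ lies on the segment joining $x$ and $x'$ (this is the standard multivariate Taylor formula with Lagrange remainder, obtained by applying the one-dimensional Taylor theorem to $t\mapsto g(x+t(x'-x))$). Subtracting the degree-$\afloor$ Taylor polynomial $T^{\afloor}_x g(x')$ from both sides collapses all lower-order terms and leaves only the top-order difference:
\[
g(x') - T^{\afloor}_x g(x') \;=\; \sum_{|s|=\afloor} \frac{(x'-x)^s}{s_1!\cdots s_d!}\,\bigl[g^{(s)}(\xi)-g^{(s)}(x)\bigr].
\]

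From here the proof is a direct estimate. Using the multi-index bound $|(x'-x)^s|=\prod_i|x'_i-x_i|^{s_i}\le \|x'-x\|_\infty^{|s|}=\|x'-x\|_\infty^{\afloor}$, the H\"older condition \eqref{eq:deriv_max} on derivatives of order exactly $\afloor$, and the fact that $\xi$ on the segment satisfies $\|\xi-x\|_\infty\le \|x'-x\|_\infty$, I get
\[
\bigl|g(x') - T^{\afloor}_x g(x')\bigr| \;\le\; \sum_{|s|=\afloor}\frac{1}{s_1!\cdots s_d!}\,\|x'-x\|_\infty^{\afloor}\cdot C_0\,\|x'-x\|_\infty^{\alpha-\afloor} \;=\; c_\alpha C_0\,\|x'-x\|_\infty^{\alpha},
\]
which is the claimed inequality.

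There is no real obstacle here; this is a textbook consequence of Taylor's theorem for H\"older classes. The only minor care point is the boundary case $\alpha\in\N$, where $\afloor=\alpha-1$ and \eqref{eq:deriv_max} is a Lipschitz-type condition on $\afloor$-th derivatives rather than a pure H\"older bound, but the same argument applies verbatim since the right-hand side of \eqref{eq:deriv_max} is still $C_0\|x-x'\|_\infty^{\alpha-\afloor}$ by definition.
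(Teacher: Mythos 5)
Your proof is correct and is essentially identical to the paper's: both invoke the multivariate Taylor formula with Lagrange remainder to obtain $g(x') - T^{\afloor}_x g(x') = \sum_{|s|=\afloor} \frac{(x'-x)^s}{s!}\bigl[g^{(s)}(\xi)-g^{(s)}(x)\bigr]$ for some $\xi$ on the segment, then bound $|(x'-x)^s|$ by $\|x'-x\|_\infty^{\afloor}$ and apply the H\"older condition \eqref{eq:deriv_max} together with $\|\xi-x\|_\infty\le\|x'-x\|_\infty$. The only cosmetic difference is that you derive the key identity by subtracting the degree-$\afloor$ polynomial from the degree-$(\afloor-1)$ expansion, whereas the paper states it directly.
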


\begin{proof} Though this sort of result is well-known, we provide a
  proof for completeness.  A Taylor approximation of degree $\afloor$
  gives:
$$g(x') =T^{\afloor}_x g (x') + \sum_{|s| = \afloor} (g^{(s)}(z) - g^{(s)}(x))  \ \prod_{i=1}^d \frac{(x_i' - x_i)^{s_i}}{s_i!},$$
for some $z$ on the segment joining $x$ and $x'$.
Hence,
$$|g(x') - T^{\afloor}_x g (x')| \leq c_{\alpha} \|x' - x\|_{\infty}^{\afloor} \max_{|s| = \afloor} |g^{(s)}(z) - g^{(s)}(x)|.$$
Now apply \eqref{eq:deriv_max} and the fact that $\|z - x\|_{\infty} \leq \|x' - x\|_{\infty}$ to get
$$|g^{(s)}(z) - g^{(s)}(x)| \leq C_0 \|x' - x\|_{\infty}^{\alpha - \afloor}, \quad \forall s \in \N^d, |s| = \afloor.\, \vspace{-0.84cm} $$ 
\hspace{12.85cm}\end{proof} 

\subsubsection{Some geometry}
For a measurable set $A \subset \R^d$, \beq \label{rho} \rho(A) :=
\inf_{h \in (0,1)} \inf_{x \in A} \sup
\left\{\frac{\Vol(B(y,s))}{\Vol(B(x,h))} : 
  B(y,s) \subset B(x,h) \cap A\right\}.  \eeq
The quantity $\rho(A)$ provides some measure of how irregular the boundary of $A$ is.  
The following lemma
bounds $\rho$ from below for sets whose boundary is sufficiently
regular.

\begin{lem} \label{lem:rho} Let $\phi : \R^d \to \R^d$ be injective,
  with $\phi$ and $\phi^{-1}$ both $C$-Lipschitz.  Then for $\Omega =
  \phi(B(0,1))$ and $\rho$ defined in \eqref{rho}, we have
  $\min(\rho(\Omega), \rho(\Omega^c)) \geq (2 C)^{-d}$.
\end{lem}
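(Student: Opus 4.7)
The plan is to produce, for each $x \in \Omega$ and $h \in (0,1)$, a sup-norm ball $B(y,s) \subset B(x,h) \cap \Omega$ with $s/h \ge 1/(2C)$, so that the volume ratio is at least $(2C)^{-d}$. Because $\phi$ is a bi-Lipschitz bijection of $\R^d$ (injectivity plus $C$-Lipschitz bounds on both $\phi$ and $\phi^{-1}$ in sup norm), it is in particular a homeomorphism and hence $\partial\Omega = \phi(\partial B(0,1))$; the same construction applied to the bi-Lipschitz chart $\phi|_{B(0,1)^c}\colon B(0,1)^c \to \Omega^c$ then yields the bound for $\Omega^c$, so I focus on $\Omega$.

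For the main construction, let $u_0 := \phi^{-1}(x) \in B(0,1)$ and look for a preimage sup-norm ball $B(v,t)$ with $B(v,t) \subset B(u_0, h/C) \cap B(0,1)$. The inclusion $B(v,t) \subset B(0,1)$ ensures $\phi(B(v,t)) \subset \Omega$, while $B(v,t) \subset B(u_0, h/C)$ combined with $\phi$ being $C$-Lipschitz ensures $\phi(B(v,t)) \subset B(x,h)$. The existence of such $(v,t)$ reduces to a coordinate-wise geometric fact: since $u_0$ lies strictly inside $B(0,1)$, each coordinate interval of $B(u_0, h/C) \cap B(0,1)$ has length at least $h/C$, so taking $v$ coordinate-wise at the midpoint of each such interval produces a sup-norm cube of half-side $t = h/(2C)$ inside the intersection. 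Finally, using the $C$-Lipschitz property of $\phi^{-1}$ to derive $\phi(B(v,t)) \supset B(\phi(v), t/C)$, I set $y := \phi(v)$ and $s := t/C$; this gives $B(y,s) \subset B(x,h) \cap \Omega$.

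The main obstacle is a bookkeeping issue: the chain just described pays one Lipschitz factor of $C$ when pushing $B(v,t)$ forward to ensure $\phi(B(v,t)) \subset B(x,h)$, and a second factor of $C$ when inscribing a sup-norm ball inside the distorted image $\phi(B(v,t))$, yielding a volume ratio of only $(2C^2)^{-d}$ rather than the claimed $(2C)^{-d}$. Saving the extra factor of $C$ will require a more delicate argument, for instance constructing $y$ directly in image space via the inner-parallel-body inclusion $\{y \in \Omega : \dist(y, \partial\Omega) \ge s\} \supset \phi(\{v \in B(0,1) : \dist(v, \partial B(0,1)) \ge Cs\})$ (which uses only the Lipschitz property of $\phi^{-1}$), combined with a careful choice of $y$ --- close to $x$ in sup norm but with preimage kept deep enough inside $B(0,1)$ --- that incurs just one further factor of $C$. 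Pinpointing this refinement, especially in the worst case where $u_0$ approaches $\partial B(0,1)$, is the technical heart of the proof.
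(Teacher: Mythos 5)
Your construction is the paper's construction, almost line for line: the paper also passes to the preimage, sets $z:=\phi^{-1}(x)$, $z':=(1-h/(2C))\,z$ and $t:=h/(2C)$ so that $B(z',t)\subset B(z,h/C)\cap B(0,1)$ (your coordinate-wise midpoint cube is an equivalent way of producing the same half-side $t=h/(2C)$), pushes forward using the $C$-Lipschitz bound on $\phi$ to land inside $B(x,h)\cap\Omega$, and then inscribes $B(\phi(z'),t/C)$ using the $C$-Lipschitz bound on $\phi^{-1}$. Consequently the ``obstacle'' you flag in your closing paragraph is not a defect of your argument relative to the paper's: the paper's own proof ends with $y:=\phi(z')$ and $s:=t/C=h/(2C^{2})$, hence it too only establishes a volume ratio of $(2C^{2})^{-d}$, even though the statement asserts $(2C)^{-d}$. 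There is no hidden refinement to find --- the paper simply does not reconcile the two constants, and since $C\ge 1$ is forced by $\|u-v\|\le C\|\phi(u)-\phi(v)\|\le C^{2}\|u-v\|$, the bound actually proved, $(2C^{2})^{-d}$, is genuinely weaker than the one claimed. This is harmless for everything downstream: \lemref{rho} is invoked (in the proofs of \thmref{NLM}, \thmref{thin} and \prpref{pattern}) only to supply \emph{some} positive constant depending on $d$ and $\cf$, which enters the choice of $h_y$ and various multiplicative constants but never the rates. So the right fix is to state and prove the lemma with $(2C^{2})^{-d}$ (adjusting the constant $(2\cf)^{-d}$ where it is quoted), not to pursue the inner-parallel-body argument you sketch --- note that that route still pays one factor of $C$ to push $\phi^{-1}(y)$ a depth $Cs$ inside $B(0,1)$ and, in the worst case, another factor of $C$ when mapping that displacement forward, so it returns you to $s\asymp h/C^{2}$ anyway.
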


\begin{proof} 
  Fix $x \in \Omega$ and $h > 0$.  Since $\phi$ is Lipschitz with
  constant $C$, we have $\phi(B(\phi^{-1}(x), h/C)) \subset B(x, h)$.
  Note that $z := \phi^{-1}(x) \in B(0,1)$ and, by the triangle
  inequality, $B(z, h/C) \cap B(0,1) \supset B(z', t)$, where $z' :=
  (1-h/(2C)) z$ and $t := h/(2C)$.  Because $\phi^{-1}$ is
  $C$-Lipschitz, we have $\phi^{-1}(B(\phi(z'), t/C)) \subset B(z',
  t)$, so that
\[
B(y, s) \subset \phi(B(z',t)) \subset \phi(B(z, h/C) \cap B(0,1)) \subset B(x,h) \cap \Omega,
\]
where $y := \phi(z')$ and $s := t/C$.
We obtain a lower bound for $\Omega^c$ 
in a
similar way.
\end{proof}  \medskip

Next is a result on the number of sample points within a certain
distance of a subset.  Let $\cX_{n^d}$ be the set of sample points,
that is, $\cX_{n^d} = \{x_i : i \in I_n^d\}$.

\begin{lem} \label{lem:count} For any subset $A \subset (0,1)^d$ of
  the form $A = B(A', \eta)$ for some $A' \subset (0,1)^d$ and $4/n
  \leq \eta \leq 1$,
$$
8^{-d} n^d \Vol(A) \leq |A \cap \cX_{n^d}| \leq 4^d n^d \Vol(A).
$$ 
\end{lem}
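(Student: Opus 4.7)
The plan is to compare the sample count with $n^d \Vol(A)$ by tiling $\R^d$ with the Voronoi cells $C_i := x_i + [-1/(2n),1/(2n)]^d$ associated to the extended grid; each $C_i$ has volume $n^{-d}$ and the cells $\{C_i : i \in I_n^d\}$ partition $(0,1)^d$. Throughout, I will repeatedly use that $\eta \ge 4/n$ gives $1/(2n) \le \eta/8$, and the ``doubling'' inequality
\[
\Vol(B(A',2r)) \le 2^d \, \Vol(B(A',r)),
\]
which follows from the decomposition $[-2r,2r]^d = \bigcup_{\sigma \in \{-1,+1\}^d}(r\sigma + [-r,r]^d)$ applied to $B(A',2r) = A' + [-2r,2r]^d$.

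\emph{Upper bound.} For any $x_i \in A \cap \cX_{n^d}$, the cell $C_i \subset x_i + [-1/(2n),1/(2n)]^d$ is contained in $B(A',\eta+1/(2n)) \subset B(A',2\eta)$. The cells being disjoint,
\[
|A \cap \cX_{n^d}|/n^d \;=\; \textstyle \sum_{x_i \in A}\Vol(C_i) \;\le\; \Vol(B(A',2\eta)) \;\le\; 2^d \, \Vol(A),
\]
and in particular the bound $|A\cap \cX_{n^d}| \le 4^d n^d \Vol(A)$ is obtained.

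\emph{Lower bound.} Introduce the erosion $A_- := \{y \in \R^d : B(y,1/(2n)) \subset A\}$, and note that $A_- \supset B(A',\eta - 1/(2n)) \supset B(A',\eta/2)$. For any $y \in A_-\cap (0,1)^d$, the unique $x_i \in \cX_{n^d}$ with $y \in C_i$ satisfies $\|x_i-y\|_\infty \le 1/(2n)$, so $x_i \in B(y,1/(2n)) \subset A$. Hence $A_-\cap(0,1)^d \subset \bigcup_{x_i \in A\cap\cX_{n^d}} C_i$, giving
\[
|A \cap \cX_{n^d}|/n^d \;\ge\; \Vol(B(A',\eta/2) \cap (0,1)^d).
\]
It remains to control $\Vol(B(A',\eta/2)\cap(0,1)^d)$ in terms of $\Vol(A) = \Vol(B(A',\eta))$. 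Doubling yields $\Vol(B(A',\eta/2)) \ge 2^{-d}\Vol(A)$, so it suffices to show $\Vol(B(A',\eta/2) \cap (0,1)^d) \ge 2^{-d}\Vol(B(A',\eta/2))$. This I will establish by a reflection argument: since $\eta \le 1$, $E := B(A',\eta/2) \subset [-1/2,3/2]^d$; let $\rho : \R^d \to [0,1]^d$ be the coordinatewise reflection through $\{0\}$ and $\{1\}$, which is $1$-Lipschitz in sup-norm and fixes $A' \subset (0,1)^d$. Because $\|\rho(y) - a\|_\infty = \|\rho(y)-\rho(a)\|_\infty \le \|y-a\|_\infty$ for $a \in A'$, we have $\rho(E) \subset E \cap [0,1]^d$. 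Partitioning $[-1/2,3/2]^d$ into the $3^d$ unit sub-cubes on which $\rho$ acts by isometry, and noting that for $\eta/2 \le 1/2$ every point of $(0,1)^d$ has at most $2^d$ preimages in $[-1/2,3/2]^d$, one obtains $\Vol(E) \le 2^d\,\Vol(\rho(E)) \le 2^d\,\Vol(E \cap (0,1)^d)$.

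Combining, $|A\cap\cX_{n^d}|/n^d \ge 2^{-d} \cdot 2^{-d} \Vol(A) = 4^{-d}\Vol(A) \ge 8^{-d}\Vol(A)$, which is the claimed bound.

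The main subtlety is the reflection step: one must account for the possibility that $A'$ hugs $\partial(0,1)^d$, in which case a large fraction of $B(A',\eta)$ could a priori escape the sampling domain. Controlling this loss by the explicit multiplicity bound on $\rho$ is what forces the gap between $4^{-d}$ and $8^{-d}$ (the stated constant absorbing the two doublings plus the reflection factor).
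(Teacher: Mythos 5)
Your proof is correct, but it follows a genuinely different route from the paper's. The paper takes a maximal $\eta$-packing $z_1,\dots,z_k$ of $A'$, sandwiches $A$ between $\bigcup_j B(z_j,\eta/2)$ (disjoint) and $\bigcup_j B(z_j,2\eta)$, and then converts between volume and sample count via the elementary per-ball estimate $\eta^d n^d \le |B(z,\eta)\cap\cX_{n^d}| \le (2\eta)^d n^d$; the constants $8^{-d}$ and $4^d$ come out of chaining these two sandwiches. You instead compare $|A\cap\cX_{n^d}|/n^d$ directly to Lebesgue volume via the grid cells, bounding it above by $\Vol(B(A',2\eta))$ and below by $\Vol(B(A',\eta/2)\cap(0,1)^d)$, and then use the doubling inequality $\Vol(B(A',2r))\le 2^d\Vol(B(A',r))$ plus a folding map to return to $\Vol(A)$. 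Your version yields sharper constants ($2^d$ and $4^{-d}$) and, notably, never uses the hypothesis $A\subset(0,1)^d$ — it proves the bound for any $A=B(A',\eta)$ with $A'\subset(0,1)^d$, counting only the samples inside the unit cube. That extra generality is actually welcome, since the lemma is later applied to sets like $B(\partial\Omega,h)$ that need not stay inside $(0,1)^d$; on the other hand, under the lemma's literal hypothesis $B(A',\eta)\subset(0,1)^d$ your entire reflection step is vacuous ($B(A',\eta/2)\subset A\subset(0,1)^d$ already), so the ``main subtlety'' you flag at the end is not one the stated lemma forces on you. A few harmless slips: the $3^d$ boxes on which $\rho$ acts isometrically are not unit sub-cubes (two of the three one-dimensional pieces have length $1/2$); the fold point $t=1/2$ has three preimages rather than two; and there are open-versus-closed boundary cases (e.g., $\|x_i-y\|_\infty=1/(2n)$ exactly) — all of Lebesgue measure zero and immaterial to the volume comparisons.
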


\begin{proof} Let $z_1, \dots, z_k \in (0,1)^d$ be a maximal
  $\eta$-packing for $A'$ (\ie the balls $ B(z_j, \eta/2)$ for
  $j=1,\dots,k$ are disjoint and included in $A' \subset A$, and for
  any $z\in A'$, there is $j$ such that $z \in B(z_j, \eta)$).
By the triangle inequality, we have
$$\bigcup_{j=1,\dots,k}  B(z_j, \eta/2) \subset A \subset \bigcup_{j=1,\dots,k}  B(z_j, 2 \eta).$$
On the one hand, taking volumes on all sides, we get $k \eta^d \leq
\Vol(A) \leq k 2^d (2 \eta)^d$, since the unit ($\|\cdot\|_{\infty}$) ball
has volume $2^d$.  This turns into $4^{-d} \Vol(A) \leq k \eta^d \leq
\Vol(A)$.  On the other hand, counting sample points on all sides, using
the fact that
\[
\eta^d n^d \leq |B(z, \eta) \cap \cX_{n^d}| \leq (2\eta)^d n^d, \quad \forall z \in (0,1)^d, \ \forall \eta \in (2/n, 1),
\]
we get
$$
k (\eta/2)^d n^d \leq \sum_{j=1}^k |B(z_j, \eta/2) \cap \cX_{n^d}| \leq |A \cap
\cX_{n^d}| \leq \sum_{j=1}^k |B(z_j, 2 \eta) \cap \cX_{n^d}| \leq k (4\eta)^d n^d.
$$ 
Combining these, we get the desired result. \end{proof} 

\begin{lem} \label{lem:vol} Suppose $1 \le d_0 \le d$ are integers and
  let $\phi : \R^{d_0} \to \R^d$ be injective, with $\phi$ and
  $\phi^{-1}$ (on the range of $\phi$) both $C$-Lipschitz with $C \ge
  1$.  Then there is another constant $C' > 1$ such that, for $A :=
  \phi((0,a)^{d_0})$ and $h \in (0, 1)$,
\[
\frac1{C'} a^{d_0}h^{d-d_0} \leq \Vol(B(A,h)) \leq C' a^{d_0}h^{d-d_0}.
\]
Consequently, if $\phi : \R^{d} \to \R^d$ is as above and $A :=
\phi(\partial B(0,1))$, the result holds with $d - d_0 = 1$.
\end{lem}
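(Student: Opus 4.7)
The plan is to prove matching upper and lower bounds of order $a^{d_0}h^{d-d_0}$ on $\Vol(B(A,h))$, using the two directions of the bi-Lipschitz hypothesis separately: the Lipschitz property of $\phi$ drives a covering argument yielding the upper bound, while that of $\phi^{-1}$ drives a packing argument yielding the lower bound. The main technical obstacle is handling the regime of $h$ relative to $a$: the argument is cleanest when $h \lesssim a$, in which case both bounds have the dominant order $a^{d_0}h^{d-d_0}$; when $h$ exceeds $a$, the set $A$ already lies in a supnorm ball of radius $O(a+h)=O(h)$ and the claim has to be absorbed into the constant $C'$. The rest is careful bookkeeping of the Lipschitz- and dimension-dependent constants.

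For the upper bound, partition $(0,a)^{d_0}$ into supnorm cubes $Q_k$ of side $h/C$, using $N \preceq (a/h)^{d_0}$ cubes. Since $\phi$ is $C$-Lipschitz, $\phi(Q_k)$ has supnorm diameter at most $h$, so its $h$-neighborhood is contained in a supnorm ball of radius $2h$, of volume $(4h)^d$. Using $B(A,h)=\bigcup_k B(\phi(Q_k),h)$ and summing over $k$ yields
\[
\Vol(B(A,h)) \le N(4h)^d \preceq (a/h)^{d_0}h^d = a^{d_0}h^{d-d_0},
\]
with a constant depending only on $C$, $d$, and $d_0$.

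For the lower bound, pick a maximal $(2Ch)$-separated subset $\{z_j\}\subset(0,a)^{d_0}$ in the supnorm; a standard volume argument gives $N \gtrsim (a/h)^{d_0}$. Because $\phi^{-1}$ is $C$-Lipschitz, $\|\phi(z_i)-\phi(z_j)\|_\infty \ge \|z_i-z_j\|_\infty/C \ge 2h$ for $i\ne j$, so the supnorm balls $B(\phi(z_j),h)$ are pairwise disjoint; each sits inside $B(A,h)$ since $\phi(z_j)\in A$, and each has volume $(2h)^d$. Summing gives $\Vol(B(A,h)) \ge N(2h)^d \gtrsim a^{d_0}h^{d-d_0}$, and taking $C'$ large enough absorbs all constants. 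For the consequence with $A=\phi(\partial B(0,1))$ and $\phi:\R^d\to\R^d$ bi-Lipschitz, decompose the supnorm unit sphere $\partial B(0,1)$ into its $2d$ faces, each an affine-isometric image of $(-1,1)^{d-1}$. Composing with $\phi$ writes $A$ as a finite union of bi-Lipschitz images of the $(d-1)$-cube; applying the main estimate to each piece with $a=2$ and $d_0=d-1$ and taking a union bound yields $\Vol(B(A,h))\asymp h$, which is the stated claim with $d-d_0=1$.
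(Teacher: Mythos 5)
Your proof is correct and follows essentially the same route as the paper's: a maximal packing/covering of $(0,a)^{d_0}$ pushed forward through $\phi$, with the $C$-Lipschitz bound on $\phi$ giving the covering (upper) bound and that on $\phi^{-1}$ giving the separation needed for the packing (lower) bound, followed by the same decomposition of $\partial B(0,1)$ into finitely many Lipschitz images of a $(d-1)$-cube for the second claim. The only differences are cosmetic --- you use a cube partition for the upper bound and a more widely separated net for the lower bound, which if anything makes the disjointness cleaner than in the paper --- and your explicit caveat about the regime $h \gtrsim a$ flags an imprecision that the paper's own step $m \asymp (a/h)^{d_0}$ silently shares.
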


\begin{proof} 
  We first observe that, for any $z \in \bbR^{d_0}$ and $h > 0$, since
  $\phi$ is $C$-Lipschitz, \beq \label{phi-ub} \phi(B(z,h)) \subset
  B(\phi(z), C h).  \eeq 
Now, let $z_1, \dots, z_m$ denote a maximal
  $h$-packing of $(0,a)^{d_0}$.  Note that $m \asymp (a/h)^{d_0}$ when
  $h \leq 1$.  By definition $\|z_i - z_j\| \ge h$, so that
  $\|\phi(z_i) - \phi(z_j)\| \ge h/C$ since $\phi^{-1}$ is
  $C$-Lipschitz with $C \ge 1$.  Hence,
\[
\bigsqcup_{i=1,\dots,k} B(\phi(z_i), h/C) \subset B(A, h/C) \subset B(A,h),
\]
implying
\[
\sum_{i=1}^m \Vol(B(\phi(z_i), h/C)) \le \Vol(B(A, h)).
\]
We then conclude by the fact that $\sum_{i=1}^m \Vol(B(\phi(z_i), h/C))
\asymp m h^d \asymp a^{d_0}h^{d-d_0}$.
For the upper bound, we use the fact that $(0,a)^{d_0} \subset \cup_{i=1,\dots,k} 
B(z_i, h)$, so that
\[
A \subset \bigcup_{i=1,\dots,k} \phi(B(z_i, h)) \subset \bigcup_{i=1,\dots,k} B(\phi(z_i), C h),
\]
by \eqref{phi-ub}.  Hence, using the triangle inequality,
\[
\Vol(B(A, h)) \le \sum_{i=1}^m \Vol(B(\phi(z_i), C h + h)) \asymp m h^d \asymp a^{d_0}h^{d-d_0}.
\]
For the second part, we use the fact that $\partial B(0,1) =
\cup_\ell \phi_\ell((0,1)^{d-1})$ for a finite set of functions
$\phi_\ell$ satisfying the requirements and the fact that the composition $\phi \circ \phi_\ell$ is also Lipschitz.
\end{proof}

\subsubsection{Some statistics}

We establish here some bounds on the point-wise MSE \eqref{bias-var}
of LPR \eqref{local_poly}.  We mention that much finer results
exist in dimension $d=1$ for the case where the underlying function
$f$ is smooth; see~\cite{Fan_Gijbels96} and references therein.

\medskip
\begin{lem}[Variance] \label{lem:local_poly_var} 

For any sufficiently
  large constant $C>0$, depending only on  $d, r,$ the
  following is true.  Consider the LPR estimator of the form
  \eqref{local_poly}, with weights $\wnlm_{i,j} \in \{0,1\}$.  Assume
  that $B_i^{\rm in} \subset A_i := \{j: \wnlm_{i,j} = 1\} \subset
  B_{i,h} := \{j: x_j \in B(x_i, h)\}$, for some discrete ball
  $B_i^{\rm in}$ satisfying $|B_i^{\rm in}| \ge |B_{i,h}|/C$ for some constant $C$.

\beq \label{poly_var_lb} \frac1{C} \sigma^2 (nh)^{-d} \leq
  \Var(\wh{f}_i) \leq C \sigma^2 (nh)^{-d}.  
\eeq
\end{lem}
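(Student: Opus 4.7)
The plan is to express the variance of $\wh{f}_i$ in closed form via weighted least squares, rescale to a dimensionless problem, and sandwich the extreme eigenvalues of the rescaled monomial Gram matrix by comparison with an integral over a Euclidean ball.

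First, let $\bX$ be the design matrix with rows $\bu_j := ((x_j-x_i)^s)_{|s|\le r}$ for $j\in A_i$, and let $\bW = \mathrm{diag}(\wnlm_{i,j})$; since $\wnlm_{i,j}\in\{0,1\}$ we have $\bW^2=\bW$, so the normal equations give
\[
\widehat{\ba}^{(i)} \;=\; \bM^{-1}\bX^T\bW\by, \qquad \bM := \bX^T\bW\bX = \sum_{j\in A_i}\bu_j\bu_j^T,
\]
and $\wh{f}_i$ is the coordinate indexed by $s=0$, i.e.\ $\wh{f}_i = \be_1^T\bM^{-1}\bX^T\bW\by$ with $\be_1$ the corresponding unit vector. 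Consequently $\Var(\wh{f}_i) = \sigma^2[\bM^{-1}]_{11}$. (If $\bM$ is singular, $\wh{f}_i = y_i$ by convention and the claim is trivial.) Introduce the diagonal scaling $D_h$ with entry $h^{|s|}$ in slot $(s,s)$, and rescaled monomials $\tilde{\bu}_j := D_h^{-1}\bu_j = \tilde{\bu}(\tilde{x}_j)$ at $\tilde{x}_j := (x_j-x_i)/h \in [-1,1]^d$. Since $D_h\be_1 = \be_1$, the matrix $\tilde{\bM} := \sum_{j\in A_i}\tilde{\bu}_j\tilde{\bu}_j^T = D_h^{-1}\bM D_h^{-1}$ satisfies $\Var(\wh{f}_i) = \sigma^2[\tilde{\bM}^{-1}]_{11}$, and the task reduces to showing $\lambda_{\min}(\tilde{\bM}) \asymp \lambda_{\max}(\tilde{\bM}) \asymp (nh)^d$.

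The upper eigenvalue bound is routine: $\|\tilde{\bu}_j\|_2^2$ is bounded by a constant $q$ depending only on $d,r$, and $|A_i|\le|B_{i,h}|\preceq(nh)^d$ by \lemref{count}, so $\lambda_{\max}(\tilde{\bM}) \le \tr(\tilde{\bM}) \preceq (nh)^d$; combined with $[\tilde{\bM}^{-1}]_{11} \ge 1/\lambda_{\max}(\tilde{\bM})$, this delivers the variance lower bound in \eqref{poly_var_lb}. For the lower eigenvalue bound, fix $\ba$ on the Euclidean unit sphere and set $p_\ba(\tilde{x}) := \sum_s a_s\tilde{x}^s$; then
\[
\ba^T\tilde{\bM}\ba \;\ge\; \sum_{j\in B_i^{\rm in}} p_\ba(\tilde{x}_j)^2.
\]
By hypothesis $|B_i^{\rm in}|\asymp(nh)^d$, and since $B_i^{\rm in}$ is a discrete ball \lemref{count} forces its underlying Euclidean ball to have radius $\asymp h$; hence $\{\tilde{x}_j : j\in B_i^{\rm in}\}$ is a portion of the lattice $(nh)^{-1}\Z^d$ lying in a Euclidean ball $\tilde{B}$ of radius of order $1$. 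The rescaled empirical sum $(nh)^{-d}\sum_{j\in B_i^{\rm in}} p_\ba(\tilde{x}_j)^2$ is a Riemann sum for $\int_{\tilde{B}} p_\ba(\tilde{x})^2\,d\tilde{x}$, and this limit defines a quadratic form in $\ba$ that is strictly positive on the unit sphere because any polynomial of degree $\le r$ vanishing on an open ball is identically zero. A compactness argument on the finite-dimensional space of polynomials then gives $\lambda_{\min}(\tilde{\bM}) \succeq (nh)^d$, hence $[\tilde{\bM}^{-1}]_{11} \preceq (nh)^{-d}$ and the variance upper bound in \eqref{poly_var_lb}.

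The main obstacle is the last eigenvalue bound: one must pass from the discrete sum to the integral \emph{uniformly} in the coefficient vector $\ba$. The cleanest route is the norm-equivalence/compactness argument just sketched, backed by a quantitative Riemann estimate (with error of order $1/(nh)$) to handle small $nh$ and to produce explicit constants depending only on $d$ and $r$. A minor secondary issue is that $B(x_i,h)$ may be clipped by $\partial(0,1)^d$ when $x_i$ lies near the boundary, but this only affects the constants in \lemref{count} and is absorbed into the hypothesis $|B_i^{\rm in}|\ge|B_{i,h}|/C$.
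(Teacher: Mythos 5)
Your proposal is correct and follows essentially the same route as the paper's proof: express the variance as $\sigma^2$ times the $(1,1)$ entry of the inverse Gram matrix, rescale by the diagonal matrix of powers of $h$ (which fixes the first coordinate vector), sandwich the rescaled Gram matrix between the one built on the inner discrete ball and the one on the full ball $B_{i,h}$, and control its extreme eigenvalues by a Riemann-sum comparison with $\int_B p(u)^2\,du$ together with the fact that a degree-$\le r$ polynomial cannot vanish on a nonempty ball, made uniform by compactness. The only cosmetic differences are that you phrase the limit at the level of the quadratic form rather than entrywise convergence of the Gram matrix, and that the paper's balls are sup-norm rather than Euclidean; neither affects the argument.
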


\begin{proof} We assume without loss of generality that $x_i = 0$ and
  drop the subscript $i$ for simplicity.  Below $C$ denotes a generic
  constant that may change with each appearance.
Let
\begin{equation}\label{eq:def_q}
 q = \sum_{s = 0}^r {s + d-1 \choose d-1}={r + d \choose d},
\end{equation}
which is the number of monomials in $d$ variables of degree $r$ or
less.
Let $\bX$ denote the $|A| \times q$ matrix with coefficients $(x_j^s:
j \in A, |s| \leq r)$.  By definition of the local polynomial
estimator \eqref{local_poly} and the usual least squares formula, we
have
\begin{equation}\label{eq:LPR_system}
\wh{f} = \be^T (\bX^T \bX)^{-1} \bX^T \by,
\end{equation}
where $\be = (1, 0, \dots, 0) \in \R^q$ and $\by := (y_j: j \in A)$
(assuming that $\bX$ is full-rank, which we prove further
down).  In particular,
\[
\Var(\wh{f}) = \sigma^2 \be^T (\bX^T \bX)^{-1} \be,
\]
since $y_j = f(x_j) + \eps_j$, with the noise $(\eps_j)$ being uncorrelated and having identical variance $\sigma^2$.  

Let $z_j = x_j/h$ and $\bZ = (z_j^s: j \in A, |s| \leq r)$, and also
let $\bH = \diag(h^{|s|}, |s| \leq r)$, so that $\bX = \bZ \bH$,
leading to \beq \label{poly_varZ} \Var(\wh{f}) = \sigma^2 \be^T
\bH^{-1} (\bZ^T \bZ)^{-1} \bH^{-1} \be = \sigma^2 \be^T (\bZ^T
\bZ)^{-1} \be, \eeq since $\bH^{-1} \be = \be$.  This is because $\bH$ is an invertible diagonal matrix with first element equal to 1.  The
reason we work with $\bZ$ instead of $\bX$ is that, under the
conditions assumed here, $z_j \in [-1,1]^d$
(because $j\in A$) and $(nh)^{-d} \bZ^T \bZ$
is bounded from above and below in terms of its spectrum.  Indeed,
define matrices $\bZ_1 = (z_j^s: j \in B^{\rm in}, |s| \leq r)$,
$\bZ_2 = (z_j^s: j \in A \setminus B^{\rm in}, |s| \leq r)$, $\bZ_3 =
(z_j^s: j \in B_{h}, |s| \leq r)$ and $\bZ_4 = (z_j^s: j \in B_{h}
\setminus A, |s| \leq r)$.  Let $\prec$ denote the ordering for
positive semi-definite matrices.  Since
\[
\bZ_1^T \bZ_1 \prec \bZ_1^T\bZ_1 + \bZ_2^T \bZ_2  = \bZ^T \bZ = \bZ_3^T\bZ_3 - \bZ_4^T \bZ_4 \prec \bZ_3^T \bZ_3,
\]
it suffices that we focus on proving a lower bound on the spectrum of
$\bZ_1^T \bZ_1$ and upper bound on the spectrum of $\bZ_3^T \bZ_3$.
Consider therefore the case where $A$ itself is a discrete ball, say
$A = \{j: x_j \in B(x, a h)\}$, where $a \in (C^{-1/d},1)$ by assumption.  
Let $z = x/h$.  First, assume that $a$ and $h$ remain fixed.  
Then for $s,t \in \N^d$ such that $|s| \vee |t| \leq r$, we
have 
\beq \label{mst} \frac1{(nh)^{d}} (\bZ^T \bZ)_{st} =
(nh)^{-d}\sum_{j \in A} z_j^{s+t} \to M_{st} := \int_{B(z, a)} u^{s+t}
du, \quad \text{ when } nh \to \infty, 
\eeq 
recognizing a Riemann sum
on the LHS.  So, if $\bM = (M_{st} : |s| \vee |t| \leq r)$, we have
the convergence $(nh)^{-d} \ \bZ^T \bZ \to \bM,$ when $nh \to \infty$.
$\bM$ is a well-defined positive semi-definite matrix since its
elements are bounded by 1 --- because $B(z, a) \subset B(0,1)$  ---
so we only need to show that it is positive uniformly over $a \in
(C^{-1/d},1)$.  Let $\lambda_{z,a}$ denote the smallest eigenvalue of
$\bM$ with integral over $B(z, a)$, with $z \in B(0,1)$ and $a \in
(C^{-1/d},1)$.  We want to show that $\lambda_{z,a}$ is bounded away
from 0.  Suppose this is not the case, that there are sequences $(z_m,
a_m)$ such that $\lambda_{z_m,a_m} \to 0$ as $m \to \infty$.  By
compacity, we may assume that $(z_m, a_m) \to (z_\infty, a_\infty) \in
\overline{B(0,1)} \times [C^{-1/d},1]$.
Then $\lambda_{z_\infty,
  a_\infty} = 0$, by continuity.  Let $\bM_\infty$ be the associated
matrix.  Then there is $b_\infty \in \R^q$ nonzero such that
\[
0 = b_\infty^T \bM_\infty b_\infty = \int_{B(z_\infty, a_\infty)} \sum_{s,t} b_{\infty_s} b_{\infty,t} u^{s+t} du = \int_{B(z_\infty, a_\infty)} \Big(\sum_{s} b_{\infty,s} u^{s} \Big)^2 du,
\]
where the sums are over $s \in \N^d$ such that $|s| \leq r$.  This
leads to a contradiction since the polynomial in the second integral
cannot be zero on a nonempty ball.  

So far, we assumed that $a$ and $z$ were fixed.  Assume this is not the case.  The upper bound on the largest eigenvalue of $\bZ^T \bZ$ is bounded in the exact same way, using the fact that $\|z_j\| \le 1$. For the lower bound we still have that
\[
\liminf \ (nh)^{-d}\sum_{j \in A} z_j^{s+t} \ge \inf_{z', a'} \int_{B(z', a')} u^{s+t},
du
\]
where the inf is over $z'$ and $a'$ such that $a' \in
(C^{-1/d},1)$ and $B(z', a') \subset B(0,1)$. Our arguments apply to the RHS. 
We conclude that there is $C_1 \in
(0,\infty)$ such that, for $nh$ large enough, \beq \label{ZZ}
\frac1{C_1} (nh)^{d} \leq \lambda_{\rm min}(\bZ^T \bZ) \leq
\lambda_{\rm max}(\bZ^T \bZ) \leq C_1 (nh)^{d}.  \eeq We then redefine
$C$ as $\max(C, C_1)$ and conclude with \eqref{poly_varZ}.
\end{proof} \medskip

\medskip

\begin{lem}[Bias: Upper Bound] \label{lem:local_poly_ub} Assume that
  $f \in \cF^{\rm cartoon}(\alpha, \cf)$, with foreground $\Omega$,
  and that the conditions of \lemref{local_poly_var} also
  hold. If moreover either $A_i \subset \Omega$ or $A_i \subset
  \Omega^c$, then, for some constant $C>0$, the following inequality holds
  \beq \label{poly_bias_ub} (\E \wh{f}_i - f_i)^2 \le \min(1, C h^{2
    \alpha}).  \eeq
\end{lem}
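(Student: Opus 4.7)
The plan is to exploit two basic properties of local polynomial regression: its linearity in $\by$ and its exact reproduction of polynomials of degree at most $r$. The estimator can be written as $\wh{f}_i = \be^T(\bX^T\bX)^{-1}\bX^T\by = \sum_{j\in A_i} w_{i,j} y_j$, so taking expectations gives $\E\wh{f}_i = \sum_j w_{i,j} f(x_j)$. The reproduction property states: for every polynomial $p$ of degree at most $r$, $\sum_j w_{i,j} p(x_j) = p(x_i)$; indeed, setting $y_j = p(x_j)$ and expanding $p$ around $x_i$ makes the LPR residuals vanish, and the $a_0$ coefficient returned by the minimization is exactly $p(x_i)$.

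Assume without loss of generality $A_i\subset\Omega$, so $f(x_j)=\fin(x_j)$ for every $j\in A_i$. Let $p:=T^{\afloor}_{x_i}\fin$ be the order-$\afloor$ Taylor expansion of $\fin$ at $x_i$; this is a polynomial of degree $\afloor\le r$ satisfying $p(x_i)=\fin(x_i)=f_i$. Subtracting the identity $f_i=\sum_j w_{i,j}p(x_j)$ yields
\[
\E\wh{f}_i - f_i \;=\; \sum_{j\in A_i} w_{i,j}\,\bigl(\fin(x_j) - T^{\afloor}_{x_i}\fin(x_j)\bigr).
\]
Applying \lemref{taylor}, together with $\|x_j-x_i\|_\infty\le h$ for $j\in A_i\subset B_{i,h}$, gives $|\fin(x_j) - T^{\afloor}_{x_i}\fin(x_j)|\le c_\alpha C_0 h^\alpha$, and hence $|\E\wh{f}_i - f_i| \le c_\alpha C_0 h^\alpha \sum_j |w_{i,j}|$.

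It remains to bound $\sum_j|w_{i,j}|$ by an absolute constant. By Cauchy--Schwarz, $\sum_j|w_{i,j}|\le \sqrt{|A_i|}\,\|\bw_i\|_2$, where $\|\bw_i\|_2^2 = \be^T(\bX^T\bX)^{-1}\be$ (this is $\Var(\wh{f}_i)/\sigma^2$). The lower spectral bound on $\bZ^T\bZ$ established in \eqref{ZZ} during the proof of \lemref{local_poly_var} (enabled precisely by the hypothesis $B_i^{\rm in}\subset A_i$) gives $\|\bw_i\|_2^2\le C(nh)^{-d}$, while $|A_i|\le|B_{i,h}|\le C'(nh)^d$ by \lemref{count}. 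Combining these yields $\sum_j|w_{i,j}|=O(1)$, and therefore $(\E\wh{f}_i - f_i)^2\le C h^{2\alpha}$. The trivial part of the bound, $(\E\wh{f}_i - f_i)^2\le 1$, follows from clipping $\wh{f}$ into $[0,1]$ and the fact that $f_i\in[0,1]$.

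The main obstacle is the $\ell^1$ bound on the weight vector. The naive identity $\sum_j w_{i,j}=1$ (itself just the reproduction property applied to constants) is insufficient since individual $w_{i,j}$ can be negative, so one really needs the $\ell^2$ control, which in turn requires the lower eigenvalue bound for $\bZ^T\bZ$. This is exactly the role of the structural assumption $B_i^{\rm in}\subset A_i$: without a sufficiently bulky ``core'' inside the window, the design matrix may degenerate and the bias estimate can fail catastrophically, as will indeed happen for oracles whose windows become slivers near $\partial\Omega$.
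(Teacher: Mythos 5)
Your proof is correct and follows essentially the same route as the paper's: both rest on the polynomial reproduction property of LPR, the Taylor remainder bound of \lemref{taylor}, and the eigenvalue lower bound \eqref{ZZ} from the proof of \lemref{local_poly_var}. The only difference is cosmetic — you control $\sum_j |w_{i,j}|$ via Cauchy--Schwarz, whereas the paper bounds $\|\bZ^T \bg\|_2$ directly and divides by $\lambda_{\rm min}(\bZ^T\bZ)$ — and these yield the same $O(h^\alpha)$ estimate.
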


\begin{proof}
  We continue with the notation introduced in the proof of
  \lemref{local_poly_ub}.  WLOG, assume $A \subset \Omega$.  In
  that case, $f$ is smooth in the window, since $f = \fin$.  By
  \lemref{taylor}, $f(x_j) = T_0^{\afloor} f(x_j) + g(x_j)$, where
  $T_0^{\afloor} f$ is a polynomial of degree at most $\afloor \leq r$,
 and $|g(x_j)| \leq c_\alpha \cf h^\alpha$ for all $j \in A$.
Now, for a polynomial $p$ of degree at most $r$, let $\bp = (p(x_j) :
j \in A)$ so that $\bp = \bX \ba$ for some $\ba \in \R^q$, and we have
\[
\be^T (\bX^T \bX)^{-1} \bX^T \bp = \be^T \ba = a_0 = p(0).
\]
With this reproducing formula and the fact that $T_0^{\afloor} f(0) = f(0)$, 
\[
\E \wh{f} - f(0) = \be^T (\bX^T \bX)^{-1} \bX^T \bg = \be^T (\bZ^T \bZ)^{-1} \bZ^T \bg.
\]
Because of \eqref{ZZ}, we have 
\beq \label{Z-res} 
|\be^T (\bZ^T\bZ)^{-1} \bZ^T \bg| \leq \frac{\|\be\|_2 \cdot \|\bZ^T
  \bg\|_2}{\lambda_{\rm min}(\bZ^T \bZ)} \leq C_1 (nh)^{-d} \|\bZ^T
\bg\|_2,  
\eeq 
where $C_1$ is the constant of \lemref{local_poly_var}.
But the entries $(\bZ^T \bg)_s = \sum_{j \in A} g(x_j)
z_j^s$ are uniformly bounded by $|A| \cdot c_\alpha \cf h^\alpha =
O((nh)^d h^\alpha)$, so that the RHS in \eqref{Z-res} is of order
$O(h^\alpha)$.  This imply that $(\E \wh{f} - f(0))^2 \le C_2
h^{2\alpha}$ for some constant $C_2$, and we conclude by redefining $C$ as 
$\max(C, C_1, C_2)$.
\end{proof} \medskip

\medskip
\begin{lem}[Bias: Lower Bound] \label{lem:local_poly_lb} Let $f = {\bf
    1}_\Omega$, where $\Omega = (0, 1/2) \times (0,1)^{d-1}$ and
  linear filtering (meaning $\wnlm_{i,j} = 1$ if, and only if, $\|x_i
  - x_j\| \le h/2$).  Then there is a constant $C > 0$ such that, when
  $\dist(x_i, \partial \Omega) \le h/C$, we have
  \beq \label{poly_bias_lb} (\E \wh{f}_i - f_i)^2 \ge 1/C.  \eeq
\end{lem}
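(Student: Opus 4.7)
The plan is to show that when $\dist(x_i, \partial\Omega) \le h/C$, the expected predictor $\E\wh{f}_i$ is bounded away from the true value $f_i \in \{0,1\}$ by a universal constant, yielding the desired squared bias bound. Assume without loss of generality that $x_{i,1} < 1/2$ (so $f_i = 1$) and write $\delta := 1/2 - x_{i,1} \in [0, h/C]$. Both the local-average and LPR estimators are linear in $\by$, so that $\E\wh{f}_i = \sum_{j \in A_i} w_j f(x_j)$ for weights $w_j$ depending only on the design and $x_i$; the reproducing property of LPR applied to constants gives $\sum_j w_j = 1$.

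The core calculation is that $\E\wh{f}_i = 1/2$ in the idealized symmetric situation, namely $\delta = 0$ together with a grid that is symmetric under reflection $T$ across $H := \{x_1 = 1/2\}$. Writing the LPR design matrix in scaled coordinates $z = (x - x_i)/h$ as in the proof of \lemref{local_poly_var}, the Gram matrix $\bZ^T \bZ$ has entries $\sum_j z_j^{s+t}$ which vanish whenever $s_1 + t_1$ is odd (by $T$-symmetry of the design), so $\bZ^T \bZ$ is block-diagonal in the parity of $s_1$; since $\be = (1, 0, \dots, 0)$ lies in the even block, so does $(\bZ^T \bZ)^{-1}\be$, which forces $w_{T(j)} = w_j$. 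Combined with the pointwise identity $f(x) + f(T x) = 1$ on the window off the measure-zero set $H$, relabeling yields
\[
2 \, \E\wh{f}_i = \sum_j w_j \bigl(f(x_j) + f(T x_j)\bigr) = \sum_j w_j = 1,
\]
i.e., $\E\wh{f}_i = 1/2$. To cover the general case $\delta \ge 0$ with arbitrary grid alignment, I pass to the continuous limit: by a Riemann-sum argument as in the proof of \lemref{local_poly_var}, $(nh)^{-d} \bZ^T \bZ$ converges to the continuous monomial Gram matrix $\bM$ on $[-1/2,1/2]^d$ (uniformly positive-definite), and $(nh)^{-d} \bZ^T \bbf$ similarly converges, with total error $O(1/(nh))$. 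The discrete prediction is therefore within $O(1/(nh))$ of the continuous LPR prediction of $g_t(z) := \mathbf{1}_{z_1 < t}$ at $z = 0$, with $t := \delta/h$; this continuous prediction equals $1/2$ at $t=0$ (by the symmetry argument carried out in the continuous setting) and is Lipschitz in $t$, so $|\E\wh{f}_i - 1/2| \le C_1/C + O(1/(nh))$.

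Choosing $C$ large enough (depending only on $d$ and $r$), and noting that $nh$ may be assumed sufficiently large (otherwise LPR reduces to $\wh{f}_i = y_i$, for which the bias analysis is trivial), gives $\E\wh{f}_i \le 3/4$ and hence $(\E\wh{f}_i - f_i)^2 \ge 1/16$. The main obstacle is controlling the discrete-to-continuous approximation uniformly in $t$ and in $x_i$ despite the lack of exact $T$-symmetry of the grid; this is handled by the uniform spectral bounds on $(nh)^{-d}\bZ^T\bZ$ already established in \lemref{local_poly_var}, together with the observation that $g_t$ has a discontinuity confined to a $(d-1)$-dimensional hyperplane, so the Riemann-sum error for $\bZ^T\bbf$ remains $O(1/(nh))$ uniformly.
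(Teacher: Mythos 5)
Your proposal is correct and follows essentially the same route as the paper's proof: both reduce the bias to a reproducing-formula expression, pass to the continuous monomial Gram matrix $\bM$ via Riemann sums (using the spectral bounds from \lemref{local_poly_var}), and exploit the reflection symmetry of the half-space to show the limiting prediction at the discontinuity is $1/2$, hence a squared bias of order $1$. The only differences are presentational: the paper argues by contradiction along a sequence $a_m \to 0$ where you give a direct Lipschitz-in-$t$ perturbation bound, and you spell out the parity/block-diagonal justification of the symmetry step that the paper leaves implicit.
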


\begin{proof}
  We continue with the notation introduced in the proof of
  \lemref{local_poly_ub}.  In particular, we translate everything
    so that $x_i = 0$. WLOG, assume that $x_i \in \Omega$ and let
  $\delta = \dist(x_i, \partial \Omega)$.  Let $A_\Omega = \{j: x_j
  \in B_h \cap \Omega\}$ and define $A_{\Omega^c}$ similarly.
  Using the reproducing formula, we get 
  \beq \label{lb1} \E \wh{f}
  -f(x_i) = \be^T (\bZ^T \bZ)^{-1} \bZ^T {\bf 1} _{A_\Omega} - \be^T
  (\bZ^T \bZ)^{-1} \bZ^T {\bf 1}  = - \be^T (\bZ^T \bZ)^{-1} \bZ^T {\bf
    1}_{A_{\Omega^c}}.  
    \eeq 
    Assume that $\delta < h$, in which case
 $A_{\Omega^c} = \{j: x_j \in (\delta, h) \times
  (-h,h)^{d-1}\}$, in which case the RHS in \eqref{lb1} is equal to
  $-G(\delta/h)$, where
\[
G(a) := \be^T (\bZ^T \bZ)^{-1} \bZ^T  {\bf 1} _{\{j : \, z_j \in (a, 1) \times (-1,1)^{d-1}\}}.
\]  
It suffices to show that there is $C > 0$ such that $G(a) \ge 1/C$
when $a \le 1/C$.  Assume this is not the case, in which case there is
$a_m \to 0$ such that $G(a_m) \to 0$.  As in the proof of
\lemref{local_poly_ub}, recognizing Riemann sums we see that, as $m
\to \infty$,
\[
\frac1{(nh)^d} \bZ^T \bZ \to \bM := \int_{(-1,1)^d} \bp(z) \bp(z)^T dz, \quad \bp(z) := (z^{s}: |s| \le r), 
\]
and
\[
\frac1{(nh)^d} \bZ^T {\bf 1} _{\{j : \, z_j \in (a, 1) \times (-1,1)^{d-1}\}}  \to \bv := \int_{(0,1) \times (-1,1)^{d-1}} \bp(z) dz.
\]
Let
\[
\bu = \int_{(-1,0) \times (-1,1)^{d-1}} \bp(z) dz.
\]
We have $\bu + \bv = \bM \cdot {\bf 1} $ (where ${\bf 1} = (1, \dots, 1)$), so that $\be^T \bM^{-1} (\bu + \bv)
= 1$, and therefore $\be^T \bM^{-1} \bv = 1/2$ by symmetry.  Hence,
$G(a_m) \to 1/2$, which is a contradiction.
\end{proof} \medskip 

This lemma states a lower bound on the squared bias of linear
filtering when $f$ is an indicator function of a half hypercube.  The
result is actually much more general. Any function $f$ in the
cartoon class has a foreground $\Omega$ whose boundary is
well-approximated by a hyperplane --- since $\partial \Omega$ is
Lipschitz --- and $f$
is approximately locally piecewise constant ($f$ is smooth on $\Omega$ and $\Omega^c$).  Hence, near the
discontinuity, $f$ resembles the function in \lemref{local_poly_lb}.

\subsubsection{Some probability}

The following result asserts that the maximum of $m$ identically
distributed random variables with exponentially decaying tails is at
most a power of $\log m$.

\begin{lem} \label{lem:decay}
Suppose $X_1, \dots, X_m$ are such that for some $a,b,c > 0$,
\[
\pr{|X_r| > t} \leq c \exp(-(t/a)^b), \quad \forall t > c, \quad \forall r = 1, \dots, m.
\]
Then for $m$ sufficiently large,
\[
\pr{\max(|X_1|, \dots, |X_m|) >  a (2 \log m)^{1/b}} \leq c/m.
\]
\end{lem}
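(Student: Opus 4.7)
The proof is a direct union bound followed by a threshold calibration, so I would make it self-contained in three short moves.

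First, I would observe that by the union bound applied to the tail hypothesis, for every $t > c$,
\[
\pr{\max_{1 \le r \le m} |X_r| > t} \le \sum_{r=1}^m \pr{|X_r| > t} \le m c \exp\bigl(-(t/a)^b\bigr).
\]
This is valid uniformly in $m$ and is the only probabilistic input we need; everything else is algebraic.

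Next, I would calibrate the threshold so that the right-hand side becomes $c/m$. Setting $t := a(2\log m)^{1/b}$, we get $(t/a)^b = 2\log m$, hence $\exp(-(t/a)^b) = m^{-2}$, and therefore
\[
m c \exp\bigl(-(t/a)^b\bigr) = m c \cdot m^{-2} = c/m,
\]
which gives exactly the bound claimed in the statement.

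Finally, I would check the side condition needed to apply the tail hypothesis, namely $t > c$. Since $a(2\log m)^{1/b} \to \infty$ as $m \to \infty$ (here $a, b > 0$ are fixed), this is satisfied for all $m$ larger than some explicit threshold depending only on $a, b, c$, which is what the qualifier ``for $m$ sufficiently large'' in the statement absorbs. There is no real obstacle: the only thing to be mildly careful about is the ``$\forall t > c$'' clause in the hypothesis, which is why the conclusion is stated asymptotically in $m$ rather than for all $m \ge 1$.
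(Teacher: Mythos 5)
Your proof is correct and is essentially identical to the paper's: a union bound over the $m$ variables followed by plugging in the threshold $t = a(2\log m)^{1/b}$ so that $m c\exp(-(t/a)^b) = c/m$. Your explicit check that $t > c$ holds for large $m$ is a detail the paper leaves implicit in the ``$m$ sufficiently large'' qualifier, but there is no substantive difference.
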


\begin{proof} Define $x_m =  a (2 \log m)^{1/b}$.  
By the union bound,
\begin{align*}
\pr{\max(|X_1|, \dots, |X_m|) > x_m} 
&\leq \pr{|X_1| > x_m} + \cdots \pr{|X_m| > x_m} \\
&\leq m c \exp(-(x_m/a)^b) \\ 
&= c m^{-1} \to 0. \\[-1.49cm]
\end{align*}
 \hspace{9.5cm}
\end{proof}
\vspace{0.5cm}

\begin{lem} \label{lem:normal-max}
For $X_i \sim \cN(0, \sigma_i^2)$ for $i =1, \dots, m$, and any $C > 0$, we have
\[
\pr{\max_{1\leq i\leq m} |X_i| > \max_{1\leq i\leq m} \sigma_i \sqrt{2 C \log m}} \leq m^{1-C}.
\]
\end{lem}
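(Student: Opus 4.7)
The plan is a standard union bound combined with a Gaussian tail estimate. Set $\sigma^* := \max_{1 \le i \le m} \sigma_i$ and $t_m := \sigma^* \sqrt{2 C \log m}$. First, I would apply the union bound:
\[
\pr{\max_{1 \le i \le m} |X_i| > t_m} \le \sum_{i=1}^m \pr{|X_i| > t_m}.
\]

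Next, for each summand I would reduce to a single standard Gaussian. Since $X_i/\sigma_i \sim \cN(0,1)$ and $\sigma_i \le \sigma^*$, writing $Z \sim \cN(0,1)$ we have
\[
\pr{|X_i| > t_m} = \pr{|Z| > t_m/\sigma_i} \le \pr{|Z| > t_m/\sigma^*} = \pr{|Z| > \sqrt{2C \log m}}.
\]

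The final step is the Gaussian tail bound. Using the Chernoff-type estimate $\pr{Z > s} \le e^{-s^2/2}$ (from the moment generating function $\E e^{\lambda Z} = e^{\lambda^2/2}$ optimized at $\lambda = s$), together with the Mills-ratio sharpening that yields $\pr{|Z| > s} \le e^{-s^2/2}$ for $s$ large enough (equivalently, for $s \ge \sqrt{2/\pi}$, using $\pr{|Z| > s} \le \tfrac{2}{s\sqrt{2\pi}} e^{-s^2/2}$), we obtain
\[
\pr{|Z| > \sqrt{2C \log m}} \le \exp(-C \log m) = m^{-C}.
\]
Combining with the union bound gives $\pr{\max_i |X_i| > t_m} \le m \cdot m^{-C} = m^{1-C}$, which is the claim.

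The only mild subtlety is that the crude Chernoff bound for $|Z|$ carries an extra factor of $2$, so I would either invoke the sharper Mills-ratio form (valid for $s$ bounded below by an absolute constant, which is automatic whenever $C \log m \ge 1/\pi$) or simply absorb the constant factor into the statement. There is no real obstacle beyond this bookkeeping.
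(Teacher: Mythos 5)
Your proposal is correct and follows essentially the same route as the paper: a union bound over the $m$ variables, reduction to a standard Gaussian via $\sigma_i \le \max_i \sigma_i$, and the tail estimate $\pr{|Z|>s}\le e^{-s^2/2}$ evaluated at $s=\sqrt{2C\log m}$. Your remark about the factor of $2$ is the only difference in presentation; the paper silently uses the two-sided form of the bound, which in fact holds for all $s\ge 0$.
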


\begin{proof} Fix $t \geq 1$ and let $\sigma = \max_{1\leq i\leq m}
  \sigma_i$.  By the union bound and the fact that $\pr{\cN(0,1) > t}
  \leq \exp(-t^2/2)$, we have
\[
\pr{\max_{1\leq i\leq m} |X_i| > t} \leq \sum_{i=1}^m \pr{|X_i| > t} \leq \sum_{i=1}^{m} \exp(-t^2/(2 \sigma_i^2)) \leq m \exp(-t^2/(2 \sigma^2)).
\]
We then plug in $t = \sigma \sqrt{2 C \log m}$.
\end{proof} \medskip

\begin{lem} \label{lem:chi-max}
Suppose $X_i \sim \chi_k^2$ for $i = 1, \dots, m$.  There is a constant $C_1$ such that, for any $C > C_1$, if $k \ge (64/9) C \log(m)$, we have
\begin{align}
\pr{\max_{1 \leq i\leq m} X_i > k + 2 \sqrt{C k \log m}} \leq m^{1-C/2} \label{eq:chi2_max}\\
\pr{\min_{1 \leq i\leq m} X_i < k - 2 \sqrt{C k \log m}} \leq m^{1-C/2} \label{eq:chi2_min}
\end{align}
\end{lem}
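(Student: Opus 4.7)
The plan is to combine the standard Laurent--Massart concentration inequality for a single $\chi_k^2$ random variable with a union bound, choosing the deviation parameter so that the threshold reduces to $2\sqrt{Ck\log m}$ under the stated hypothesis $k \ge (64/9) C\log m$.

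Recall the two-sided bound: for $X\sim \chi_k^2$ and any $t>0$,
\[
\P\!\left(X-k \ge 2\sqrt{kt}+2t\right)\le e^{-t},
\qquad
\P\!\left(X-k \le -2\sqrt{kt}\right)\le e^{-t}.
\]
For the upper tail \eqref{eq:chi2_max}, I would set $t=(C/2)\log m$, so that $e^{-t}=m^{-C/2}$, and check that the resulting deviation $\sqrt{2Ck\log m}+C\log m$ is dominated by the target $2\sqrt{Ck\log m}$. After squaring, this reduces to the elementary inequality $C\log m \le (6-4\sqrt 2)\,k$, i.e.\ $k\ge \tfrac{3+2\sqrt 2}{2}\,C\log m\approx 2.91\,C\log m$, which is comfortably implied by the hypothesis $k\ge (64/9) C\log m$. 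A union bound over $i=1,\dots,m$ then yields the advertised $m^{1-C/2}$.

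For the lower tail \eqref{eq:chi2_min}, the bound is simpler because the Laurent--Massart estimate is sharp: setting $t=C\log m$ I get $e^{-t}=m^{-C}$ and deviation exactly $2\sqrt{Ck\log m}$, so $\P(X_i < k-2\sqrt{Ck\log m})\le m^{-C}\le m^{-C/2}$, and a union bound over $i=1,\dots,m$ gives probability at most $m^{1-C}\le m^{1-C/2}$. Here the hypothesis on $k$ is not even needed; it only matters for the upper tail.

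There is no real obstacle; the only thing to be careful about is choosing $t$ so that the ``$2t$'' remainder in the upper-tail Laurent--Massart bound does not dominate the $2\sqrt{kt}$ term, which is exactly what the hypothesis $k\ge (64/9) C\log m$ (or any constant larger than $(3+2\sqrt 2)/2$) guarantees. The constant $C_1$ in the statement absorbs the implicit requirement that $t$ be in the regime of validity of Laurent--Massart (any $t>0$) and that $m$ be large enough for the tail estimate; both are automatic here.
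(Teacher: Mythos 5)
Your proof is correct, and it reaches the same two estimates by a mildly different route. The paper does not cite Laurent--Massart; it derives the single-variable tail bound from scratch via the Chernoff bound $\P(X_i>t)\le\exp\left(-(t-k)/2+(k/2)\log(t/k)\right)$, plugs in $t=k\pm2\sqrt{Ck\log m}$, and controls the logarithm with the cubic Taylor estimates $\log(1+x)\le x-x^2/2+x^3/3$ and $\log(1-x)\ge -x-x^2/2-x^3/3$; the hypothesis $k\ge(64/9)C\log m$ is exactly what is needed to absorb the cubic remainder $(4/3)C^{3/2}\sqrt{\log(m)/k}\,\log m$ into $(C/2)\log m$. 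Since Laurent--Massart is itself a Chernoff-type bound, the two arguments are cousins, but yours buys a couple of things: the lower tail \eqref{eq:chi2_min} comes out sharp with no condition on $k$ at all, and the upper tail \eqref{eq:chi2_max} only needs $k\ge\frac{3+2\sqrt2}{2}C\log m\approx 2.91\,C\log m$ rather than $(64/9)C\log m\approx 7.11\,C\log m$ --- your arithmetic here checks out ($2\sqrt{kt}+2t=\sqrt{2Ck\log m}+C\log m\le 2\sqrt{Ck\log m}$ reduces to $C\log m\le(6-4\sqrt2)k$). What the paper's self-contained derivation buys is independence from an external reference and an explicit explanation of where the constant $64/9$ comes from; it also sets up the same Taylor-expansion machinery reused for the noncentral case in the following lemma. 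Either version suffices for the downstream applications, which only use these bounds with a large generic constant $C$.
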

\begin{proof}
Let us prove the first inequality. Since the moment generating function of a $\chi_k^2$ is
$t\to(1-2t)^{-k/2}\1{(t<1/2)}$,
Chernoff's bound gives
\[
\pr{X_i > t} \le \exp\left( - (t-k)/2 + (k/2) \log (t/k) \right), \quad \forall t > k.
\]
We then use the inequality $\log(1 + x) \leq x - x^2/2 + x^3/3$, valid for $x \in (0,1)$ and input $t = k + 2 \sqrt{C k \log m}$, to get
\beqn
-(t-k)/2 + (k/2) \log (t/k) &\leq& - (t-k)^2/(4k) + (t-k)^3/(6 k^2) \\
&=& -C \log m + (4/3) C^{3/2} \sqrt{\log(m)/k} \log m,
\eeqn
and bound the second term by $(C/2) \log m$.  We then obtain
\[
\pr{X_i > t} \le m^{-C/2},
\]
and apply the union bound as before.
The second inequality is proved in the same way considering that 
$\log(1 - x) \geq -x - x^2/2 - x^3/3$ holds for $x \in (0,1)$. 
\end{proof}
 \medskip

\begin{lem} \label{lem:chi-non} Suppose $X_i \sim
  \chi_k^2(\delta_i^2)$ (non-central chi-square) for $i = 1, \dots,
  m$.  There is a constant $C_1$ such that, for any $C > C_1$, if $k
  \ge 16 C \log(m)$ and $\delta_{\rm min} := \min_i \delta_i \geq
  2 \sqrt{C \log m}$ , we have
\[
\pr{\min_{1\leq  i\leq m} X_i < \delta_{\rm min}^2/4 + k - 3 \sqrt{C k \log m}} \leq 2 m^{1-C/2}.
\]
Similarly, if $\delta_{\rm max} = \max_i \delta_i \le \sqrt{C \log m}$, we have
\[
\pr{\max_{1\leq i\leq m} X_i > k + 3 \sqrt{C k \log m}} \leq m^{1-C/2}.
\]
\end{lem}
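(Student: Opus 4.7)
\smallskip
\noindent\textbf{Proof proposal.}
The plan is to proceed through the standard representation of the non-central chi-square: we may write $X_i = (Z_{1,i}+\delta_i)^2 + V_i$, where $\{Z_{j,i}: 1\le j\le k,\ 1\le i\le m\}$ are i.i.d.\ $\cN(0,1)$, and $V_i := \sum_{j=2}^k Z_{j,i}^2 \sim \chi_{k-1}^2$. This isolates the only source of non-centrality into the single Gaussian $Z_{1,i}$, and lets us invoke \lemref{normal-max} on the $Z_{1,i}$'s and \lemref{chi-max} on the $V_i$'s to control both inequalities via a union bound.

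For the lower bound on $\min_i X_i$, applying \lemref{normal-max} with $C/2$ in place of $C$ (and $\sigma_i=1$) gives $\max_i |Z_{1,i}| \le \sqrt{C\log m}$ with probability at least $1-m^{1-C/2}$. Since $\delta_i \ge \delta_{\min} \ge 2\sqrt{C\log m}$, this forces $Z_{1,i}+\delta_i \ge \delta_i/2 > 0$, so $(Z_{1,i}+\delta_i)^2 \ge \delta_i^2/4 \ge \delta_{\min}^2/4$ for every $i$. On the other side, $V_i \sim \chi_{k-1}^2$, and the assumption $k \ge 16 C\log m$ implies $k-1 \ge (64/9) C\log m$ for $C_1$ large enough, so \lemref{chi-max} gives $\min_i V_i \ge (k-1) - 2\sqrt{C(k-1)\log m}$ with probability at least $1-m^{1-C/2}$. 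On the intersection (probability at least $1-2m^{1-C/2}$),
\[
\min_i X_i \ge \delta_{\min}^2/4 + (k-1) - 2\sqrt{C(k-1)\log m}.
\]
What remains is the arithmetic check that the right-hand side is at least $\delta_{\min}^2/4 + k - 3\sqrt{Ck\log m}$, which reduces to $3\sqrt{Ck\log m} - 2\sqrt{C(k-1)\log m} \ge 1$. Using $3\sqrt k - 2\sqrt{k-1} \ge \sqrt k$, it suffices that $\sqrt{Ck\log m} \ge 1$, which follows from $k\ge 16 C\log m$ once $C_1$ (hence $C\log m$) is large enough.

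For the upper bound on $\max_i X_i$, the same decomposition together with $\delta_i \le \delta_{\max} \le \sqrt{C\log m}$ and $|Z_{1,i}| \le \sqrt{C\log m}$ (the latter again by \lemref{normal-max} applied with $C/2$) yields $(Z_{1,i}+\delta_i)^2 \le (2\sqrt{C\log m})^2 = 4 C\log m$. Combined with the upper tail in \lemref{chi-max} giving $\max_i V_i \le (k-1) + 2\sqrt{C(k-1)\log m}$, we obtain on an event of probability at least $1-2m^{1-C/2}$:
\[
\max_i X_i \le k - 1 + 2\sqrt{Ck\log m} + 4 C\log m.
\]
The condition $k\ge 16 C\log m$ is exactly what is needed to guarantee $4 C\log m \le \sqrt{Ck\log m}$, giving $\max_i X_i \le k + 3\sqrt{Ck\log m}$. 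The factor of $2$ in the union-bound failure probability can be absorbed by enlarging $C_1$ (or it may be kept explicit, as in the statement of the minimum bound). The main bookkeeping obstacle --- really the only subtlety --- is matching the threshold constants from the two auxiliary lemmas so that the extra non-central contribution in the max case and the arithmetic slack in the min case are absorbed by the $3\sqrt{Ck\log m}$ margin; the constant $16$ in the hypothesis on $k$ is chosen precisely to make both inequalities go through.
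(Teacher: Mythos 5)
Your proposal is correct and follows essentially the same route as the paper: the decomposition $X_i = (Z_i+\delta_i)^2 + V_i$ with $V_i \sim \chi_{k-1}^2$, \lemref{normal-max} for the Gaussian parts, \lemref{chi-max} for the central chi-square parts, and a union bound. You are in fact somewhat more explicit than the paper about the $k$ versus $k-1$ bookkeeping and about why $k \ge 16C\log m$ absorbs the $4C\log m$ term in the max case, which the paper dismisses as ``similar and simpler.''
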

\begin{proof}
First, $X_i \equiv (Z_{i} + \delta_i)^2 + Y_i$, where $Z_{i} \sim \cN(0,1)$ and $Y_i \sim \chi_{k-1}^2$ are i.i.d. 
Hence, 
\[
\min_{1\leq  i\leq m} X_i \ge \min_{1\leq  i\leq m} (Z_i + \delta_i)^2 + \min_{1\leq  i\leq m} Y_i.
\]
Let $E_i = \displaystyle\{\max_{1\leq  i\leq m} |Z_i| \ge \sqrt{C \log m}\}$.  By \lemref{normal-max}, we have
\[
\pr{E_i} \le m^{1 - C/2}.
\]
Let 
\[
F_i = \displaystyle\{\min_{1\leq  i\leq m} Y_i \le  k-1 - 2 \sqrt{C (k-1) \log m}\} .
\] 
To control the $Y_i$'s, we apply inequality \eqref{eq:chi2_min} to get
\[
\pr{F_i} \le m^{1-C/2}.
\]
Under $E_i^c \cap F_i^c$, we have $\min_i X_i \ge \delta^2/4 + k - 3 \sqrt{C k \log m}$ and
\[
\pr{E_i^c \cap F_i^c} = 1 - \pr{E_i \cup F_i} \ge 1 - \pr{E_i} - \pr{F_i} \ge 1 - 2 m^{1-C/2}.
\]
This proves the bound on $\min_i X_i$; arguments for $\max_i X_i$ are similar and simpler.
\end{proof} \medskip

\subsection{Proofs of the main results}

\subsubsection{Proof of \thmref{linear}}
We start with the upper bound.  Fix $f \in \cF^{\rm cartoon}(\alpha,
\cf)$ with foreground $\Omega$.  Let $Q = \{i: \dist(x_i, \partial
\Omega) \leq h\}$.  For $i \in Q$, we use the fact that $|\wh{f}_i -
f_i| \le 1$, which implies $\E(\wh{f}_i - f_i)^2 \leq 1.$ For $i
\notin Q$, from \lemref{local_poly_var} and \lemref{local_poly_ub},
coupled with the bias-variance decomposition \eqref{bias-var}, we get
\[
\E(\wh{f}_i - f_i)^2 \leq C ( h^{2\alpha} + \sigma^2 (nh)^{-d} ).
\]
Using \lemref{count}, we have $|Q| \leq 4^d n^d |B(\partial \Omega,
h)|$, while and $|B(\partial \Omega, h)| = O(h)$ by \lemref{vol} and
the fact that $|\partial \Omega|$ is of order 1.  Summing over all $i
\in I_n^d$, we get
\[
\mse_f(\wh{f}) \leq \frac{n^d - |Q|}{n^d} \cdot C ( h^{2\alpha} + \sigma^2 (nh)^{-d} ) + \frac{|Q|}{n^d} \cdot C (1 + \sigma^2 (nh)^{-d} ) \leq C_1 (h + \sigma^2 (nh)^{-d}).
\]
Minimizing the RHS with respect to $h$ yields the upper bound in
\thmref{linear}.

For the lower bound, redefine $Q = \{i: \dist(x_i, \partial \Omega)
\leq h/C_1\}$, where $C_1$ is the constant of \lemref{local_poly_lb}.
For $i \notin Q$, we use \lemref{local_poly_var} and the
bias-variance decomposition \eqref{bias-var}, to get
\[
\E(\wh{f}_i - f_i)^2 \geq \frac1{C_2} \sigma^2 (nh)^{-d}.
\]
For $i \in Q$ we use \lemref{local_poly_lb} and the bias-variance
decomposition \eqref{bias-var}, to get
\[
\E(\wh{f}_i - f_i)^2 \geq \frac1{C_1}, 
\]
Using \lemref{count} again, we have the following lower bound on the
MSE (for $n$ large enough),
\[
\mse_f(\wh{f}) \geq \frac{n^d - |Q|}{n^d} \cdot \frac1{C_2} \sigma^2 (nh)^{-d} + \frac{|Q|}{n^d} \cdot \frac1{C_1} \geq C_3 (h + \sigma^2 (nh)^{-d}).
\]
Minimizing the RHS with respect to $h$ leads to the lower bound in \thmref{linear}.

\subsubsection{Proof of \thmref{MO}}
The proof for the upper bound is the same as that of \thmref{linear}
in smooth regions, leading to an upper bound on the MSE of the form
\[
\mse_f(\wh{f}) \leq C ( h^{2\alpha} + \sigma^2 (nh)^{-d} ).
\] 
Then minimizing this quantity over $h$ gives the stated result.  The
lower bound is a well-known minimax bound~\cite[Theorem 5.1.2,
p.\ 133]{Korostelev_Tsybakov93}. 
\medskip
\subsubsection{Proof of \thmref{BO}}
\label{sec:proof-BO}
Let $\delta(x) = \dist(x, \Omega)$.  The proof is similar to that of
\thmref{MO}, except that the variance varies by location.  The
point bias is of order $O(h^\alpha)$ everywhere, because the
smoothing window is of radius at most $h$, with all
points in the window being on the same side of the discontinuity.
However, the point variance is of order $O(\sigma^2 \lceil n
\delta(x_i) \rceil^{-d})$, since the window is of radius
$\delta(x_i)$ (immediate consequences of
\lemref{local_poly_var}).

Let us sum the point variances over all the pixels in the image.  The
situation is different according to the dimension.  We start with
$d=1$, so that $\Omega = (a,b) \subset (0,1)$.  For $\delta$ small
enough, there are exactly four points at distance {less than} $\delta$ from
$\partial \Omega$ (two on each side of the two jump locations).  Let's
consider the sample points $x_i \in [b,1)$, and let $j$ be such that
$x_{j-1} < b \leq x_j$.  Note that $j = bn (1 + o(1))$, and we assume
that $b$ is fixed.  For $i \in [j, j + nh]$, the variance is bounded
by $C \sigma^2/(i-j+1)$, while for $i \geq j + nh$ (in the smooth region),
the variance is of order $O(\sigma^2/(nh))$ as before.  Hence, summing
over $i \geq j$, the averaged variance in that region is bounded by
\[
\frac{C \sigma^2}{n-nh-j} \left(\sum_{i = j}^{j+[nh]} \frac1{i-j+1} +   \frac{n - nh-j}{nh}\right) = O(\frac{\sigma^2}{n})\left( \sum_{k=1}^{nh} \frac1k  +1\right) = O(\frac{\sigma^2 \log(n)}{n}).
\]
The same is true for all the other three regions.  

When $d \geq 2$, the story is just slightly different.  Define $Q_\ell
= \{i: \delta(x_i) \leq h 2^{-\ell}\}$ and let $\ell_0$ be such that
$h 2^{-\ell_0} < 2/n \leq h 2^{-\ell_0+1}$.  Stratifying, we have the
following bound on the averaged variance 
\beq 
  \frac{C \sigma^2}{n^d} \left(\sum_{\ell=0}^{\ell_0} \ \sum_{i \in
      Q_\ell \setminus Q_{\ell+1}} (nh 2^{-\ell-1})^{-d} + \sum_{i
      \notin Q_0} (nh)^{-d} \right)= \frac{C \sigma^2}{n^d}
  \sum_{\ell=0}^{\ell_0} \ |Q_\ell\setminus Q_{\ell+1}| (n h)^{-d}
  2^{d(\ell+1)} + \frac{C \sigma^2 }{(nh)^{d}}.  \nonumber
\eeq
By \lemref{count} and \lemref{vol}, we have $ |Q_\ell \setminus
  Q_{\ell+1}| \leq |Q_\ell| \leq C_1 n^d \cdot h 2^{-\ell}$, for some
constant $C_1$.  Hence, the first sum on the RHS of the last equation
is bounded by
\[
C_2 \sigma^2 h (n h)^{-d} \sum_{\ell=0}^{\ell_0} 2^{(d-1)\ell} \leq C_3 \sigma^2 h (n h)^{-d} \cdot 2^{(d-1) \ell_0} = O(\sigma^2/n).
\]

This leads to an upper bound on the MSE of the form
\beq \label{BO2}
\mse_f(\wh{f}) \leq C ( h^{2\alpha} + \sigma^2 A_n /n + \sigma^2 (nh)^{-d} ).
\eeq
Minimizing this quantity over $h$ gives the upper bound stated in \thmref{BO}.  

For the lower bound, we know from minimax results
underlying the lower bound in \thmref{MO} that there are functions $f$
in the cartoon class where the bias in the smooth regions is of order
at least $h^{\alpha}$.  As for the variance, our upper bound for the
averaged variance is easily seen to be lower bounded (up to a
multiplicative constant).  This leads to a lower bound identical to
\eqref{BO2} modulo a multiplicative constant, and optimizing it leads
to the lower bound in \thmref{BO}.  We omit details.
\medskip

\subsubsection{Proof \thmref{Y}}
\label{sec:proof-Y}
Fix $i \in I_n^d$ and let $\eta_i = \max_{j \in B(i,nh)} |\eps_j|$.
Then by the union bound and \eqref{F},
 \beq \label{eta-union}
\P(\eta_i \geq t) \leq |B(i,nh)| \max_{j \in B(i,nh)} \P(|\eps_j| \geq
t) \leq (2nh+1)^d \, (1-F(t/\sigma)) =: p.  \eeq Hence, with
probability at least $1-p$, the event $E_i := \{\eta_i \leq t\}$ holds
true.
WLOG, assume that $x_i \in \Omega$.  Since $\fin$ is $\cf$-Lipschitz,
 we have $|f(x_i) - f(x_j)| = |\fin(x_i) - \fin(x_j)|
\leq \cf h$ when $x_j \in \Omega \cap B(x_i, h)$, and by the triangle
inequality, $|y_i - y_j| \leq \cf h + |\eps_i - \eps_j| \leq \cf h + 2
t$ under $E_i$.
Suppose there is $x_j \in \Omega^c \cap B(x_i, h)$.  In that case,
there is $x \in \partial \Omega \cap B(x_i,h)$ and we have \beqn
|f(x_i) - f(x_j)| &\geq& |\fin(x) - \fout(x)| - |\fin(x_i) - \fin(x)| -|\fout(x_j) - \fout(x)| \\
& \geq& \mu(f) - 2\cf h \geq 1/\cf - 2 \cf h, \eeqn again by the
triangle inequality and the fact that $\fout$ is also $\cf$-Lipschitz, this implies that
\[
|y_i - y_j| \geq 1/\cf - 2 \cf h - |\eps_i - \eps_j| \geq 1/\cf - 2 \cf h - 2t,
\]
under $E_i$.  
We see that we need  $h_y \ge \cf h + 2 t$ to ensure that
sample points $x_j \in \Omega \cap B(x_i, h)$ are selected, while we
require that $h_y < 1/\cf - 2 \cf h - 2t$ so that points $x_j \in
\Omega^c \cap B(x_i, h)$ are disregarded.  These two inequalities
are, for example, satisfied when $h_y = 1/(3\cf)$ and $t = 1/(6\cf)$,
and $h$ sufficiently small --- by our assumptions, $h = o(1)$.
 Assume $h_y$ and $t$ are chosen that way.  Then, when $E_i$
holds, the photometric kernel in YF is able to
exactly mimic the MO.

We now turn to bounding the MSE.  First, we have
\[
\E (\wh{f}_i^{\rm YF} - f_i)^2 = \E[ (\wh{f}_i^{\rm YF} - f_i)^2 \one{E_i}] + \E[ (\wh{f}_i^{\rm YF} - f_i)^2 \one{E_i^c}].
\]
Since $\wh{f}_i^{\rm YF} = \wh{f}_i^{\rm MO}$ on $E_i$, 
\[
\E[ (\wh{f}_i^{\rm YF} - f_i)^2 \one{E_i}] = \E[ (\wh{f}_i^{\rm MO} - f_i)^2 \one{E_i}] \leq \E(\wh{f}_i^{\rm MO} - f_i)^2.
\]
And since $|\wh{f}_i^{\rm YF} - f_i| \leq 1$ because of our clipping, we have 
\[
\E[ (\wh{f}_i^{\rm YF} - f_i)^2 \one{E_i^c}] \le \P(E_i^c) \le p.
\]
It remains to check that $p$ is negligible compared to MO risk given
in \thmref{MO}.  Indeed, using the fact that $t \asymp 1$, that $h \leq 1$ and that $\sigma \leq (C' \log n)^{-1/b}$, we have
\[
p = O(nh)^{d} \exp[-(t/(C\sigma))^b] = \exp[ (d  - t^b (C'/C^b)) \log n] = o(\sigma^2/n^d)^{2\alpha/(d+2\alpha)},
\]
when $C'$ is sufficiently large, implicitly assuming that $\sigma$ is
at least a polynomial in $n$, for otherwise the trivial estimator
$\wh{y} = y$ is optimal. This concludes the proof.

\medskip

\noindent {\bf When the noise level is not small.}
Assume $\sigma$ is fixed, for simplicity.  Note YF is identical to LF
when $h_y \to \infty$ sufficiently fast.  Assume therefore that $h_y
\le h_0$ for some fixed $h_0 < \infty$.  We now argue that YF is
essentially useless when this is the case.  Concretely, assume the
reverse of \eqref{F}, meaning 
\beq\label{F-lb} 
\P(|\eps_i| \le t) \le F(t/\sigma), \ \forall t, \ \forall i \in I_n^d.  
\eeq
We show that, when
$F(2h_0/\sigma) < 1$, YF has an overall squared bias (and therefore
MSE) of order 1, which is comparable to the trivial estimator $\wh{f}
= y$. In other words, for large noise and
relatively small $h_y$, YF can perform {\em worse} than LF. For
example, the bias is at least $h_0$ at locations $i$ satisfying
$|\eps_i| \geq h_0 + h_y$.  Indeed, we are averaging over values $y_j$
such that $|y_j - y_i| \leq h_y$, so that $|\wh{f}_i - y_i| \leq h_y$
and therefore $|\wh{f}_i - f_i| \geq |\eps_i| - |\wh{f}_i - y_i| \geq
(h_0 + h_y) - h_y = h_0$.  Moreover, by \eqref{F-lb}
\[
\pr{|\eps_i| \geq h_0 + h_y} \geq 1 - F(2h_0/\sigma) > 0.
\]
Integrating the squared bias over these sample points alone
leads to a lower bound of order 1.

\medskip
\subsubsection{Proof of \thmref{NLM}}
\label{sec:proof-NLM}

For simplicity, we ignore boundary issues and in particular assume
that all patches are of same size, with $m_{\patch} \asymp (nh_{\patch})^d$
sample points each, and similarly for spatial windows, with $m_h
\asymp (nh)^d$ sample points each.

{\bf Upper bound for NLM-average.}
For $i \in I_n^d$ such that $\patch_i \cap \partial \Omega \neq \emptyset$, we use the fact that $|\wh{f}_i - f_i| \le 1$ to get
\[
\E (\wh{f}_i - f_i)^2 \leq 1.
\]
Consider $i$ with $\patch_i \cap \partial \Omega = \emptyset$.
WLOG,  assume  $\patch_i \subset
\Omega$.
Take any $j \in B(i, nh)$.  By definition,
\beq \label{patch-decomp}
\ypbarj - \ypbari = \fpbarj  - \fpbari  + \epbarj  - \epbari .
\eeq
For the noise part we have 
\beq \label{eps-diff}
\epbarj - \epbari \sim \cN(0,
\sigma^2 \Delta_{ij}/m_{\patch}^2), 
\eeq 
where $\Delta_{ij}$ is the number of sample points in the symmetric difference of $\patch_i$ and $\patch_j$.  By \lemref{normal-max}, we have
that 
\beq \label{Qeps_ub} \max_{j \in B(x_i, h)}
|\epbarj - \epbari | \leq \zeta
:= 2 \sigma  \sqrt{ C \log(m_h)/m_{\patch}}, 
\eeq 
with probability at
least $1 - m_h^{1 - C}$.  In the sequel, we fix $C$ large and denote
by $E_i$ the event \eqref{Qeps_ub}.
For the signal part, we have the following \beqn \fpbari - \fpbarj
&=& \frac1{m_{\patch}} \sum_{x_k \in \patch_i} f(x_k) - \frac1{m_{\patch}} \sum_{x_k \in \patch_j} f(x_k) \\
&=& \frac1{m_{\patch}} \sum_{x_k \in \patch_0} (f(x_k + x_i) -
f(x_k+x_j)), \eeqn where $\patch_0$ is a generic patch centered at 0.
If $x_j \in \Omega$ with $\patch_j \subset \Omega$, then, since $\fin$
is $\cf$-Lipschitz,
\begin{eqnarray}
|\fpbari  - \fpbarj | 
& \leq& \frac1{m_{\patch}} \sum_{x_k \in \patch_0} |\fin(x_k + x_i) - \fin(x_k + x_j)| \nonumber \\
& \leq& \cf |x_i - x_j| \leq \cf h.  \label{Qf_ub}
\end{eqnarray}
If $x_j \in \Omega^c$, then there is a point $x \in B(x_i, h)
\cap \partial \Omega$, and we have $f(x_k) =\fin(x)+ [\fin(x_k)
-\fin(x)] $ for $x_k \in \Omega$ and $f(x_k) = \fout(x)+[\fout(x_k)
-\fout(x)]$ for $x_k \in \Omega^c$, with $|\fin(x_k) -\fin(x)| \le \cf
h$, $|\fout(x_k) -\fout(x)| \le \cf h$ and $|\fin(x) - \fout(x)| \geq
1/\cf$.  Hence, \beqn \fpbari - \fpbarj
&=& \frac1{m_{\patch}} \sum_{x_k \in \patch_i} f(x_k) - \frac1{m_{\patch}} \sum_{x_k \in \patch_j} f(x_k) \\
&=& \fin(x) + \frac1{m_{\patch}} \sum_{x_k \in \patch_i} [\fin(x_k) -\fin(x)] \\
&& - \fin(x) \frac{|\patch_j \cap \Omega|}{|\patch_j|} -  \frac1{m_{\patch}} \sum_{x_k \in \patch_j \cap \Omega} [\fin(x_k) -\fin(x)] \\
&& - \fout(x) \frac{|\patch_j \cap \Omega^c|}{|\patch_j|} -  \frac1{m_{\patch}} \sum_{x_k \in \patch_j \cap \Omega^c} [\fout(x_k) -\fout(x)]\\
&=& (\fin(x) - \fout(x)) \frac{|\patch_j \cap \Omega^c|}{|\patch_j|} +
R, \eeqn where $|R| \le 2 \cf h$.  We now use \lemref{rho} to bound
the fraction above from below by $(2\cf)^{-d}$, to get
\beq \label{Qf_lb} |\fpbari - \fpbarj | \ge (2\cf)^{-d} \mu - 2
\cf h.  \eeq 

Using the decomposition \eqref{patch-decomp}, coupled with the
triangle inequality and \eqref{Qeps_ub}, \eqref{Qf_ub} and
\eqref{Qf_lb}, we see that we need to choose $h_y$ such that
\beq \label{hy1} 
\cf h + \zeta \leq h_{y} < (2\cf)^{-d} \mu - 2 \cf h
- \zeta.  
\eeq 
The lower bound is to ensure that all the points $x_j
\in B(x_i, h)$ such that $\patch_j \subset \Omega$ are included in the
neighborhood of $x_i$ (\ie $\wnlm_{ij} = 1$), while the upper
bound is to ensure that no points in $\Omega^c$ are included (under $E_i$). 
For points $x_j \in B(x_i, h)$ such that $\patch_j
\cap \Omega^c \neq \emptyset$, they may or may not be included,
depending on how large that intersection is.  Note that \eqref{hy1} is
satisfied when $h_y$ is a sufficiently small constant since we have $h
\to 0$, $\zeta \to 0$ and $\mu \asymp 1$ under our assumptions.  In
any case, we assume that \eqref{hy1} holds.

In terms of MSE, we proceed as follows.  Let $B_i = \{j: x_j \in B(x_i,
h)\}$, $B_i^0 = \{j: x_j \in B(x_i, h), \,\patch_j \subset \Omega\}$
and $A_i =\{j: \wnlm_{i,j} = 1\}$ --- the latter is a random subset of
$B_i$.  We saw that $A_i \supset B_i^0$ under $E_i$, which implies
\[
E_i \subset \{A_i \supset B_i^0\} \subset \bigcup_{B_i^0 \subset A \subset B_i} \{A_i = A\},
\]
leading to 
\beq \label{Ei}
\one{E_i} \le \sum_{B_i^0 \subset A \subset B_i} \one{\{A_i = A\}}.
\eeq
Using \eqref{Ei} and the fact that $|\wh{f}_i - f_i| \leq 1$, we have 
\beqn
\E (\wh{f}_i - f_i)^2 
&=& \E[ (\wh{f}_i - f_i)^2 \one{E_i}] + \E[ (\wh{f}_i - f_i)^2 \one{E_i^c}] \\
&\le& \sum_{B_i^0 \subset A \subset B_i} \P(A_i = A) \, \E[ (\wh{f}_{A} - f_i)^2] + \P(E_i^c),
\eeqn
where $\wh{f}_{A}$ is the local polynomial estimator based on $A\subset I_n^d$.    
For the second term, $\P(E_i^c) \le m_h^{1-C}$ by \eqref{Qeps_ub}.
For the first term, by \lemref{rho}, we know that $B(x_i, h) \cap
\Omega$ contains a ball of radius $C_1 h$ with $C_1 > 0$ depending
only on $\cf$ and $d$.  Hence, by the triangle inequality, $B(x_i, h)
\setminus B(\Omega^c, h_{\patch})$ contains a ball of radius $C_1 h - h_{\patch} \ge
C_1h/2$ (eventually), implying that $B_i^0$ contains a discrete ball
of radius at least $(C_1h/3) n \asymp nh$.  Therefore $|B_i^0|/|B_i|
\asymp 1$ and we may apply \lemref{local_poly_var} and
\lemref{local_poly_ub} to each $A$ in the sum above, to get
\[
\E (\wh{f}_A - f_i)^2 \le C_2 (h^{2 \alpha} + \sigma^2 (nh)^{-d}),
\]
for a constant $C_2$.  Hence, using the fact that $\sum_{B_i^0 \subset
  A \subset B_i} \P(A_i = A) \le 1$, we have
\[
\E (\wh{f}_i - f_i)^2 \le C_2 (h^{2 \alpha} + \sigma^2 (nh)^{-d}) + m_h^{1-C}.
\]
By our choice for $h$, $h^{2 \alpha} + \sigma^2 (nh)^{-d} \asymp
(\sigma^2/n^d)^{2\alpha/(d+2\alpha)}$, and we may choose $C$ large
enough so the last term on the RHS is negligible,
leading to an MSE at $i$ of order
$O(\sigma^2/n^d)^{2\alpha/(d+2\alpha)}$.

The MSE is of the same order when $x_i \in \Omega^c$, and
summing over all $i \in I_n^d$, we get
\[
\mse_f(\wh{f}) \leq \frac{|Q|}{n^d} + O(\sigma^2/n^d)^{2\alpha/(d+2\alpha)},
\]
where $Q := \{i: \patch_i \cap \partial \Omega \neq \emptyset\}$.
Since $Q \subset \{i: \dist(x_i, \partial \Omega) < h_{\patch}\}$, by
\lemref{count} and \lemref{vol}, we have $|Q| \leq C_2 n^d \cdot h_{\patch}$,
so that
\[
\mse_f(\wh{f}) \leq O(h_{\patch} + (\sigma^2/n^d)^{2\alpha/(d+2\alpha)}),
\]
Optimizing over $h_{\patch}$ subject to \eqref{hy1} being
  satisfied, we achieve the desired result.

\medskip
{\bf Upper bound for NLM.}  We follow the same arguments.  Here we
focus on $i \in I_n^d$ such that $\dist(x_i, \partial \Omega) > 2h_{\patch}$
(instead of $h_{\patch}$), and assume WLOG that $x_i \in \Omega$.  Take $j \in
B(i, nh)$ such that $\patch_j \cap \patch_i = \emptyset$.  Note that
this is true when $x_j \in \Omega^c$.  By definition,
\[
\bypj - \bypi = \bbfpj - \bbfpi + \bepj - \bepi.
\]
Since $\bepj - \bepi \sim \cN(0, 2 \sigma^2 \bI_\mp)$, we have
$\|\bypj - \bypi\|_2^2 \sim 2 \sigma^2 \chi_\mp^2(\|\bbfpj -
\bbfpi\|_2^2/(2 \sigma^2))$, with
\[
\|\bbfpj - \bbfpi\|_2^2 = \sum_{x_k \in \patch_0} (f(x_k + x_j) - f(x_k + x_i))^2.
\] 
If $x_j \in \Omega$ with $\patch_j \subset \Omega$, then 
\begin{eqnarray}
\|\bbfpj - \bbfpi\|_2^2 
&=& \sum_{x_k \in \patch_0} (\fin(x_k + x_i) - \fin(x_k + x_j))^2 \\
&\le& \mp \cf^2 \|x_i - x_j\|_2^2 \leq \mp \cf^2 h^2,  \label{f_ub}
\end{eqnarray}
since $\fin$ is $\cf$-Lipschitz.  By \lemref{chi-non}
and the fact that $ \mp \cf^2 h^2/\sigma^2 = o(1)$, we conclude that
\beq \label{Ei2} \max_j \|\bypj - \bypi\|_2^2 \le 2 \sigma^2 \mp +
\zeta_\chi, \quad \zeta_\chi := 6 \sigma^2 \sqrt{C \mp \log m_h}, \eeq
with probability at least $1 - m^{1-C/2}$, where the maximum is over
$j$ such that $x_j \in B(x_i, h)$ and $\patch_j \subset \Omega
\setminus \patch_i$.  
Let $E_i$ be this event.

If $x_j \in \Omega^c$, then there is a point $x \in B(x_i, h)
\cap \partial \Omega$.  Let $Q_j = \{x_k \in \patch_0: x_k + x_j \in
\Omega^c\}$.  For $x_k \in Q_j$, we use the decomposition 
\beqn
f(x_k + x_j) - f(x_k + x_i) &=& \fout(x_k + x_j) - \fout(x) \\
&& + \fout(x) - \fin(x) + \fin(x) - \fin(x_k+x_i),
\eeqn with the first and third differences bounded by $\cf h$ in
absolute value, and the second bounded from below by $\mu$ in absolute
value.  We therefore have
\begin{eqnarray}
\delta_{ij}^2 := \|\bbfpj - \bbfpi\|_2^2 
&\geq& \sum_{x_k \in Q_j} (\fin(x_k + x_i) - \fout(x_k + x_j))^2 \\
&\geq& |Q_j| (\mu - 2 \cf h)^2\ge \delta^2 := \mp (2\cf)^{-d} \mu^2/2, \label{f_lb}
\end{eqnarray}
where we used \lemref{rho} to bound $|Q_j|$ from
below and the fact that $\mu \asymp 1$ while $h = o(1)$.  Since
$\|\bypj - \bypi\|_2^2 \sim 2 \sigma^2
\chi_\mp^2(\delta_{ij}^2/(2\sigma^2))$ and $\delta_{ij} \ge \delta$,
with \lemref{chi-non} we see that \beq \label{Fi} \min_j \|\bypj -
\bypi\|_2^2 \ge \delta^2/4 + 2 \sigma^2 \mp - \zeta_\chi, \eeq with
probability at least $1 - m^{1-C/2}$, where the minimum is over $j$
such that $x_j \in \Omega^c \cap B(x_i, h)$.  Let $F_i$ denote this
event.

Assuming \eqref{Ei2} and \eqref{Fi} hold, we see that we need to
choose $h_y$ such that 
\beq \label{hy} 
2 \sigma^2 \mp + \zeta_\chi
\leq h_{y}^2 < \mp (2\cf)^{-d} \mu^2/8 + 2 \sigma^2 \mp -
\zeta_\chi.  
\eeq 
The lower bound is to ensure that all the points
$x_j \in B(x_i, h)$ such that $\patch_j \subset \Omega$ and $\patch_j
\cap \patch_i = \emptyset$ are included in the neighborhood of $x_i$
(meaning $\wnlm_{ij} = 1$), while the upper bound is to ensure that no
points $x_j \in \Omega^c$ are included (under $E_i$).  For
all other points $x_j \in \Omega \cap B(x_i, h)$, they may or may not
be included, depending on how large that intersection is.  Note that there is an $h_y$ satisfying \eqref{hy} if, and only if,
\[
\mp (2\cf)^{-d} \mu^2/8 > 2 \zeta_\chi \Leftrightarrow \mp > C_1 \sigma^4 \log n,
\]
for a constant $C_1$ which depends only on $d, \cf, \mu$. 
Assuming $\mp$ is that large, \eqref{hy} is satisfied when $h_y^2 = 2 (1+\eta)
\sigma^2 \mp$  with $\eta$ sufficiently small.  In any case, we assume
that \eqref{hy} holds and the rest of the proof is identical to the one for NLM-average.

{\bf Lower bound (heuristics).}  We discuss here the
lower bound and where the issues are.  Consider the important case where $\sigma$
is fixed and assume that $f = \one{\Omega}$,  where $\Omega = (0,1/2)
\times (0,1)^{d-1}$.
Consider direct neighbors (\ie points with distance $1/n^d$) $x_i \in \Omega$ and $x_j \in \Omega^c$.
For $x_k$ such that $\patch_k \cap (\patch_i \cup \patch_j) =
\emptyset$, we use \eqref{eps-diff} to arrive at
\[
\ypbark - \ypbari \sim \cN(\lambda_k, 2 \sigma^2/\mp),
\]
and
\[
\ypbark - \ypbarj \sim \cN(\lambda'_k, 2 \sigma^2/\mp),
\]
where $|\lambda_k - \lambda'_k| = |\patch_i \setminus \patch_j| =
\mp^{-1/d}$.   When $d \le 2$, the difference in means $\mp^{-1/d}$ is
of order at most that of the standard deviation $\mp^{-1/2}$, so that
these two distributions cannot be effectively separated.  {\em
  Heuristically}, this indicates that if the photometric kernel of
NLM-average includes $x_k$ in the neighborhood of $x_i$, it also includes it
in the neighborhood of $x_j$ with non-negligible probability.  This is
evidence that the squared bias is of order 1 at these points.  Since
there are order $(nh)^{d-1}$ such sample points, averaging over
them yields a lower bound on the squared bias (and therefore on the
MSE) of order $O(1/n)$.  The same heuristics could be applied to NLM.

The story changes for $d \ge 3$.  In fact, for any $f$ in the cartoon
model with similar foreground, NLM-average --- and NLM --- achieve a
much better risk.  To see this, fix $x_i \in \Omega$.  We already know
that NLM-average behaves well when $\patch_i \subset \Omega$; therefore
assume that $\patch_i \cap \Omega \neq \emptyset$.  If $x_j - x_i$  
is not parallel to $\partial \Omega$, then $|\fpbarj-\fpbari| \ge \mp^{-1/d}$,  so that under \eqref{eps-diff},
\[
|\ypbark - \ypbarj| \ge \mp^{-1/d} - \zeta \ge \mp^{-1/3} - \zeta.
\]
Noting that $\zeta \asymp \sqrt{\log(n)/\mp} = o(\mp^{-1/d})$,
if we choose $h_y \asymp \mp^{-2/5}$, then with high probability, the
neighborhood of $x_i$ only includes those $x_j \in B(x_i, h)$ such
that $x_j - x_i$ is parallel to $\partial \Omega$, perhaps excluding
those such that $\patch_j \cap \patch_i = \emptyset$.  There are order
$(n h)^{d-1}$ such $x_j$'s, which drives the variance of the local
polynomial estimator at $x_i$.  This applies to all $x_i$ with
$\patch_i \cap \Omega \neq \emptyset$, and there are order $n^d h$
such $x_i$'s.  The MSE over these points yields an MSE of order
\[
\frac1{n^d} (n^d h) \left(h^{2\alpha} + \frac{\sigma^2}{(n h)^{d-1}}\right) = h^{2\alpha+1} + (n h^2) \frac{\sigma^2}{(n h)^{d}}. 
\]     
We know that the MSE over the points away from the discontinuity is of
order $ h^{2\alpha} + \frac{\sigma^2}{(n h)^{d}}$,
so the overall MSE is of order $h^{2\alpha} + (n h^2 \vee 1) \frac{\sigma^2}{(n h)^{d}}$.
Minimizing over $h$ yields a lower bound of $(\sigma^2/n^{d-1})^{2\alpha/(2 \alpha + d-2)} \vee (\sigma^2/n^{d})^{2\alpha/(2 \alpha + d)}$, which is the MO rate if $d \ge 2 \alpha$.  

{\bf Non-local versions.}  We quickly argue that, without
spatial localization, YF, NLM, and NLM-average do not perform that
well (relative to the MO), unless the
underlying function is a polynomial (of degree at most $r$, where $r$ is the  chosen degree
for the polynomial fitting) or all jumps are greater than $h_y$.  
Let us look at what the methods do on noiseless data.  For a given photometric bandwidth
$h_y$, consider the function $f = h_y \one{\Omega}$, where $\Omega =
(0,1/2) \times (0,1)^{d-1}$. 
Then both YF and NLM-average output a
constant estimator equal everywhere to the local polynomial estimator
applied to the whole image.  Hence, the MSE is at least $h_y^2/4$.
Given that we take $h_y$ relatively large, this leads to a large MSE
(of order~1).

\medskip
\subsubsection{Proof of \thmref{thin}}
\label{sec:proof-thin}

The only difference with the
cartoon model is in the behavior of local polynomial regression.  Fix a
point $x_i \in \Omega$.  By \lemref{vol} (scaled by $h$), $\Vol(B(x_i,
h) \cap \Omega) \asymp h^{d_0} a^{d-d_0}$.  (In fact, this is slightly
easier here since $\Omega$ is a band around the graph of a function.)
Therefore, by \lemref{count} (scaled by $h$), we see that, $
\# \{j: x_j \in B(x_i, h) \cap \Omega\} \asymp n^d h^{d_0} a^{d-d_0}$.
This is the number of observations we are ``averaging'' over.

For LF, we prove a lower bound of order 1 for the squared bias at
$x_i$; we proceed as in \lemref{local_poly_lb} with only cosmetic
adjustments.

For BO, we apply LPR to the sample points $x_j$ belonging to the largest ball centered at $x_i$ which is contained in $\Omega$.  Since we only consider $x_i \in \Omega \setminus B(\partial \Omega,a/C)$, 
then this ball is of radius at least $a/C$.  We then conclude using the same argument bounding the risk of BO in the cartoon model detailed in
\secref{proof-BO}.

For MO, and its mimickers YF and NLM, we need to refine
\lemref{local_poly_var} because, in the case where $\Omega$ is a thin
band, the largest ball within it is not representative of the sample
size used in the local polynomial fit --- which is what drives the
variance.  We explain how to adapt the proof of
\lemref{local_poly_var} to show that, for a constant $C$,  
\[
\Var(\wh{f}_i) \leq \frac{C \sigma^2}{n^d h^{d_0} a^{d-d_0}},
\]
 Let $u_1, \dots, u_m$ be a maximal $a$-packing of $B(x_i, h) \cap \Omega$, with $m \asymp (h/a)^{d_0}$.  Then 
\[
\bigsqcup_{k=1}^m B(u_k, a) \subset \Omega. 
\]
Using the notation introduced in the proof of \lemref{local_poly_var}, we have 
\[
\bZ^T \bZ \succ \sum_{k=1}^m \bZ_k^T \bZ_k, 
\]
where $\bZ_k = (z_j^s: x_j \in B(u_k, a), |s| \le r)$. 
We can then use \eqref{ZZ} to obtain
\[
\lambda_{\rm min}(\bZ_k^T \bZ_k) \ge \frac1C (n a)^d,
\] 
implying
\[
\lambda_{\rm min}(\bZ^T \bZ) \succ \frac1C m (n a)^d.
\]
This gives the upper bound on the variance, and the
bias behaves as expected, meaning that \lemref{local_poly_ub} holds.
It is now straightforward to deduce that MO at $i$ has a
squared bias of order $O(h^{2\alpha})$ and a variance of $O(\sigma^2 n^{-d}
h^{-d_0} a^{-d+d_0})$.  Given that $h = h^{\rm MO}$ and $a = o(h^{\rm
  MO})$, the variance dominates and may be expressed as $(h/a)^{d-d_0}
O(\sigma^2/(nh)^{d})$, with $O(\sigma^2/(nh)^{d})$ being the order of magnitude of the
point risk of MO under the cartoon model.

YF is still able to perfectly mimic MO under the
conditions of \thmref{Y} (same exact arguments).

For points $x_i \in \Omega$ with $\dist(x_i, \partial \Omega) > h_{\patch}^{\rm NLM}$,
the analysis for NLM is again exactly the same, the difference here
being in the number of $j$'s such that $\patch_j \subset \Omega$, which
is of order $n^{d} h^{d_0} a^{d-d_0}$.  The rest is the same.

\medskip
\subsubsection{Proof of \prpref{pattern}}
\label{sec:proof-pattern}

Here $\Omega$ and $\Omega^c$ are interchangeable, so we focus on the
former WLOG and fix $x_i \in \Omega$.  Again, the only
difference with the cartoon model is in the behavior of local linear
regression and we need a stronger version of \lemref{local_poly_var}
 when $\Omega$ is a repeated pattern.  Using
notations introduced in the proof of \lemref{local_poly_var}, we have
\[
\bZ^T \bZ = \sum_v \bZ_v^T \bZ_v,
\]
where $\bZ_v = (z_j^s: x_j \in B(x_i, h) \cap (\Xi + v), |s| \le r)$.
Note that we may restrict the sum to those $v \in a\bbZ^d$ such that
$B(x_i, h) \cap (\Xi + v) \neq \emptyset$, and there are order
$(h/a)^d$ such $v$'s.  Since they are all translates of each other,
let us focus on $\Xi$, that is, $v = 0$.

We again express $\bZ_0^T \bZ_0$ as a sum of matrices by partitioning the $d$-dimensional subgrid $\{x_j \in \Xi\}$ into discrete 1D grids of the form 
\[
L_{j_1, \dots, j_{d-1}} := \{((j_1-1/2)/n, \dots, (j_{d-1}-1/2)/n, (j_d - 1/2)/n) \in \Xi: j_d = 1, \dots, [na]\},  
\]
where $j_1, \dots, j_{d-1} \in \{1, \dots, [na]\}$.  We therefore have
\[
\bZ_0^T \bZ_0 = \sum_{j'} \bZ_{(j')}^T \bZ_{(j')},
\]
where $\bZ_{(j')} := (z_k^s: x_k \in L_{j'} \cap \Omega, |s| \le r)$ for $j' \in \{1, \dots, [na]\}^{d-1}$.

Since $N_\Omega \ge (1/C) N_{\Omega^c}$, we also have that $N_{\Xi} \ge (1/C) N_{(0,a)^d \setminus \Xi}$, so that $\Xi$ contains at least the fraction $1/(C+1)$ of the sample points in $(0,a)^d$ and therefore 
\beq \label{L}
\sum_{j' \in \{1, \dots, [na]\}^{d-1}} |L_{j'}| \ge \frac{[na]^d}{C+1}.
\eeq
Let
\[
J' :=  \{j' \in \{1, \dots, [na]\}^{d-1} : |L_{j'}| \ge [na]/(2C+2)\}.
\]
Since $|L_{j'}| \le [na]$, we have
\[
\sum_{j' \in \{1, \dots, [na]\}^{d-1}} |L_{j'}| \le [na] |J'| + \frac{[na]}{2C+2} ([na]^{d-1} - |J'|),
\]
so that $|J'| \ge [na]^{d-1}/(2C+1)$ by \eqref{L}.
We focus on $\bZ_{(j')}$ with $j' \in J'$.  Notice that this reduces
the analysis to the one-dimensional case.  

\begin{lem} \label{lem:vandermonde}
There is a numeric constant $C > 0$ such that any polynomial regression matrix of the form $\bU = ((k/m)^s: 0 \le s \le r; k \in K)$, with $K \subset \{-m, \dots, m\}$ and $|K| \ge r+1$, satisfies $\lambda_{\rm min}(\bU^T\bU) \ge |K| (|K|/m)^{2r}/C$.     
\end{lem}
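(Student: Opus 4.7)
The minimum eigenvalue equals
\[
\lambda_{\min}(\bU^T\bU) \;=\; \inf_{\|\ba\|_2 = 1} \sum_{k \in K} p(k/m)^2, \qquad p(x) := \sum_{s=0}^r a_s x^s,
\]
so the task is to prove this is bounded below by $N(N/m)^{2r}/C$, writing $N := |K|$. The plan is an interleaving argument that decomposes the quadratic form into $\asymp N$ contributions, each from an $(r+1)$-tuple of well-spread nodes, and combines per-group Vandermonde lower bounds additively.

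Sort $K = \{k_1 < k_2 < \cdots < k_N\}$ and set $s := \lfloor N/(r+1) \rfloor$. Form the $s$ pairwise disjoint groups
\[
G_i := \{k_i,\, k_{i+s},\, k_{i+2s},\, \ldots,\, k_{i+rs}\}, \qquad i = 1,\ldots,s.
\]
Within each $G_i$, consecutive members differ by at least $s$ in the integer scale (distinct integers in the sorted order are at least one apart), hence by at least $s/m \ge N/(2(r+1)m)$ in the $x$-scale. Writing $\bU^T \bU = \sum_{k\in K} \bu_k\bu_k^T$ with $\bu_k := (1, k/m,\ldots,(k/m)^r)^T$, and letting $V_i$ be the $(r+1)\times(r+1)$ Vandermonde matrix whose rows are $\bu_k^T$ for $k \in G_i$,
\[
\bU^T \bU \;\succeq\; \sum_{i=1}^{s} V_i^T V_i, \qquad\text{hence}\qquad \lambda_{\min}(\bU^T\bU) \;\ge\; \sum_{i=1}^{s} \lambda_{\min}(V_i^T V_i).
\]

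It remains to establish the uniform per-group bound $\lambda_{\min}(V_i^T V_i) \ge c_r(N/m)^{2r}$. Let $\bar k_i$ and $c_i$ be the midpoint and half-spread of $G_i/m$; the affine change of variable $u = (x - \bar k_i)/c_i$ factors $V_i = \bar V_i D_i P_i$, where $\bar V_i$ is the Vandermonde matrix on the rescaled nodes $\tilde u_{i,j} \in [-1,1]$, $D_i := \mathrm{diag}(c_i^t)_{t=0}^r$, and $P_i$ is the upper triangular Pascal matrix of the translation by $\bar k_i \in [-1,1]$; the operator norms of $P_i$ and $P_i^{-1}$ are bounded by constants depending only on $r$ since $|\bar k_i| \le 1$. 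Since the group spans at least $rs/m$ in $x$-scale, $c_i \ge c'_r\, N/m$. Combining the determinant identity $\det(V_i) = \prod_{j<j'}(k_{i+j's}-k_{i+js})/m \ge (s/m)^{r(r+1)/2}\prod_{j<j'}(j'-j)$ with the trace bound $\Tr(V_i^T V_i) \le r+1$ and the factorization above yields $\lambda_{\min}(V_i^T V_i) \ge c_r (N/m)^{2r}$. Summing over the $s \asymp N/(r+1)$ groups gives $\lambda_{\min}(\bU^T\bU) \ge c_r\, N (N/m)^{2r}$.

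The main obstacle is securing the correct exponent $2r$ in the per-group bound: a naive determinant-over-trace estimate $\lambda_{\min}(V_i^T V_i) \ge \det(V_i^T V_i)/\Tr^r$ yields only $(N/m)^{r(r+1)}$, which is strictly weaker than the target $(N/m)^{2r}$ for $r \ge 2$. To recover the correct scaling one must exploit the factorization $V_i = \bar V_i D_i P_i$ explicitly: the diagonal scaling $D_i$ contributes a factor of $c_i^{2r}$ to $\lambda_{\min}(V_i^T V_i)$, and this factor precisely absorbs the degeneracy of $\bar V_i$ that arises when the rescaled interior nodes cluster near $\pm 1$. Tracking constants --- all depending only on $r$ (and hence, via $r$'s role in \lemref{local_poly_var}, only on $d$ and the polynomial order) --- delivers the final numeric constant $C$ in the statement.
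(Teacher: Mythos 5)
Your global skeleton is exactly the paper's: sort $K$, interleave it into $\asymp |K|/(r+1)$ disjoint groups of $r+1$ nodes whose consecutive members are separated by at least $s\asymp|K|$, and use superadditivity of $\lambda_{\rm min}$ over the sum $\bU^T\bU\succeq\sum_i V_i^TV_i$. The divergence --- and the gap --- is in the per-group estimate. The paper obtains $\lambda_{\rm min}(V_i^TV_i)\ge(\delta/2)^{2r}/(r+1)$, with $\delta:=s/m$, from Gautschi's exact formula $\|V_i^{-1}\|_\infty=\max_i\prod_{j\ne i}\frac{1+|x_j|}{|x_j-x_i|}$, a product of only $r$ factors, each at most $2/\delta$. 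Your route (determinant over trace, filtered through the factorization $V_i=\bar V_iD_iP_i$) cannot deliver the exponent $2r$ in general: $\det(\bar V_i)$ is a product over all $\binom{r+1}{2}$ pairs of rescaled nodes, and the scaling $c_i^{2r}$ only compensates when \emph{all} nodes of the group cluster together. Take $K=\{-m\}\cup\{1,2,\dots,N-1\}$ with $N\ll m$: then $G_1=\{-m,s,2s,\dots,rs\}$ rescales to $u_0=-1$ with $u_1,\dots,u_r$ packed near $+1$ at pairwise gaps $\asymp\delta$. Here the half-spread is $c_i\asymp 1$, so $D_i$ contributes nothing, while $\det(\bar V_i)^2\asymp\delta^{r(r-1)}$ (the $\binom{r}{2}$ intra-cluster gaps multiply together), so determinant-over-trace yields only $\lambda_{\rm min}(V_i^TV_i)\gtrsim\delta^{r(r-1)}$ --- strictly weaker than the required $\delta^{2r}$ as soon as $r\ge 4$. (The true order in this configuration is $\delta^{2(r-1)}$; the determinant misses it because the smallest singular value only ``sees'' the $r$ distances from one node to the others, not all $\binom{r+1}{2}$ of them.) So the step you yourself flag as the main obstacle is precisely where the argument breaks.

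The fix is to use a per-group bound that depends on the nodes only through those $r$ distances: either quote Gautschi's bound on the Vandermonde inverse, $\|V_i^{-1}\|_\infty\le(2/\delta)^r$, hence $\lambda_{\rm min}(V_i^TV_i)=\|V_i^{-1}\|_2^{-2}\ge\delta^{2r}/(4^r(r+1))$ (this is what the paper does), or equivalently bound the coefficient vectors of the Lagrange basis polynomials $\ell_i(x)=\prod_{j\ne i}(x-x_j)/(x_i-x_j)$, which form the rows of $V_i^{-1}$. With that replacement the rest of your argument goes through verbatim. Two minor points: $\Tr(V_i^TV_i)\le(r+1)^2$, not $r+1$; and the bound $c_i\gtrsim N/m$ on the half-spread uses $r\ge1$ (the case $r=0$ being trivial since then $\bU^T\bU=|K|$).
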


\begin{proof}
  Let $k_1 < \cdots < k_q$ be the elements of $K$.  Define $\ell_0 =
  [q/(r+2)]$ and for $\ell =1, \dots, \ell_0-1$, let $K_\ell =
  \{k_{\ell}, k_{\ell+\ell_0}, \dots, k_{\ell+(r+1)\ell_0}\}$.  Note
  that $|K_\ell| = r+1$ and $k_{\ell+ (j+1) \ell_0} - k_{\ell+ j
    \ell_0} \ge \ell_0$.  Now, the matrix $\bU_\ell = ((k/m)^s: 0 \le
  s \le r; k \in K_\ell)$ is a Vandermonde $(r+1) \times (r+1)$
  matrix.  It is well-known that $\bU_\ell$ is invertible, and more
  precisely, the main result in~\cite{Gautschi62} says that

\[
\|\bU_\ell^{-1}\|_\infty = \max_{1 \le i \le r+1} \prod_{j \in \{1, \dots, r+1\} \setminus \{i\}}  \frac{1 + |k_{\ell + j \ell_0}|/m}{|k_{\ell + j \ell_0}/m - k_{\ell + i \ell_0}/m|},
\]
where $\| (a_{ij}) \|_\infty := \max_i \sum_j |a_{ij}|$.  Hence, 
\[
\|\bU_\ell^{-1}\|_2 \le \sqrt{r+1} \|\bU_\ell^{-1}\|_\infty \le \sqrt{r+1}  (2m/\ell_0)^r,
\]
where $\| \cdot \|_2$ is the usual Euclidean operator norm.  Hence,
\[
\lambda_{\rm min}(\bU_\ell^T \bU_\ell) \ge \|\bU_\ell^{-1}\|_2^{-2} \ge (\ell_0/(2m))^{2r}/(r+1).
\]
Since the index sets $K_\ell$ do not overlap, we have
\[
\lambda_{\rm min}(\bU^T \bU) \ge \sum_{\ell = 1}^{\ell_0} \lambda_{\rm min}(\bU_\ell^T \bU_\ell).
\]
When $r$ is fixed, $\ell_0 \asymp q$, so the RHS $\asymp q (q/m)^{2r}$.  
\end{proof}

Let $C_1$ denote the constant of \lemref{vandermonde} and let $C_2 =
C_1 (2C+1)^{2r+1}$.  Applying this result under the assumption that $[na]/(2C+1)\ge r+1$, 
we find that $\lambda_{\rm min}(\bZ_{(j')}^T \bZ_{(j')}) \ge [na]/C_2$
for all $j' \in J'$. From here, we have
\[
\lambda_{\rm min}(\bZ_0^T \bZ_0) \ge (\# J') [na]/C_2 \ge \frac{[na]^d}{C_2 (2C+2)}, 
\]
and then
\[
\lambda_{\rm min}(\bZ^T \bZ) \ge (h/a)^d \lambda_{\rm min}(\bZ_0^T \bZ_0) \asymp (nh)^d.
\]
With this established, the bias behaves as in the cartoon model, and
the rest of the analysis for MO and YF is exactly as before.

For NLM, some additional arguments are required.  We need
to compare $\patch_i$ with other patches centered at $x_j \in
B(x_i, h)$.  First, suppose that $x_j - x_i \in a \bbZ^d$.  Then, by
the periodicity of $\Omega$, $x_j \in \Omega$ too, and also $x_k + x_i
\in \Omega$ if, and only if, $x_k + x_j \in \Omega$, for all $x_k \in
\patch_0$.  Hence, \beqn
\|\bbfpj - \bbfpi\|_2^2 &=& \sum_{x_k \in \patch_0 \cap \Omega} (\fin(x_k + x_j) - \fin(x_k + x_i))^2 \\
&& + \sum_{x_k \in \patch_0 \cap \Omega^c} (\fout(x_k + x_j) - \fout(x_k + x_i))^2 \\
&\le& \mp \cf^2 \|x_i - x_j\|^2 \leq \mp \cf^2 h^2.  \eeqn This is the
equivalent of \eqref{f_ub}.

Suppose now that $x_j \in \Omega^c$.  Using the fact that $\fin$ and $\fout$ are $\cf$-Lipschitz, we have
\[
\bbfpi = \fin(x_i) \1 (\patch_i \cap \Omega) + \fout(x_i) \1 (\patch_i \cap \Omega^c) + O(h),
\]
and similarly, 
\[
\bbfpj = \fin(x_i) \1 (\patch_j \cap \Omega) + \fout(x_i) \1 (\patch_j \cap \Omega^c) + O(h),
\]
since $\fin(x_i) - \fin(x_j) = O(h)$ and $\fout(x_i) - \fout(x_j) = O(h)$.
Hence,
\beqn
\|\bbfpj - \bbfpi\|_2^2 &\ge& (\fout(x_i) - \fin(x_i))^2 \\
&& \times  \|\1 (\patch_j \cap \Omega) - \1 (\patch_i \cap
  \Omega)\|_2^2 + O(\mp h^2)\\
&\ge& \mu^2 \mp/C' + O(\mp h^2)  
\eeqn  
by \eqref{pattern-NLM}.  This is the equivalent of
\eqref{f_lb}.

Arguing as the in the proof of \thmref{NLM}, we see that, with high
probability, the regression neighborhood of $x_i$ includes all $x_j$
such that $x_j - x_i \in a \bbZ^d$, $x_j \in B(x_i,h)$ and $\patch_j
\cap \patch_i = \emptyset$ --- those $x_j$'s are in $\Omega$ like
$x_i$ --- and excludes all $x_j \in \Omega^c$ such that $\patch_j \cap
\patch_i = \emptyset$.  There are of order $(na)^d$ such points. 
Using the same techniques as before, this leads to a bound on the
variance of order $(na)^d/(nh)^d$. The trade-off with the bias
for a choice of bandwidth $h^{\rm MO}$ leads to the $(na)^d \, \cR^{\rm MO}$
upper bound in the proposition.

In principle, an additional argument would be
needed to exclude those $x_j \in \Omega^c$ such that $\patch_j \cap
\patch_i \neq \emptyset$, since in that case $\|\bepj - \bepi\|_2^2$
is not chi-square as before.  However, it is not hard to see that even
if these are included in the regression neighborhood, it does not
change things much since their number is small --- of order $O(\log
n)$.

\section*{Acknowledgements}
We would like to thanks Alexandre Tsybakov for pointing out some
helpful references and Micha\"el Chichignoud for fruitful comments
that helped improving this work.

\bibliographystyle{spmpsci}
\bibliography{references_all}

\end{document}